\documentclass[12pt]{amsart}
\usepackage{lineno,hyperref}
\modulolinenumbers[5]
\usepackage{comment}
\usepackage[english]{babel}
\usepackage[utf8]{inputenc}
\usepackage{amsmath}
\usepackage{shuffle}
 \usepackage{graphicx}
\usepackage[colorinlistoftodos]{todonotes}
\usepackage{amsthm}
\usepackage{amsfonts}
\usepackage{upgreek}
\usepackage[ruled,vlined]{algorithm2e}
\usepackage{float}
\usepackage{color}
\usepackage{xcolor}
\usepackage[all]{xy}
\usepackage{subfig}
\usepackage{tabularx}
\usepackage{amssymb}
\usepackage{resizegather}
\usepackage{color}
\usepackage{amsmath}
\usepackage{tikz-cd}
\usepackage{qtree}
\usepackage{graphicx}
\usepackage{url}

\usetikzlibrary{matrix, calc, arrows}
\usepackage{tikz}
\usetikzlibrary{positioning}
\usepackage{tkz-euclide}
\usepackage[foot]{amsaddr}
\usepackage{fullpage}
\usepackage{array}
\usepackage{mathabx}

\usepackage{adjustbox}
\newtheorem{theorem}{Theorem}[section]
\newtheorem*{theorem*}{Theorem}
\newtheorem{lemma}[theorem]{Lemma}
\newtheorem*{lemma*}{Lemma}

\newtheorem{corollary}[theorem]{Corollary}

\newtheorem*{corollary*}{Corollary}

\theoremstyle{definition}
\newtheorem{definition}[theorem]{Definition}
\newtheorem{example}[theorem]{Example}
\newtheorem{remark}[theorem]{Remark}

\newtheorem*{exercise*}{Exercise}

\linespread{1.3}

\usepackage{mathtools}
\def\multiset#1#2{\ensuremath{\left(\kern-.3em\left(\genfrac{}{}{0pt}{}{#1}{#2}\right)\kern-.3em\right)}}

\newcommand{\chain}[1][n]{\langle #1\rangle}

\newcommand{\nsomega}[2][n]{\Omega(#2,#1)}
\newcommand{\somega}[2][n]{\Omega^{\circ}(#2,#1)}

\newcommand{\zetaf}[1][X]{\mathfrak{Z}\left(#1\right)}
\newcommand{\zetafp}[1][X]{\mathfrak{Z}^+\left(#1\right)}

\newcommand{\zetafs}[1][X]{\mathfrak{Z}^{s}\left(#1\right)}

\usetikzlibrary{decorations.markings}
\tikzstyle{vertex}=[circle, draw, inner sep=0pt, minimum size=6pt]

\title{Shuffle Series}

    \author[1]{{Khushdil} {Ahmad$^1$}
    }
    
\author[2]{{Eric R}
 {Dolores-Cuenca}$^2{}^4*$}




\author[3]{{Khurram} {Shabbir$^3$}}

\email{$\ast$eric.rubiel@pusan.ac.kr}
\address{$^1{}^4${
{Department of Mathematics}, {Government College University}, {Lahore}, {Pakistan}}}
\address{$^2${Industrial Mathematics Center}, {Pusan National University}, { {Busan}, {Korea}}}\address{$^3${Department of Mathematics}, {Yonsei University}, { {Seoul}, {Korea}}}
\begin{document}

\begin{abstract} 
We apply operad theory to enumerative combinatorics in order to count the number of shuffles between series-parallel posets and chains. 
We work with three types of shuffles, two of them noncommutative, for example a left deck-divider shuffle $A$ between $P$ and $Q$ is a shuffle of the posets in which, on every maximal chain $m\subset A$, the minimum and maximum elements belong to $P$ and no two consecutive points of $Q$ appear consecutively on $m$. The number of left deck-divider shuffles of $P$ and $Q$ differ from the number of left deck-divider shuffles of $Q$ and $P$. 

The generating functions whose $n$ coefficient counts shuffles between a poset $P$ and $1<2<\cdots<n$ are called shuffle series. We explain how shuffle series are isomorphic to order series as algebras over the operad of series parallel posets.

The weak and strict order polynomials are well known in the literature. At the level of generating series, with the theory of sets with a negative number of elements, we introduce a third order series and prove a theorem in the style of Stanley's Reciprocity Theorem compatible with the structure of algebras over the operad of finite posets. 

We conclude by describing the relationship of our work with the combinatorial properties of the operadic tensor product of free trees operads.
\end{abstract}

\maketitle
\section{Introduction}

Given two decks, one with $n$ cards $A_1<\cdots<A_n$ and one with $m$ cards $B_{1}<\cdots<B_{m}$, the deck obtained from giving a linear order $<_{Sh}$ to $\{A_1,\cdots,A_{n}\}\cup\{B_1,\cdots,B_{m}\},$  is called a $(n, m)$-shuffle if the image of the original decks preserve their own order $A_1<_{Sh}\cdots<_{Sh}A_{n},$ and $ B_{1}<_{Sh}\cdots<_{Sh}B_{m}$.
The shuffle of decks appears classically in probability and combinatorics, and it is used to define a shuffle product of multizeta values in number theory~\cite{ EulerSh, shuffle}. Recent work on operad theory~\cite{SandDHT, Tensor} motivates an extension of shuffles of decks to shuffles of other partially ordered sets (posets). We will provide a precise definition of a shuffle between a poset and a $n$-chain $\chain[n]=1<2<\cdots<n$ in Section~\ref{Sec:First}.

A finite poset is a set with a partial order. We can also describe a poset as a finite set with a particular Alexandrov topology, then continuous functions coincide with order preserving functions.
By a series-parallel posets (${SP}$-poset), we mean a finite poset generated by a point under the operations of ordinal sum and disjoint union. Given a ${SP}$-poset $P$, we study the generating series $\sum_{i=0}^\infty a_ix^i$ whose $n$ coefficient $a_n$ is the number of shuffles between $P$ and $\chain[n]$. We call these generating series shuffle series. We denote the shuffle series of $P$ by $\mathcal{SH}(P)$, and we denote $\mathcal{SH}(i)=\mathcal{SH}(\chain[i])$. The shuffle series of a point is $\mathcal{SH}(1)=\frac{1}{(1-x)^2}$. 

This paper is part of a larger project, developing the theory of algebras over the operad of finite posets. The language of operads allows us to study customized operations and the sets in which those operations are defined. The paper~\cite{OpTre}  foresaw transparency as one of the advantages of understanding the underlying operadic structure of classical objects in combinatorics. As in the case of~\cite{Monops} in which posets and operads provide a combinatorial explanation of the inverse of Riordan matrices.
A symmetric topological operad $\mathcal{O}$ consists of topological spaces of $n$-ary operations $\{\mathcal{O}(n)\}_{n\in\mathbb{N}}$, with unital associative continuous composition rules and actions of the group of permutations. An algebra over the operad $\mathcal{O}$, an $\mathcal{O}$-algebra for short, is a set in which we realize the elements of $\mathcal{O}$ as associative operations. 

\begin{definition}
    Let $P=(\{x_i\}_{1\leq i\leq n},\leq_P)$ be a finite poset with $|P|=n$, and consider finite posets $\{Q_i\}_{1\leq i\leq n}$. The lexicographic sum of posets~\cite{Doppelgangers2,osets,order}  $\widetilde{P}(Q_1,\cdots,Q_n)$ is the poset with points $\sqcup_i\{q\in Q_i\}$ with the order:
$$a\leq_{P(Q_1,\cdots,Q_n)} b \hbox{ if } \begin{cases}
a\leq_{Q_i}b, &\hbox{for some } i, \\
a\in Q_i, b\in Q_j &\hbox{ and } x_i\leq_P x_j.
\end{cases}$$\label{LS}
\end{definition}

The first example of a topological operad is given by the operad of finite posets $\mathcal{FP}$ with the lexicographic sum as operadic composition. 
The automorphisms of the operad $\mathcal{FP}$ are of topological nature, we will expand on this point in the paper. We use different notation to help the reader distinguish when $P$ is a poset or when the same poset $\widetilde{P}$ is defining an $n$-ary operation, $ n=|P|$. 

Series parallel posets form a suboperad $\mathcal{SP} \subset \mathcal{FP}$. As a first introduction, we consider the generators of $\mathcal{SP}$ as the operations $\widetilde{\{c<d\}}$ and $\widetilde{\{c, d\}}$. 
We show that the set of shuffle series $\{\mathcal{SH}(P)\}_{P\in SP-\hbox{posets}}$ possess the structure of an algebra over the operad $\mathcal{SP}$.
We use this algebraic structure to systematically solve enumeration problems of posets.

Connecting two fields, enumerative combinatorics, and operad theory, implies that our main results either describe the hidden structure of a set or they provide a combinatorial interpretation of the objects under study.

Our first result from the structural point of view explains how to compute the action of  $\widetilde{P}\in \mathcal{SP}(n)$, on  $(\mathcal{SH}(1),\cdots,\mathcal{SH}(1))$ (assuming $n$ entries). We define a factorization of a poset $P$ to be a tree with indecomposable posets as internal vertices, where the grafting of trees acts as the lexicographic sum, and after applying the lexicographic sum indicated by the tree the result should be $P$. Fix a factorization of $P$.

\begin{itemize}
\item If the action of the indecomposable posets are endomoprhisms of $\mathbb{Z}[\{\mathcal{SH}(i)\}_{i\in\mathbb{N}}]$.

\item We compute the series solving the enumerating problem for a point $\mathcal{SH}(1)=\frac{1}{(1-x)^2}$.
\item  Assuming that the leaves of the tree contain drops of water, each drop is a copy of $\mathcal{SH}(1)$. As the drops fall along the tree, we apply the corresponding operation until the drops reach the root.   
\item  The value at the root is $\widetilde{P}(\mathcal{SH}(1),\cdots,\mathcal{SH}(1))$. 
\end{itemize}

The corresponding combinatorial theorem explains that the $k$ coefficient of the series constructed counts shuffles between $P$ and $\chain[k]$. In other words, the shuffle series $\mathcal{SH}(P)$ is the series $\widetilde{P}(\mathcal{SH}(1),\cdots,\mathcal{SH}(1))$. We introduce two conditions in Lemma~\ref{Lemma:conditions} to formalize and generalize the previous strategy. 

Our results imply that shuffle series of $SP$-posets are the $\mathcal{SP}$-algebra generated by $\frac{1}{(1-x)^2}$, in which the action of $\widetilde{\{c, d\}}$ on generating series $f,g$ is the Hadamard product of series,
and the action of $\widetilde{\{c<d\}}$ is the deformed product $f,g\mapsto f(1-x)g$. 

A map $f$ is called strictly order-preserving if $x<y$ implies $f(x)<f(y)$, and the map is called weakly order-preserving if $x\leq y$ implies $f(x)\leq f(y)$. There is a second algebra 
over the operad $\mathcal{SP}$ called weak order series. The $n$ coefficient of $\zetafp[P]$ the weak order series of a poset $P$, counts the number of weak order-preserving maps from $P$ to $\chain[n]$, and the weak order series of a point is $\zetafp[1]=\frac{x}{(1-x)^2}$.  Order series of $SP$-posets are studied in~\cite{posets} as an algebra over the operad $\mathcal{SP}$, and it is proved that for a fixed poset $P$ to know the number of strict-order preserving maps to every chain is equivalent as to know the number of weak-order preserving maps to every chain. A shuffle between two chains $\chain[n]$ and $\chain[m]$ is the same as an strict order-preserving map from $\chain[n]$ to $\chain[n+m]$, from this intuition we prove that the operadic map $\frac{1}{(1-x)^2}\rightarrow \frac{x}{(1-x)^2}$ induces an isomorphism of algebras over the operad $\mathcal{SP}$ between shuffle series and weak order series. 

We now work with graphs, specifically with reduced trees. A reduced tree $T$ is the result of pruning all the leaves of a tree and then putting exactly one leaf at each top vertex. When we restrict our work to posets that can be realized as the poset of vertices of a reduced tree, we answer the combinatorial question requested in~\cite[Page 64]{ShuffleTree}: What is the number of shuffles between a Tree and a linear Tree? We learned about this problem in the first of the 2023 Haifa distinguished lecture series by Ieke Moerdijk, at Utrecht University.
While the original problem is established on the level of polynomials, we believe the change of variable obtained by working on generating series is more algebraically friendly due to the fact that the Cauchy product of generating functions encodes a cumbersome product of polynomials. 

Once solved the question that motivated the project, Section~\ref{Sec:Second} applies enumerative combinatorics theory to understand the shuffle of posets. The order polynomial and the weak order polynomial are classic topics in enumerative combinatorics~\cite{EC}. At the level of generating functions we introduce a third order series using the theory of sets with negative number of elements. As a consequence, we construct three generalizations of shuffles on linear orders to shuffles of posets: colimit-indexing shuffles, right deck-divider shuffles and left deck-divider shuffles.

The paper is organized as follows. Section~\ref{Sec:First} explains how to compute shuffle series of series-parallel posets. 
The Theorem~\ref{Thm:equality} gives an explicit description of the action of the operad $\mathcal{SP}$ on the shuffle series. In Section~\ref{TS} we explain how to compute the number of shuffles of trees with linear trees.

The Section~\ref{Sec:Second} applies ideas from Loeb and Ehrhart to the study of shuffles. Firstly, Lemma~\ref{Lemma:CM} and Corollary~\ref{Cor:basis} are analogues to Stanley Reciprocity Theorem, they introduce a third order series derived from weak order series and provide a combinatorial meaning to the series. 
Then, we explain in Theorem~\ref{main:thrm} how the enumeration of shuffles is equivalent to the enumeration of order-preserving maps. We study three families of shuffles, defined in Lemma~\ref{lemma:3sh}.
Finally, in Remark~\ref{RM:vectors} we list all the interpretations of the vectors that we compute, including those related to the combinatorics of the tensor product of tree operads.

In the Section~\ref{Sec:Third}, we indicate open problems.

\section{Shuffle series}
\label{Sec:First}
We work with finite posets. A $n$-chain is the poset $\chain[n]=\{1<2<\cdots<n\}$. The disjoint union of posets $P, Q$ is denoted by $\widetilde{\{c,d\}}(P, Q)$, it is obtained by the union of the underlying points, and no new order relations besides the original ones. The ordinal sum of posets $P$ and $Q$  $\widetilde{\{c<d\}}(P, Q)$ adds the following relations to the union of the posets: we require the 
maximum elements of $P$ to be smaller than the minimum elements of $Q$. For example, the ordinal sum of two chains is a chain $\widetilde{\{c<d\}}(\chain[k],\chain[l])=\chain[k+l]$. The Hasse diagram of a poset is a graph with nodes the points of the poset, and the edges occur between an element and its successor. If $y$ is a successor of $x$ then we draw a vertical edge from the point $x$ located at the bottom to the point $y$ located at the top. For example, Figure~\ref{fig:Ex1} shows the Hasse diagram of $\widetilde{\{c<d\}}(\widetilde{\{c,d\}}(\chain[1],\chain[1]),\widetilde{\{c,d\}}(\chain[1],\chain[1]))=\{r<s,r<t, q<s,q<t\}$.

\begin{figure}
    \centering
    \begin{tikzpicture}[scale=0.2]
\tikzstyle{every node}+=[inner sep=0pt]
\draw [black] (5,-23.8) circle (3);
\draw [black] (5,-4.2) circle (3);
\draw [black] (15.7,-23.8) circle (3);
\draw [black] (15.7,-4.2) circle (3);
\draw [black] (15.7,-20.8) -- (15.7,-7.2);
\fill [black] (15.7,-7.2) -- (15.2,-8) -- (16.2,-8);
\draw [black] (5,-20.8) -- (5,-7.2);
\fill [black] (5,-7.2) -- (4.5,-8) -- (5.5,-8);
\draw [black] (14.26,-21.17) -- (6.44,-6.83);
\fill [black] (6.44,-6.83) -- (6.38,-7.77) -- (7.26,-7.3);
\draw [black] (6.44,-21.17) -- (14.26,-6.83);
\fill [black] (14.26,-6.83) -- (13.44,-7.3) -- (14.32,-7.77);
\end{tikzpicture}
    \caption{The Hasse diagram of the poset $\{r<s,r<t, q<s,q<t\}$.}
    \label{fig:Ex1}
\end{figure}
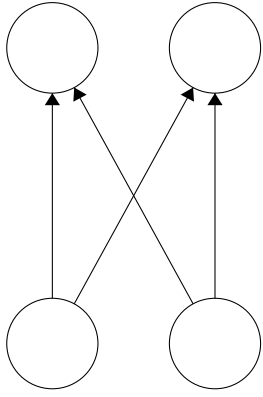
A series-parallel poset (${SP}$-Poset) is a poset generated by the operations of disjoint union and ordinal sum applied on chains.
A binary tree is a finite acyclic graph, whose external edges are connected to one vertex only, every vertex has two incoming edges and one outgoing edge, and one external edge is called the root while the other external edges are called leaves.

\begin{definition}
Let $P$ be a $SP$-poset with $|P|=n$. A factorization $\widetilde{P}$ of $P$ is a binary tree with every vertex labeled by $\widetilde{\{c<d\}}$ or $\widetilde{\{c, d\}}$.

To understand the evaluation of $\widetilde{P}$ on the vector with $n$ coordinates $(\chain[1],\cdots,\chain[1])$
we imagine each $\chain[1]$ as a drop of water, one on each leaf. As gravity pulls the drops along the branches of the tree, at the branching points we apply the operation that labels the vertex. We proceed until we reach the root, which should coincide with $P$.
\end{definition}

In computer science, there is a method to obtain the factorization of a poset. First, one computes the Reverse Polish notation (RPN)~\cite{RPN} of the poset. Then, the corresponding abstract syntax tree is the factorization of $P$.

For example, the poset $\{r<s,r<t, q<s,q<t\}$ has Hasse diagram in Figure~\ref{fig:Ex1}. It follows that one RPN associated with the poset is \[r\, q\, \{c,d\}\, s\, t\, \{c,d\}\, \{c<d\}.\]

\begin{lemma}\label{SP-N} The poset $P$ is a ${SP}$-poset if and only if the poset does not contain the $N$ poset: $ \{m<l>n<k\}$\end{lemma}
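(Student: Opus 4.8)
The plan is to prove both implications by structural induction, exploiting the two graphs attached to a poset $P$: its comparability graph $G$ (edges $=$ comparable pairs) and its incomparability graph $\bar{G}$ (edges $=$ incomparable pairs). The starting observation is that for the forbidden poset $N=\{m<l>n<k\}$ \emph{both} graphs are connected: the comparabilities $m<l$, $n<l$, $n<k$ form a path $m-l-n-k$, while the three incomparable pairs $\{m,n\},\{m,k\},\{l,k\}$ form a path $n-m-k-l$.

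For the direction $SP \Rightarrow N$-free I would induct on a factorization of $P$. A single point is trivially $N$-free. If $P=\widetilde{\{c,d\}}(A,B)$ is a disjoint union, then $G$ has no edges between $A$ and $B$, so any induced copy of $N$---whose comparability graph is connected---must lie entirely inside $A$ or inside $B$, contradicting the inductive hypothesis. Dually, if $P=\widetilde{\{c<d\}}(A,B)$ is an ordinal sum, then every cross pair is comparable, so $\bar{G}$ has no edges between $A$ and $B$, and since $N$ has connected incomparability graph an induced $N$ again lies in a single factor. Hence no $SP$-poset contains $N$.

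For the converse ($N$-free $\Rightarrow SP$) I would use strong induction on $|P|$, splitting into three cases according to the connectivity of $G$ and $\bar{G}$. If $G$ is disconnected, its components are pairwise incomparable, so $P$ is the disjoint union of the (smaller, still $N$-free) subposets they span, and induction finishes. If $\bar{G}$ is disconnected with components $B_1,\dots,B_t$, then all cross-component pairs are comparable, and I claim the blocks can be linearly ordered so that $P$ is their ordinal sum. The key sub-step is a propagation argument: if $x<y$ with $x\in B_i$, $y\in B_j$ and $x\|x'$ inside $B_i$, then $x'<y$ (otherwise $y<x'$ would force $x<x'$), and walking along incomparability paths inside each component upgrades ``some element of $B_i$ below some element of $B_j$'' to ``all of $B_i$ below all of $B_j$.'' This makes the block relation a strict total order, exhibiting $P$ as an ordinal sum of two nonempty $N$-free pieces, and induction applies.

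The remaining case---$G$ and $\bar{G}$ both connected---cannot occur, and ruling it out is the crux lemma and the main obstacle: \emph{if both $G$ and $\bar{G}$ are connected and $|P|\ge 2$, then $P$ contains an induced $N$}. My plan for this is extremal. Among all comparable pairs choose $a<b$ minimizing the $\bar{G}$-distance $d(a,b)$; since comparable elements are non-adjacent in $\bar{G}$ one has $d\ge 2$, and a shortest-path argument shows $d=2$, producing a vertex $v$ incomparable to both $a$ and $b$ with $a<b$. This ``$a<b$ with a common incomparable neighbour $v$'' is three quarters of an $N$; the connectivity of $G$ then supplies a fourth vertex $w$ comparable to $v$, and a case analysis on how $w$ sits relative to $a$ and $b$ exhibits an induced $N$ on $\{a,b,v,w\}$. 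Organizing this final case analysis cleanly (and disposing of the degenerate positions of $w$) is where the real work lies; alternatively one can invoke the substitution/modular decomposition of Gallai, under which $N$ is the unique prime nonchain, but I prefer the self-contained extremal argument. Note that $N$ is self-dual, which halves the casework.
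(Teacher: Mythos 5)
The paper gives no argument for this lemma at all: its ``proof'' is a citation to the literature (the classical Valdes--Tarjan--Lawler characterization of series-parallel orders), so you are attempting strictly more than the paper does. Most of your attempt is sound. The forward direction (connectivity of both the comparability and incomparability graphs of $N$, plus structural induction on a factorization) is complete and correct. In the converse, the disconnected-$G$ case and the disconnected-$\bar{G}$ case, including the propagation argument showing that the incomparability components assemble into an ordinal sum, are also complete; and your extremal choice of a comparable pair minimizing $\bar{G}$-distance does correctly force $d=2$, producing $a<b$ with a $v$ incomparable to both.

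The genuine gap is the last step of your crux lemma: the claim that connectivity of $G$ supplies a $w$ comparable to $v$ such that $\{a,b,v,w\}$ induces an $N$ is false. If $w>v$, transitivity already rules out $w<a$ and $w<b$, and the surviving positions are: $w$ incomparable to both $a$ and $b$ (the four points induce two disjoint $2$-chains), $w$ above both $a$ and $b$ (triangle plus pendant edge in the comparability graph), or $w>a$ with $w$ incomparable to $b$ (only this one yields $N$); dually for $w<v$. The two degenerate positions cannot be ``disposed of'' locally, because \emph{all} elements comparable to $v$ may be degenerate: in the five-point poset on $\{a,b,v,p,y\}$ with relations $a<b$, $a<p$, $b<p$, $v<p$, $a<y$ and all other pairs incomparable, both $G$ and $\bar{G}$ are connected, $(a,b)$ is a legitimate minimizer with midpoint $v$, yet the only element comparable to $v$ is $p$, which lies above both $a$ and $b$; the induced $N$ in this poset sits on $\{a,y,p,v\}$, and $y$ is \emph{not} comparable to $v$. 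So as written the step fails, and the repair needs a real idea, not bookkeeping: either a more careful choice of the extremal configuration, or a continuation along paths in $G$. The cleanest route is to observe that an induced $N$ in a poset is the same as an induced $4$-path in the comparability graph (transitivity forces the orientations along an induced path to alternate, giving $x_1<x_2>x_3<x_4$ or its dual), reducing the crux lemma to the classical graph fact that a graph with both $G$ and $\bar{G}$ connected contains an induced $P_4$; that is proved, for instance, by taking a shortest $G$-path from $v$ to $b$ and analyzing its last three vertices together with $a$, or by citing Seinsche's theorem. Your fallback via modular decomposition is legitimate in spirit, but the statement ``$N$ is the unique prime nonchain'' is wrong --- there are prime posets of every size at least four; the fact you want is that every prime poset on at least four points contains $N$ (equivalently, $P_4$-free graphs are cographs).
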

\begin{proof}
    See~\cite{SPPoset}.
\end{proof}
We reformulate Lemma~\ref{SP-N} in the following Lemma:
\begin{lemma}In an ${SP}$-poset $P$, if $y\in Successor(x)$ then for every $w\in Antecessor(y)$ we obtain $Successor(w)=Successor(x)$.\label{Lemma:N}
\end{lemma}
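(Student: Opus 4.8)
The plan is to derive the statement from the $N$-free characterization of Lemma~\ref{SP-N}: since $P$ is an $SP$-poset it contains no induced copy of $N=\{m<l>n<k\}$, and I will show that any failure of the successor condition exhibits such an induced copy. Throughout I read "$b\in Successor(a)$" as "$b$ covers $a$" (equivalently $a\in Antecessor(b)$), and I will repeatedly invoke two elementary facts: two distinct elements covering a common element are incomparable, and two distinct elements covered by a common element are incomparable. Indeed, if $y,z\in Successor(x)$ with $y<z$ then $x<y<z$ contradicts $z$ covering $x$; the dual statement is symmetric.

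Because $x$ and any $w\in Antecessor(y)$ play symmetric roles as predecessors of $y$, it suffices to prove the one-sided claim that $z\in Successor(x)$ implies $z\in Successor(w)$; exchanging the roles of $x$ and $w$ then yields $Successor(x)=Successor(w)$. So fix $z\in Successor(x)$. If $z=y$ the conclusion is immediate, since $w\in Antecessor(y)=Antecessor(z)$. Assume $z\neq y$. By the elementary facts, $y$ and $z$ are incomparable (both cover $x$) and $x$ and $w$ are incomparable (both are covered by $y$). I then compare $w$ and $z$. The relation $z<w$ is impossible, as $x<z<w$ would force $x<w$, contradicting the incomparability of $x$ and $w$. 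If instead $w$ and $z$ were incomparable, then on the four distinct elements $\{w,x,y,z\}$ the only comparabilities are $x<y$, $x<z$, $w<y$, which is precisely an induced $N$ under $x\mapsto n$, $y\mapsto l$, $w\mapsto m$, $z\mapsto k$, contradicting Lemma~\ref{SP-N}. Hence $w<z$, and it remains only to upgrade this to a covering relation.

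The step I expect to be the main obstacle is exactly this last upgrade, since the "obvious" quadruple no longer forms an $N$ and one must select the right four elements. Suppose $z$ does not cover $w$; as $P$ is finite there is an element $u$ with $u$ covering $w$, $u<z$, and $u\neq z$. I will test the quadruple $\{x,u,y,z\}$. One checks that $u$ and $x$ are incomparable ($x<u$ would put $u$ strictly inside the cover $x<z$, while $u<x$ would give $w<u<x$ and hence $w<x$), and that $u$ and $y$ are incomparable (both cover $w$); combined with $x<y$, $x<z$, $u<z$, and the already-established incomparability of $y$ and $z$, the set $\{x,u,y,z\}$ is again an induced $N$, now under $x\mapsto n$, $z\mapsto l$, $u\mapsto m$, $y\mapsto k$. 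This contradiction shows $z$ covers $w$, i.e. $z\in Successor(w)$, which completes the argument. I will close by noting that the converse direction is immediate, so that this genuinely reformulates Lemma~\ref{SP-N}.
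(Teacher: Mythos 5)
Your proof is correct and takes essentially the same route as the paper: the paper's own proof is a one-line assertion that any violation of the successor condition produces an induced copy of $N$, contradicting Lemma~\ref{SP-N}, and your argument simply supplies the details of that assertion (one induced $N$ forcing $w<z$, and a second induced $N$ built with the auxiliary cover $u$ to upgrade $w<z$ to a covering relation). The only cosmetic omissions are the degenerate case $w=x$, where the conclusion is immediate, and the distinctness check $u\neq y$, which follows at once since $u=y$ would give $y<z$ and contradict the incomparability of $y$ and $z$; neither affects correctness.
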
\begin{proof}

If $P$ is an $SP$-poset, and $x,y\in P$  with $y\in Successor(x)$, and there is an $w\in Antecessor(y)$ with $Successor(w)\neq Successor(x)$, then one can find a $N$ poset in $P$, which is a contradiction according to Lemma~\ref{SP-N}.  
\end{proof}
From Lemma~\ref{Lemma:N}, when we cluster elements of the poset that share one successor, the elements of the cluster share all successors. This process can help us find a factorization of an ${SP}$-poset. After one iteration, elements of the cluster represent iterations of the operation $\widetilde{\{c,d\}}$ on points of the poset. The result is the first input of the operation $\widetilde{\{c<d\}}$. The second input is the union of those successors shared by the elements of the first input. A similar approach is applied if several elements have the same antecessor. After perhaps iterations of this process one can try to read the RPN.  The first iteration cluster posets. The second iteration clusters sets of sets of posets, and so on. In the case of the poset $\{r<s,r<t, q<s,q<t\}$
we obtain $\widetilde{\{c<d\}}\left(\widetilde{\{c,d\}}(r,q),\widetilde{\{c,d\}}(s,t)\right)$. The Hasse diagram is an alternative visual aid that shows how to group successors and group antecessors.
  \begin{figure}[htb]\centering
\begin{tikzpicture}[xscale=1.3,yscale=1.3]
 
\draw[blue,fill] (1.5,1.5) circle [radius=1pt]
node[right] {$\{c,d\}$};
\draw[blue,fill] (0.5,2.5) circle [radius=1pt]
node[right] {};
\draw[blue,fill] (2.5,2.5) circle [radius=1pt]
node[right] {};
\draw (.1,.1)  to  (1.4,1.4);
\draw (1.4,1.6)  to  (.6,2.4);
\draw (1.6,1.6) to  (2.4,2.4);

\draw[blue,fill] (0,0) circle [radius=1pt]
node[right] {$\{c<d\}$};
\draw[blue,fill] (0,-1) circle [radius=1pt];
\node[left] at (0.1,-0.5) {};

\draw (-1.4,1.4)  to  (-.1,0.1); 
\draw (0,-.1)  to  (0,-.9);
\draw[blue,fill] (-2.5,2.5) circle [radius=1pt]
node[right] {};
\draw[blue,fill] (-.5,2.5) circle [radius=1pt]
node[right] {};
\draw[blue,fill] (-1.5,1.5) circle [radius=1pt]
node[left] {$\{c,d\}$};
\node[left] at (-0.5,0.5) {};
\draw (-1.4,1.6)  to  (-.6,2.4);
\draw (-1.6,1.6) to  (-2.4,2.4);

\end{tikzpicture}
    \caption{A factorization for the poset $\{r<s,r<t, q<s,q<t\}$.\label{fig:fact}
}

\end{figure}
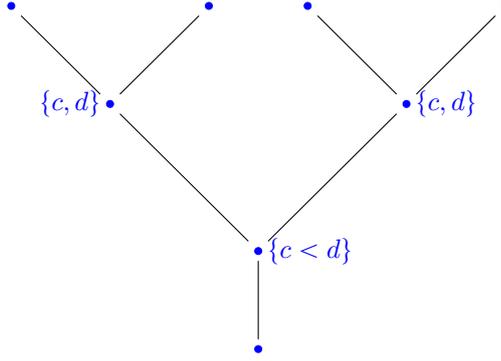

\begin{definition}
 Let $P$ be a $SP$-poset. The set $\hom_{Shuffle}(P,\chain[n])$ of (colimit-indexing) shuffle products of $P$ and $\chain[n]$ contain those posets $A$ satisfying:

\begin{itemize}
\item Every point in $A$ is labeled by a point in $P$ or a point in $\chain[n]$.
\item The number of minimum points of $A$ is the same as the number of minimum points in $P$.
\item The number of maximal points in $A$ is the same as the number of maximal points in $P$.

\item There is an isomorphism between maximal chains in $P$ and maximal chains in $A$: {For} any maximal chain $M$ in $P$, the corresponding chain in $A$ is a shuffle between the elements of $M$ and $\chain[n]$.
\end{itemize}   
\end{definition}

\begin{remark}
 When $P=\chain[k]$, the posets $\chain[k]$ and $\chain[n]$ have both one maximun, and one minimum. Then, the shuffles of those posets are the usual $(k,n)$-shuffles. 
\end{remark}
\begin{example}
The seven shuffles of $
\{a<c,a<d,b<c,b<d\}$ and the poset $\chain[1]$ are displayed in Figure~\ref{fig:7}.

\begin{center}
\begin{figure}

\begin{tikzpicture}[scale=0.17]
\tikzstyle{every node}+=[inner sep=0pt]
\draw [black] (5.1,-13.7) circle (3);
\draw [black] (5,-3.6) circle (3);
\draw [black] (13.9,-13.7) circle (3);
\draw [black] (13.9,-3.4) circle (3);
\draw [black] (5.1,-22.4) circle (3);
\draw [black] (5.1,-22.4) circle (2.4);
\draw [black] (13.9,-22.4) circle (3);
\draw [black] (13.9,-22.4) circle (2.4);
\draw [black] (60.3,-23.8) circle (3);
\draw [black] (60.3,-13.7) circle (3);
\draw [black] (70.9,-13.7) circle (3);
\draw [black] (70.9,-23.8) circle (3);
\draw [black] (60.3,-3.4) circle (3);
\draw [black] (60.3,-3.4) circle (2.4);
\draw [black] (70.9,-3.4) circle (3);
\draw [black] (70.9,-3.4) circle (2.4);
\draw [black] (24.8,-3.4) circle (3);
\draw [black] (44.9,-2.8) circle (3);
\draw [black] (24.8,-23.8) circle (3);
\draw [black] (44.9,-23.8) circle (3);
\draw [black] (44.9,-13.7) circle (3);
\draw [black] (44.9,-13.7) circle (2.4);
\draw [black] (38.7,-13.7) circle (3);
\draw [black] (38.7,-13.7) circle (2.4);
\draw [black] (30.9,-13.7) circle (3);
\draw [black] (30.9,-13.7) circle (2.4);
\draw [black] (24.8,-13.7) circle (3);
\draw [black] (24.8,-13.7) circle (2.4);
\draw [black] (7.05,-11.42) -- (11.95,-5.68);
\fill [black] (11.95,-5.68) -- (11.05,-5.96) -- (11.81,-6.61);
\draw [black] (13.9,-10.7) -- (13.9,-6.4);
\fill [black] (13.9,-6.4) -- (13.4,-7.2) -- (14.4,-7.2);
\draw [black] (11.92,-11.45) -- (6.98,-5.85);
\fill [black] (6.98,-5.85) -- (7.14,-6.78) -- (7.89,-6.12);
\draw [black] (5.1,-19.4) -- (5.1,-16.7);
\fill [black] (5.1,-16.7) -- (4.6,-17.5) -- (5.6,-17.5);
\draw [black] (13.9,-19.4) -- (13.9,-16.7);
\fill [black] (13.9,-16.7) -- (13.4,-17.5) -- (14.4,-17.5);
\draw [black] (60.3,-20.8) -- (60.3,-16.7);
\fill [black] (60.3,-16.7) -- (59.8,-17.5) -- (60.8,-17.5);
\draw [black] (62.47,-21.73) -- (68.73,-15.77);
\fill [black] (68.73,-15.77) -- (67.8,-15.96) -- (68.49,-16.68);
\draw [black] (70.9,-20.8) -- (70.9,-16.7);
\fill [black] (70.9,-16.7) -- (70.4,-17.5) -- (71.4,-17.5);
\draw [black] (68.73,-21.73) -- (62.47,-15.77);
\fill [black] (62.47,-15.77) -- (62.71,-16.68) -- (63.4,-15.96);
\draw [black] (60.3,-10.7) -- (60.3,-6.4);
\fill [black] (60.3,-6.4) -- (59.8,-7.2) -- (60.8,-7.2);
\draw [black] (70.9,-10.7) -- (70.9,-6.4);
\fill [black] (70.9,-6.4) -- (70.4,-7.2) -- (71.4,-7.2);
\draw [black] (27.23,-22.04) -- (36.27,-15.46);
\fill [black] (36.27,-15.46) -- (35.33,-15.53) -- (35.92,-16.34);
\draw [black] (24.8,-10.7) -- (24.8,-6.4);
\fill [black] (24.8,-6.4) -- (24.3,-7.2) -- (25.3,-7.2);
\draw [black] (44.9,-20.8) -- (44.9,-16.7);
\fill [black] (44.9,-16.7) -- (44.4,-17.5) -- (45.4,-17.5);
\draw [black] (44.9,-10.7) -- (44.9,-5.8);
\fill [black] (44.9,-5.8) -- (44.4,-6.6) -- (45.4,-6.6);
\draw [black] (42.47,-22.04) -- (33.33,-15.46);
\fill [black] (33.33,-15.46) -- (33.69,-16.33) -- (34.27,-15.52);
\draw [black] (29.37,-11.12) -- (26.33,-5.98);
\fill [black] (26.33,-5.98) -- (26.31,-6.92) -- (27.17,-6.41);
\draw [black] (5.07,-10.7) -- (5.03,-6.6);
\fill [black] (5.03,-6.6) -- (4.54,-7.4) -- (5.54,-7.39);
\draw [black] (24.8,-20.8) -- (24.8,-16.7);
\fill [black] (24.8,-16.7) -- (24.3,-17.5) -- (25.3,-17.5);
\draw [black] (40.18,-11.09) -- (43.42,-5.41);
\fill [black] (43.42,-5.41) -- (42.59,-5.86) -- (43.46,-6.35);
\end{tikzpicture}

\vspace{1cm}

\begin{tikzpicture}[scale=0.17]
\tikzstyle{every node}+=[inner sep=0pt]
\draw [black] (3.1,-11.4) circle (3);
\draw [black] (3.1,-28.2) circle (3);
\draw [black] (11.9,-11.4) circle (3);
\draw [black] (11.9,-28.2) circle (3);
\draw [black] (23,-11.4) circle (3);
\draw [black] (31.9,-11.4) circle (3);
\draw [black] (23,-28.2) circle (3);
\draw [black] (31.9,-28.2) circle (3);
\draw [black] (43.5,-11.4) circle (3);
\draw [black] (52.1,-11.4) circle (3);
\draw [black] (43.5,-28.2) circle (3);
\draw [black] (52.1,-28.2) circle (3);
\draw [black] (63.9,-11.4) circle (3);
\draw [black] (73.2,-11.4) circle (3);
\draw [black] (63.9,-28.2) circle (3);
\draw [black] (73.2,-28.2) circle (3);
\draw [black] (11.5,-39.9) circle (3);
\draw [black] (11.5,-39.9) circle (2.4);
\draw [black] (23,-39.9) circle (3);
\draw [black] (23,-39.9) circle (2.4);
\draw [black] (43.5,-3) circle (3);
\draw [black] (43.5,-3) circle (2.4);
\draw [black] (73.2,-3) circle (3);
\draw [black] (73.2,-3) circle (2.4);
\draw [black] (3.1,-19.8) circle (3);
\draw [black] (3.1,-19.8) circle (2.4);
\draw [black] (9.5,-19.8) circle (3);
\draw [black] (9.5,-19.8) circle (2.4);
\draw [black] (25.4,-19.8) circle (3);
\draw [black] (25.4,-19.8) circle (2.4);
\draw [black] (31.9,-19.8) circle (3);
\draw [black] (31.9,-19.8) circle (2.4);
\draw [black] (45.9,-19.8) circle (3);
\draw [black] (45.9,-19.8) circle (2.4);
\draw [black] (52.1,-19.8) circle (3);
\draw [black] (52.1,-19.8) circle (2.4);
\draw [black] (63.9,-19.8) circle (3);
\draw [black] (63.9,-19.8) circle (2.4);
\draw [black] (70.7,-19.8) circle (3);
\draw [black] (70.7,-19.8) circle (2.4);
\draw [black] (11.6,-36.9) -- (11.8,-31.2);
\fill [black] (11.8,-31.2) -- (11.27,-31.98) -- (12.27,-32.01);
\draw [black] (3.1,-25.2) -- (3.1,-22.8);
\fill [black] (3.1,-22.8) -- (2.6,-23.6) -- (3.6,-23.6);
\draw [black] (4.92,-25.81) -- (7.68,-22.19);
\fill [black] (7.68,-22.19) -- (6.8,-22.52) -- (7.59,-23.13);
\draw [black] (3.1,-16.8) -- (3.1,-14.4);
\fill [black] (3.1,-14.4) -- (2.6,-15.2) -- (3.6,-15.2);
\draw [black] (10.32,-16.92) -- (11.08,-14.28);
\fill [black] (11.08,-14.28) -- (10.38,-14.92) -- (11.34,-15.19);
\draw [black] (10.51,-25.54) -- (4.49,-14.06);
\fill [black] (4.49,-14.06) -- (4.42,-15) -- (5.31,-14.53);
\draw [black] (11.9,-25.2) -- (11.9,-14.4);
\fill [black] (11.9,-14.4) -- (11.4,-15.2) -- (12.4,-15.2);
\draw [black] (23,-36.9) -- (23,-31.2);
\fill [black] (23,-31.2) -- (22.5,-32) -- (23.5,-32);
\draw [black] (30.06,-25.83) -- (27.24,-22.17);
\fill [black] (27.24,-22.17) -- (27.33,-23.11) -- (28.12,-22.5);
\draw [black] (31.9,-25.2) -- (31.9,-22.8);
\fill [black] (31.9,-22.8) -- (31.4,-23.6) -- (32.4,-23.6);
\draw [black] (31.9,-16.8) -- (31.9,-14.4);
\fill [black] (31.9,-14.4) -- (31.4,-15.2) -- (32.4,-15.2);
\draw [black] (24.58,-16.92) -- (23.82,-14.28);
\fill [black] (23.82,-14.28) -- (23.56,-15.19) -- (24.52,-14.92);
\draw [black] (23,-25.2) -- (23,-14.4);
\fill [black] (23,-14.4) -- (22.5,-15.2) -- (23.5,-15.2);
\draw [black] (24.4,-25.55) -- (30.5,-14.05);
\fill [black] (30.5,-14.05) -- (29.68,-14.52) -- (30.56,-14.99);
\draw [black] (44.32,-25.32) -- (45.08,-22.68);
\fill [black] (45.08,-22.68) -- (44.38,-23.32) -- (45.34,-23.59);
\draw [black] (52.1,-25.2) -- (52.1,-22.8);
\fill [black] (52.1,-22.8) -- (51.6,-23.6) -- (52.6,-23.6);
\draw [black] (47.68,-17.39) -- (50.32,-13.81);
\fill [black] (50.32,-13.81) -- (49.44,-14.16) -- (50.25,-14.75);
\draw [black] (52.1,-16.8) -- (52.1,-14.4);
\fill [black] (52.1,-14.4) -- (51.6,-15.2) -- (52.6,-15.2);
\draw [black] (50.73,-25.53) -- (44.87,-14.07);
\fill [black] (44.87,-14.07) -- (44.79,-15.01) -- (45.68,-14.55);
\draw [black] (43.5,-25.2) -- (43.5,-14.4);
\fill [black] (43.5,-14.4) -- (43,-15.2) -- (44,-15.2);
\draw [black] (43.5,-8.4) -- (43.5,-6);
\fill [black] (43.5,-6) -- (43,-6.8) -- (44,-6.8);
\draw [black] (63.9,-25.2) -- (63.9,-22.8);
\fill [black] (63.9,-22.8) -- (63.4,-23.6) -- (64.4,-23.6);
\draw [black] (72.34,-25.32) -- (71.56,-22.68);
\fill [black] (71.56,-22.68) -- (71.3,-23.58) -- (72.26,-23.3);
\draw [black] (63.9,-16.8) -- (63.9,-14.4);
\fill [black] (63.9,-14.4) -- (63.4,-15.2) -- (64.4,-15.2);
\draw [black] (68.81,-17.47) -- (65.79,-13.73);
\fill [black] (65.79,-13.73) -- (65.9,-14.67) -- (66.68,-14.04);
\draw [black] (65.35,-25.58) -- (71.75,-14.02);
\fill [black] (71.75,-14.02) -- (70.92,-14.48) -- (71.8,-14.97);
\draw [black] (73.2,-25.2) -- (73.2,-14.4);
\fill [black] (73.2,-14.4) -- (72.7,-15.2) -- (73.7,-15.2);
\draw [black] (73.2,-8.4) -- (73.2,-6);
\fill [black] (73.2,-6) -- (72.7,-6.8) -- (73.7,-6.8);
\end{tikzpicture}

\begin{caption}\, Shuffles of $\{a<c,a<d,b<c,b<d\}$ and $\chain[1]$, the points labeled by the second poset are drawn with double circles.
\label{fig:7}
    \end{caption}
\end{figure}
\end{center}

\end{example}

\begin{definition}
Let $Q$ be a ${SP}$-poset, define the (colimit-indexing) shuffle series of $Q$ by $$\mathcal{SH}(Q)=\sum_{n=0}^\infty \#\hom_{Shuffle}(Q,\chain[n])x^n.$$    
\end{definition}
  Here, we assume that there is one shuffle between a poset and the empty set, the poset itself. For a chain, we denote $\mathcal{SH}(k):=\mathcal{SH}(\chain[k])$.

Denote by $\multiset{n}{k}$ the multiset coefficient $\multiset{n}{k}={n+k-1\choose k}$. Let $m\in\mathbb{N}$, then we compute 
\begin{align*}
    \mathcal{SH}(m)=&\sum_{n=0}^\infty \#\hom_{Shuffle}(\chain[m],\chain[n])x^n\\ =&\sum_{n=0}^\infty \multiset{n+1}{m}x^n\\ =&\frac{1}{(1-x)^{m+1}}.
\end{align*}

\subsection{Combinatorial definition of the action of the operad $\mathcal{SP}$ on series}

\label{sec:exp}

The factorizations of posets in terms of the lexicographic sum (see Definition~\ref{LS}) are studied in~\cite[Chapter 7.2]{osets}. A poset is indecomposable if the only factorizations are trivial $P=\widetilde{\chain[1]}(P)$ or $P=\widetilde{P}(\chain[1],\cdots,\chain[1])$. For example, the poset $\{w,x,y,z|x<y>w<z\}$ and the poset $\{x,y|x<y\}$ are indecomposable.
In general, the factorization of a poset is not unique.

We aim to study sets in which posets induce operations. To formalize this idea we use the language of operads.
We follow~\cite{OpTre, SandDHT} and we recommend~\cite{whatare,whatis} for an introduction.

\begin{definition}
 The operad of finite posets $\mathcal{FP}$ contains as $n$-ary operations posets with $n$-points, and the operadic composition is the lexicographic sum. 
\end{definition}

Elements of an operad are written with a tilde, to distinguish the poset $P=(\{x_1,\cdots,x_n\},\leq_P)$ and the $n$-ary operation $\widetilde{P}$.

The operadic composition is compatible with the action of poset automorphisms (order preserving isomorphisms). 

 To control the order of composition of the posets, we will use trees as a visual aid. 
 \begin{definition}
 We consider the $n-$ary operations of the operad of finite posets to be the set of trees (finite acyclic graphs with a chosen external edge called the root) with $n$ leaves in which vertices with $k$ children are labeled by an indecomposable poset with $k$ points. If a vertex is labeled by a poset $P$, then the outgoing edges of the vertex are labeled by the points of the poset $P$.  
  The composition is given by the grafting of trees, which identifies the root of a tree with the leaf of a second tree.
 \end{definition}
\begin{remark}
From now on, we associate the operation $\widetilde{P}$ without inputs, where $P$ is an indecomposable poset with $|P|=n$ points, to the $n$-corolla labeled by $P$. Next, for $Q_1,\cdots, Q_n$ posets, we consider them as operations, with their corresponding corollas $\widetilde{Q_1}, \cdots,\widetilde{Q_n}$. Then we define the tree $\widetilde{P}(\widetilde{Q_1}, \cdots, \widetilde{Q_n})$ as the grafting of the corollas $\widetilde{Q_1}, \cdots,\widetilde{Q_n}$ along the $n$-corolla $\widetilde{P}$, where the $i$ corolla $\widetilde{Q_i}$ is grafted on the $i$ outgoing edge of $\widetilde{P}$.
Similarly for more complex operations. In this paper we will always assume this assignment. 
\end{remark}

Our approach differs from~\cite{operadskoszul}, in their construction \textit{each} poset defines an associative operad.

In Figure~\ref{Fig:ternary operation} we see an example of a four-ary operation that is the result of the grafting of two binary operations $(\widetilde{\{c,d\}},\widetilde{\{c<d\}})$ on the binary operation $\widetilde{\{c<d\}}$. 
  \begin{figure}[htb]\centering
\begin{tikzpicture}[xscale=1.3,yscale=1.3]
 
\draw[blue,fill] (1.5,1.5) circle [radius=1pt]
node[right] {$\{c<d\}$};
\draw[white,fill] (0.5,2.5) circle [radius=1pt]
node[right] {};
\draw[white,fill] (2.5,2.5) circle [radius=1pt]
node[right] {};
\draw (.1,.1)  to  (1.4,1.4);
\draw (1.4,1.6)  to  (.6,2.4);
\draw (1.6,1.6) to  (2.4,2.4);

\draw[blue,fill] (0,0) circle [radius=1pt]
node[right] {$\{c<d\}$};
\draw[white,fill] (0,-1) circle [radius=1pt];
\node[left] at (0.1,-0.5) {};

\draw (-1.4,1.4)  to  (-.1,0.1); 
\draw (0,-.1)  to  (0,-.9);
\draw[white,fill] (-2.5,2.5) circle [radius=1pt]
node[right] {};
\draw[white] (-.5,2.5) circle [radius=1pt]
node[right] {};
\draw[blue,fill] (-1.5,1.5) circle [radius=1pt]
node[left] {$\{c,d\}$};
\node[left] at (-0.5,0.5) {};
\draw (-1.4,1.6)  to  (-.6,2.4);
\draw (-1.6,1.6) to  (-2.4,2.4);

\end{tikzpicture}
\caption{The four-ary operation $\widetilde{\{c<d\}}(\widetilde{\{c,d\}},\widetilde{\{c<d\}})$ evaluated in four points $(p,q,r,s)$ returns the poset $\{p,q,r,s|p<r,q<r,r<s\}.$\label{Fig:ternary operation}
}
\end{figure}

\begin{definition}
    To any set $X$  we associate the operad $End_X$ with $n$-ary operations $\hom_{Sets}(X^n,X)$. The set $X$ is an algebra over an operad $O$ if there exists a morphism of operads (a morphims compatible with the composition) $O\rightarrow End_X$.
    \end{definition}
    An abstract $n$-ary operation $\widetilde{F}\in \mathcal{O}(n)$ induces a function ${F}:X^n\rightarrow X$.

 \begin{remark}
     We proceed to describe explicitly the set of posets as an algebra over the operad $\mathcal{FP}$. Given a tree $\widetilde{P}$ where $P$ is a poset with $n$ points, and $Q_1,\cdots, Q_n$ posets, we visualize $\widetilde{P}(Q_1,\cdots, Q_n)$ as placing each poset $Q_i$ on its  corresponding leaf $i$, and thinking of them as drops of water. From top to bottom, as the drops of water reach an internal vertex, we apply the corresponding lexicographic sum and the new drop of water continues descending. 
     That is, if the vertex is labeled with the ( indecomposable ) poset $P_k$, with $|P_k|=l$ points, and the posets $T_1,\cdots,T_l$ correspond to the outgoing edges of the vertex $P_k$ then the output drop of water is the poset  $P_k(T_1,\cdots,T_l)$.
     Going from top to bottom, we reach the root, which should correspond to $P(Q_1,\cdots,Q_n)$.
 \end{remark}

Technically speaking, when we want to compute the action of the operad $\mathcal{FP}$ (or $\mathcal{SP}$) on a set, we need to choose a labeling for all the outgoing edges of vertices  ( including a labeling on all the leaves of the corresponding trees) which we use to indicate where is each input assigned.

For every finite poset, Stanley~\cite{beginning} defined  $\somega[n]{P}$ to be the number of strict poset maps between the poset $P$ and the $n$ chain. Note that if the poset $P$ is not empty, then there is no map into the empty set, then we declare $\somega[0]{P}=0$. 
\begin{definition}    
The strict order series are the generating functions of Stanley's strict order polynomials $\zetaf[P]=\sum_{n=1}^\infty \somega[n]{P}x^n$. \label{series:strict}
\end{definition}

We proceed to define the operad of series-parallel posets as a sub operad of the operad of finite posets.  While the structure of algebra over the operad of finite posets of strict order series is well understood (see \cite{APV}), we only undestand shuffle series as an algebra over the operad of series-parallel posets. For this reason we restrict most of our results to the operad of series parallel poset. In the second paragraph of Section~\ref{Sect6} we explain with more detail the obstructions encountered when defining the action of the operad of finite posets on shuffle series.

 \begin{definition}     
The operad of series-parallel posets $\mathcal{SP}$ is the suboperad of $\mathcal{FP}$ which contains as $n$-ary operations the set of binary trees with $n$ leaves in which vertices are labeled with $\widetilde{\{c,d\}}$ or  $\widetilde{\{c<d\}}$.  \end{definition}

\begin{remark}\label{rm:fact}
A factorization  of a ${SP}$-poset $P$, $|P|=n$, consist on finding a tree $\widetilde{Q}$ in $\mathcal{SP}(n)$ so that $\widetilde{Q}(\chain[1],\cdots,\chain[1])=P$.
\end{remark}

 For an ${SP}$-poset $P$, and any chain $\chain[n]$, assume that you can compute a set $\hom_C(P,\chain[n])$. Here we use $C$ to denote an operation. For example, we can consider $\hom_{Shuffle}(P,\chain[n]) $ the set of shuffles among the posets $P$ and $ \chain[n]$, or $\hom_{Poset}(P,\chain[n])$ the set of strict poset maps from $P$ to $\chain[n]$. 

We are interested in the cases in which the following properties hold:

\begin{itemize}
\item[$C_1$:] For every ${SP}$-poset $P$, the function $n\mapsto \#\hom_C(P,\chain[n])$ is a polynomial on $n$.

\item[$C_1$:]We can write $\#\hom_C\left(\widetilde{\{c,d\}}(\chain[j], \chain[k]),\chain[n]\right)=\sum_{i\in I} a_i \#\hom_C(\chain[i],\chain[n]), a_i\in\mathbb{Z}$, with $I$ a finite set of indices.
\end{itemize}

For every ${SP}$-poset $P$ consider the generating function of $\#\hom_C$ with second variable running over the chains:  $Series_C(P)=\sum_{n=0}^\infty \#\hom_C(P,\chain[n]) x^n$.

Posets $P$ and $Q$ are Doppelg\"angers~\cite{Doppelgangers2,Doppelgangers,  doppelgangers3} if $\sum_{n=0}^\infty \#\hom_{Poset}(P,\chain[n]) x^n$ coincides with $\sum_{n=0}^\infty \#\hom_{Poset}(Q,\chain[n]) x^n$. For example, the order series of $\{w<x,w<y,w<z\}$ and $\{w<x, y<z\}$ coincide.
Then, the function transforming a poset to a series is not necessarily injective. We remember which poset generated every series by specifying the poset on the notation $\mathcal{SH}(P), \zetaf[P]$, etc. Technically speaking our objects are pairs consisting of the series $\sum_{n=0}^\infty \#\hom_C(P,\chain[n]) x^n$ and the poset $P$. With this extra information, there is an isomorphism between the set of 
posets and the set of their associated series, the inverse map is $(\sum_{n=0}^\infty \#\hom_C(P,\chain[n]) x^n, P)\rightarrow P$.

The condition $C_1$ implies that the functions $n\mapsto\#\hom_C(P,n)$ are integer-valued polynomials on the variable $n$, and as polynomials, they are the sum of binomial coefficients via the Gregory-Newton decomposition~\cite{IVP, AMS}. The condition $C_2$ guarantees that the series associated with the disjoint union of two chains is a linear combination of the generating series of chains $Series_{C}(\widetilde{\{c,d\}}(P, Q))=\sum_{i\in I} a_iSeries_C(\chain[i])$,  $I$ a finite set of indices.

Let $C-series$ denote the set $\{Series_C(P)|P\in \mathcal{SP}-\hbox{poset}\}$. 

\begin{definition}\label{action:FP}
Given $P, P_1,\cdots,P_k, \in \mathcal{SP}$ poset, $k=|P|$, define the action of the operad $\mathcal{SP}$ on $C-series$ by: \[\widetilde{P}\left(Series_C(P_1),\cdots,Series_C(P_k)\right)=Series_C\left(\widetilde{P}(P_1,\cdots,P_k)\right).\]
\end{definition}

While every factorization of a poset with $n$ leaves evaluated on $n$ copies of the one chain returns the original poset, by definition of factorization, we still need to show that on other poset algebras the action of the operad is independent of the choice of the factorization.
 \begin{lemma}\label{lemma:WD}  Let $\gamma(\widetilde{\{c<d\}})$ and $\gamma(\widetilde{\{c,d\}})$ denote the action of $\widetilde{\{c<d\}}$ and $\widetilde{\{c,d\}}$ on $C$-Series respectively. If $\gamma(\widetilde{\{c<d\}})$ is associative and $\gamma(\widetilde{\{c,d\}})$ is associative and commutative, then the action of the operad $\mathcal{SP}$ on $C$-Series is well defined.
\end{lemma}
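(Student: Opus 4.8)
The plan is to read ``well defined'' as \emph{independent of the chosen factorization}, and to prove it by showing that (a) any two factorizations of a fixed $SP$-poset are connected by a finite sequence of three elementary moves, and (b) each move leaves the induced operation on $C$-Series unchanged precisely because of the two hypotheses. Concretely, the action of an operation $\widetilde{P}\in\mathcal{SP}(n)$ is computed by picking a factorization, i.e.\ a binary tree $T$ with $\widetilde{P}=T$, and running the ``drops of water'' procedure with each vertex label $\widetilde{\{c,d\}}$ (resp.\ $\widetilde{\{c<d\}}$) replaced by the binary operation $\gamma(\widetilde{\{c,d\}})$ (resp.\ $\gamma(\widetilde{\{c<d\}})$) on $C$-Series; this produces an element of $\hom_{Sets}(C\text{-}Series^n, C\text{-}Series)$, and the claim is that it does not depend on $T$. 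Equivalently, I want to extend $\widetilde{\{c,d\}}\mapsto\gamma(\widetilde{\{c,d\}})$ and $\widetilde{\{c<d\}}\mapsto\gamma(\widetilde{\{c<d\}})$ to an operad morphism $\mathcal{SP}\to End_{C\text{-}Series}$; since $\mathcal{SP}$ is generated by these two binary operations such a morphism is unique if it exists, and existence is exactly factorization-independence.

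Next I would introduce the three elementary moves on binary factorizations: (i) replacing a subtree $\widetilde{\{c<d\}}(\widetilde{\{c<d\}}(x,y),z)$ by $\widetilde{\{c<d\}}(x,\widetilde{\{c<d\}}(y,z))$; (ii) the analogous reassociation at two stacked $\widetilde{\{c,d\}}$-vertices; and (iii) swapping the two inputs of a single $\widetilde{\{c,d\}}$-vertex. Each move preserves the poset obtained by evaluation on $(\chain[1],\dots,\chain[1])$ because ordinal sum is associative, disjoint union is associative, and disjoint union is commutative; hence a move sends a factorization of $P$ to another factorization of $P$. The crux of the forward direction is the converse: \emph{any} two binary factorizations of the same $SP$-poset are related by a finite sequence of moves (i)--(iii).

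To prove this connectivity I would pass to the canonical (alternating, multiary) decomposition tree of $P$. Using Lemma~\ref{SP-N} and its reformulation Lemma~\ref{Lemma:N} (the clustering of shared successors/antecessors), every $SP$-poset with at least two points is either disconnected, with a unique decomposition $P=C_1\sqcup\cdots\sqcup C_m$ into connected components, or connected, with a unique finest ordinal decomposition $P=P_1\oplus\cdots\oplus P_m$ into ordinal-sum-indecomposable blocks; recursing and alternating the two cases yields a tree that is unique up to the order of the children at each $\sqcup$-node. Collapsing every maximal run of equal-type vertices in a given binary factorization $T$ (which is a composite of moves (i) and (ii)) produces exactly this canonical multiary tree, with some choice of ordering at the $\sqcup$-nodes; since two binary factorizations collapse to the same canonical tree up to reordering $\sqcup$-children, re-expanding and applying moves (i)--(iii) connects them. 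This is where I expect the main work: carefully proving existence and uniqueness of the canonical decomposition (or invoking the structure theorem for series-parallel posets from~\cite{SPPoset}) and checking that ``collapse, reorder, re-expand'' is realizable by the three moves.

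Finally I would close the loop. Interpreting the induced operation as a composite in the operad $End_{C\text{-}Series}$, moves (i) and (ii) change a composite only by replacing $\gamma(\widetilde{\{c<d\}})\circ(\gamma(\widetilde{\{c<d\}}),\mathrm{id})$ with $\gamma(\widetilde{\{c<d\}})\circ(\mathrm{id},\gamma(\widetilde{\{c<d\}}))$ (and similarly for $\widetilde{\{c,d\}}$), which agree by the assumed associativity, while move (iii) replaces $\gamma(\widetilde{\{c,d\}})$ by its $(1\,2)$-twist, which agrees by the assumed commutativity. Thus the induced operation is invariant under each generating move, hence under any sequence of them, and therefore depends only on $P$ and not on the factorization; this produces the operad morphism $\mathcal{SP}\to End_{C\text{-}Series}$, so the action of Definition~\ref{action:FP} is well defined.
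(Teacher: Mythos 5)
Your proposal follows essentially the same route as the paper's proof: both establish well-definedness by combinatorial reconfiguration, connecting any two binary factorizations of a fixed $SP$-poset through reassociation moves at same-labelled vertices and input swaps at $\widetilde{\{c,d\}}$-vertices, and then noting that associativity of both $\gamma(\widetilde{\{c<d\}})$ and $\gamma(\widetilde{\{c,d\}})$ together with commutativity of $\gamma(\widetilde{\{c,d\}})$ makes each move act trivially on the computed series. The only divergence is one of detail: where the paper justifies connectivity of the move graph by a short appeal to Lemma~\ref{Lemma:N}, you flesh out that step correctly via the canonical alternating series/parallel decomposition tree and a collapse--reorder--re-expand argument, making the same idea more rigorous.
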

\begin{proof}  
We follow the point of view of combinatorial reconfiguration~\cite{CRec}. First{ly}, we describe the space of factorizations of a poset. Secondly, we find a family of transformations $\{T_i\}$ so that one can link any two factorizations of the same poset by applying some sequence of those transformations. Then, we argue that the action of the operad is not changed when we apply any element of $\{T_i\}$ to a factorization.

What is the set of factorizations in our case? Given a pair, $x,y$ with $y$ a successor of $x$, every factorization must have $\widetilde{\{c<d\}}$ applied to (antecessor($y$), successor($x$)) according to Lemma~\ref{Lemma:N}. The reasons for which factorizations {may} differ are the way the sets antecessor($y$) and 
successor($x$) are built out of the associative operation $\widetilde{\{c,d\}}$, the factorizations can differ by the order of application of the associative operation $\widetilde{\{c<d\}}$ to build chains, and the previous two cases applied not to points but to sets of posets.

What sequence of transformations defined on factorizations relates any two factorizations of a fixed poset? Consider the transformation $\sigma_{1,2}(v)$ which switch the inputs of a vertex $v$ labeled by $\widetilde{\{c,d\}}$, and the transformation $a(v,v)$ which transforms $v(\,,v(\,,\,))$ into $v(v(\,,\,),\,)$ when $v$ is a vertex labeled by a binary poset. Then, according to the description of the factorizations, any two factorizations of the same poset can be linked by a series of applications of $a$ and $\sigma_{1,2}$.

Since $\gamma(\widetilde{\{c,d\}})$ is commutative, two trees that differ by the transformation $\sigma_{1,2}$ will compute the same order series. 
The other transformation $a$ encodes associativity and since both operations $\gamma(\widetilde{\{c,d\}})$ and $\gamma(\widetilde{\{c<d\}})$ are associative, then two trees that differ by the transformation $a$ compute the same series.
\end{proof}

Following the proof of Theorem 2.4 of \cite{APV}, we obtain.  
\begin{lemma}\label{Lemma:act}Under the hypothesis of Lemma~\ref{lemma:WD},
 there is an operadic morphism among algebras over the operad of series parallel posets:
$$Series_C: P\in SP- Posets\rightarrow C-Series.$$
\label{Lemma:tool}
\end{lemma}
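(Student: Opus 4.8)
The plan is to check directly that $Series_C$ satisfies the one equation that makes a map into a morphism of $\mathcal{SP}$-algebras: for every operation $\widetilde{P}\in\mathcal{SP}(n)$ and all $SP$-posets $P_1,\dots,P_n$,
\[
Series_C\bigl(\widetilde{P}(P_1,\dots,P_n)\bigr)=\gamma(\widetilde{P})\bigl(Series_C(P_1),\dots,Series_C(P_n)\bigr).
\]
Here the left-hand $\widetilde{P}$ acts on posets by the lexicographic sum, while $\gamma(\widetilde{P})$ denotes the action on $C$-Series assembled from the binary maps $\gamma(\widetilde{\{c<d\}})$ and $\gamma(\widetilde{\{c,d\}})$. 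First I would record that the source is a genuine $\mathcal{SP}$-algebra: the operad $\mathcal{SP}$ acts on the set of $SP$-posets through the lexicographic sum, with $\widetilde{\{c<d\}}$ acting by the associative ordinal sum and $\widetilde{\{c,d\}}$ by the associative and commutative disjoint union, so that every $SP$-poset arises by acting on copies of the point $\chain[1]$ (Remark~\ref{rm:fact}) and the operadic axioms hold automatically.

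For the target, I would invoke Lemma~\ref{lemma:WD}: under the stated hypotheses the value $\gamma(\widetilde{P})(s_1,\dots,s_n)$ does not depend on the factorization of $\widetilde{P}$ used to compute it, so $\gamma$ is a well-defined assignment on operations rather than on trees. To upgrade this to a morphism of operads $\mathcal{SP}\to End_{C\text{-Series}}$ I would verify compatibility with units and with grafting. The unit is immediate. For grafting, the key point is that by Lemma~\ref{SP-N} and Lemma~\ref{Lemma:N} any two factorizations of a single $SP$-poset differ only by the reconfiguration moves $\sigma_{1,2}$ and $a$ used in the proof of Lemma~\ref{lemma:WD}; hence the composite operation $\widetilde{P}\circ_i\widetilde{Q}$ is represented by the grafted factorization, and evaluating $\gamma$ on that factorization gives, by the well-definedness just established, exactly the composite of $\gamma(\widetilde{P})$ with $\gamma(\widetilde{Q})$. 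Inducting on the height of the tree then yields compatibility with arbitrary operadic composition, so $C$-Series is a bona fide $\mathcal{SP}$-algebra.

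With both the source and target confirmed to be $\mathcal{SP}$-algebras, the morphism property is then immediate: it is precisely the content of Definition~\ref{action:FP}, which defines $\gamma(\widetilde{P})$ on the tuple $(Series_C(P_1),\dots,Series_C(P_n))$ to be $Series_C(\widetilde{P}(P_1,\dots,P_n))$. I expect the only genuine obstacle to be the well-definedness already handled by Lemma~\ref{lemma:WD}: because $Series_C$ need not be injective on posets (Doppelg\"angers produce the same series), the action on a series must be independent both of the poset that produced it and of the factorization used to compute it, and it is exactly this independence that the hypotheses of Lemma~\ref{lemma:WD} supply. The remainder is the formal transport of the operad axioms along $Series_C$, carried out as in the proof of Theorem~2.4 of~\cite{APV}.
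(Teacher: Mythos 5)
Your overall architecture matches the paper's in spirit: the morphism identity is tautological from Definition~\ref{action:FP} once the action on $C$-Series is well defined, and Lemma~\ref{lemma:WD} handles factorization-independence. But there is one genuine gap, at the exact point you flag as ``the only genuine obstacle.'' You assert that the hypotheses of Lemma~\ref{lemma:WD} supply independence of the action both of the factorization \emph{and} of the poset that produced a given series. They supply only the former. The proof of Lemma~\ref{lemma:WD} works with reconfiguration moves ($\sigma_{1,2}$ and $a$) linking any two factorizations of a \emph{fixed} poset; it says nothing about two distinct posets $P\neq Q$ with $Series_C(P)=Series_C(Q)$. If $C$-Series were a set of bare series, well-definedness of Definition~\ref{action:FP} would require that Doppelg\"angers remain Doppelg\"angers under every operation, i.e.\ that $Series_C\bigl(\widetilde{R}(\dots,P,\dots)\bigr)=Series_C\bigl(\widetilde{R}(\dots,Q,\dots)\bigr)$ whenever $Series_C(P)=Series_C(Q)$. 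That is a nontrivial statement the paper does not prove for general $C$; indeed, for shuffle series it is recorded only in Section~\ref{Sect6}, as a consequence of \cite[Theorem 4.6]{Doppelgangers2} combined with Theorem~\ref{main:thrm} --- not of Lemma~\ref{lemma:WD}.

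The paper sidesteps this issue entirely, which is why its proof is a two-line argument: the elements of $C$-Series are declared to be \emph{pairs} consisting of a series and the poset that generated it (see the discussion preceding Definition~\ref{action:FP}, where the labeling problem is called ``unavoidable because of Doppelg\"anger posets''). With pairs, $Series_C$ is a set bijection --- the inverse is projection onto the poset component --- so Definition~\ref{action:FP} is transport of structure along a bijection, and the morphism (in fact isomorphism) property is immediate. Your verifications of the unit, grafting compatibility, and the algebra structure on the source are fine and consistent with the paper's appeal to the proof of Theorem~2.4 of \cite{APV}; the repair for your argument is simply to adopt the pairs viewpoint rather than to lean on Lemma~\ref{lemma:WD} for label-independence, which it cannot deliver.
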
\begin{proof}
Since our objects are pairs, a series and the poset that generated the series, there is a set isomorphism between posets and the objects under study. The isomorphism of sets induces an isomorphism of algebras over the operad $\mathcal{SP}$. 
\end{proof}

There are two problems with our current approach, firstly, to define the action we need to keep track of the posets that generate each series, see the definition of  $Series_C(P)$.  Secondly, we still do not know the explicit formulas to compute the action. The first problem is unavoidable because of Doppelg\"anger posets. In the next section, we will solve the second problem.

\begin{lemma}\label{Lemma:aut}
Let $\phi:Posets\rightarrow Posets$ be an automorphism of posets, then for $n>1$ we have $$\#\hom_{Poset}(P,\chain[n])=\#\hom_{Poset}(\phi(P),\chain[n]).$$ \label{Poset:aut}
\end{lemma}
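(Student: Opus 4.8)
The plan is to read $\phi$ as an automorphism of the operad $\mathcal{FP}$ of finite posets (the sense in which the introduction speaks of automorphisms being ``of topological nature''), and to reduce the counting identity to the two basic symmetries of the operad: the identity and the order-reversing functor $P\mapsto P^{op}$. First I would record that $\mathcal{FP}(1)$ consists of a single operation, so $\phi$ fixes the one-point poset $\chain[1]$ and preserves arities. Next I would analyze $\mathcal{FP}(2)=\{\widetilde{\{c,d\}},\widetilde{\{c<d\}},\widetilde{\{d<c\}}\}$ together with its $S_2$-action: the antichain $\widetilde{\{c,d\}}$ is the unique operation fixed by the transposition, while the two chain operations form a single $S_2$-orbit. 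Since an operad automorphism is $S_n$-equivariant, $\phi$ must fix $\widetilde{\{c,d\}}$ and carry $\{\widetilde{\{c<d\}},\widetilde{\{d<c\}}\}$ to itself. Because antichains and chains are precisely the iterated operadic compositions of $\widetilde{\{c,d\}}$ and of $\widetilde{\{c<d\}}$, compatibility with the lexicographic sum then forces $\phi$ to fix every antichain up to isomorphism and to satisfy $\phi(\chain[n])\cong\chain[n]$ for all $n$.

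With the chains pinned down, I would split according to the behaviour of $\phi$ on $\widetilde{\{c<d\}}$. If $\phi(\widetilde{\{c<d\}})=\widetilde{\{c<d\}}$, then $\phi$ fixes both binary generators and acts as the identity on the series-parallel operations, so the statement is immediate there. If instead $\phi(\widetilde{\{c<d\}})=\widetilde{\{d<c\}}$, then $\phi$ coincides with the opposite functor $(-)^{op}$ on these generators, and I would prove the underlying numerical identity $\somega[n]{P}=\somega[n]{P^{op}}$ by an explicit involution: send a strict map $f\colon P\to\chain[n]$ to $\bar f\colon P^{op}\to\chain[n]$ with $\bar f(x)=n+1-f(x)$. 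One checks that $x<_{P^{op}}y\iff y<_P x\Rightarrow f(y)<f(x)\Rightarrow \bar f(x)<\bar f(y)$, so $\bar f$ is strict, and $f\mapsto\bar f$ is a bijection between $\hom_{Poset}(P,\chain[n])$ and $\hom_{Poset}(P^{op},\chain[n])$ using only $\chain[n]^{op}\cong\chain[n]$. This bijection is in fact valid for every $n\ge 1$; the hypothesis $n>1$ is precisely what makes the orientation of a chain detectable at the level of $\mathcal{FP}(2)$ in the first paragraph.

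The step I expect to be the main obstacle is showing that these two cases are exhaustive, i.e. that every automorphism of $\mathcal{FP}$ is either the identity or $(-)^{op}$. For series-parallel posets this is automatic, since $\mathcal{SP}$ is generated by the two binary operations on which $\phi$ is already determined. For arbitrary finite posets one must rule out ``exotic'' automorphisms acting nontrivially on the indecomposable non-series-parallel operations (the $N$-poset and larger indecomposables), and this is exactly the ``topological nature'' of $\operatorname{Aut}(\mathcal{FP})$ referred to earlier. I would establish it by transporting the question to finite $T_0$ spaces under the Alexandrov correspondence, where order-preserving maps are continuous maps, and arguing that the only operadic symmetries compatible with that topology are the identity and specialization-order reversal. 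Granting this classification, the lemma follows from the identity case (trivial) and the opposite case (the involution above), whence $\#\hom_{Poset}(P,\chain[n])=\#\hom_{Poset}(\phi(P),\chain[n])$.
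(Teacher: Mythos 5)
Your argument rests on a classification that you never actually prove: that every automorphism of the operad $\mathcal{FP}$ is either the identity or $(-)^{op}$. You correctly identify this as the main obstacle, but the resolution you offer --- transporting the question to finite $T_0$-spaces via the Alexandrov correspondence and ``arguing that the only operadic symmetries compatible with that topology are the identity and specialization-order reversal'' --- is a restatement of the claim, not a proof of it. Nothing in your $S_2$-equivariance analysis of $\mathcal{FP}(2)$ constrains what $\phi$ does to indecomposable operations of higher arity such as $\widetilde{\{x<y>z<w\}}$; ruling out automorphisms that permute non-isomorphic indecomposables of equal arity while fixing the two binary generators is precisely the hard content, and it is load-bearing for your exhaustiveness claim in the second paragraph. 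As written, the proof is therefore incomplete. The pieces you do carry out are correct: the involution $\bar f(x)=n+1-f(x)$ is a genuine bijection between $\hom_{Poset}(P,\chain[n])$ and $\hom_{Poset}(P^{op},\chain[n])$, valid for every $n\geq 1$, and your analysis of $\mathcal{FP}(2)$ with its $S_2$-action is sound.

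There is also an interpretation mismatch worth flagging: the paper intends something far weaker, and its proof is two lines. There $\phi$ is read as acting on $P$ by an order-isomorphism $P\to\phi(P)$, and the bijection is simply $f\mapsto f\circ\phi^{-1}$ with inverse $g\mapsto g\circ\phi$; the remark immediately following the lemma (``the number of shuffles between a poset $P$ and a chain does not change when we replace the poset by an isomorphic one'') confirms this reading. Under it there is no case analysis, no appeal to $\operatorname{Aut}(\mathcal{FP})$, and the hypothesis $n>1$ does no work in the proof. Note that the paper's precomposition trick could not even be run in your $(-)^{op}$ case, since there $\phi^{-1}:\phi(P)\to P$ is not order-preserving --- a sign that the intended $\phi$ is not the operadic or order-reversing kind. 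So you have attempted a genuinely stronger statement than the one the paper uses, and the stronger statement is exactly where the unproved classification gap sits; to salvage the lemma as stated, the simple composition bijection suffices and your operadic machinery can be dropped entirely.
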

\begin{proof}
The assignment $f\mapsto f\circ\phi^{-1}$ from $\hom_{Poset}(P,\chain[n])$ to $\hom_{Poset}(\phi(P), \chain[n])$ and $g\mapsto g\circ\phi$ from $\hom_{Poset}(\phi(P),\chain[n])$ to $\hom_{Poset}(P, \chain[n])$ are injective and inverse to each other.
\end{proof}

By the Lemma~\ref{Lemma:aut}, automorphisms of posets act on order series trivially. Similarly, the number of shuffles between a poset $P$ and a chain does not changes when we replace the poset by an isomorphic one.

\subsection{The $\mathcal{SP}$ algebra generated by a point}

 We think of series-parallel posets as an algebra over the operad $\mathcal{SP}$ generated by one point, see Remark~\ref{rm:fact}.

Let $A$ be an algebra over the operad $\mathcal{SP}$. Assume that there is an element $point\in A$ so that for every $\widetilde{P}\in \mathcal{SP}$, the evaluation $\widetilde{P}(point,\cdots,point)$ is well defined, here the vector has as many entries as the number of leaves on the factorization of $P$.
\begin{definition}
    The $\mathcal{SP}$ algebra generated by $point$ contains all possible binary trees whose vertices are labeled by posets with two points and every leaf has the object $point$ \[\sqcup_n \{\widetilde{P}(point,\cdots,point)| \widetilde{P}\in \mathcal{SP}(n), \hbox{ and the vector has exactly }n \hbox{ coordinates}\}.\]  
\end{definition}

Since the algebra is obtained by evaluating trees at $(point,\cdots, point)$, the action of $\widetilde{P}$ on elements $a_1,\cdots, a_k$ can be described by choosing a factorization ( the factorization may not be unique) of every $a_i=\widetilde{P}_i(point,\cdots,point)$ and grafting the corresponding trees: 

\begin{eqnarray*}
\widetilde{P}(a_1,\cdots,a_k)&=&\widetilde{P}(\widetilde{P}_1(point,\cdots,point),\cdots,\widetilde{P}_k(point,\cdots,point))\\
&=&\widetilde{P}(\widetilde{P_1},\cdots,\widetilde{P_k})(point,\cdots,point),
\end{eqnarray*}
here the last vector $(point,\cdots, point)$ has as many entries as the sum of the numbers of entries for the fixed factorization of each $a_i$. We have to choose a factorization for $\widetilde{P}$ as well as factorizations for every element $a_i$. 
 
A operadic morphism $\phi$ between the $\mathcal{SP}$-algebra generated by  $point$ and another $\mathcal{SP}$-algebra $F$ is determined by the image of $point$. Effectively, an arbitrary element on the image of $\phi$ can be written as 
$\widetilde{P}(\phi(point),\cdots,\phi(point))$. The grafting of trees preserves this representation, for example:
$\widetilde{P}(\widetilde{Q}(point, point),\widetilde{R}(point))$ which is equivalent to the tree $\widetilde{P}(\widetilde{Q},\widetilde{R})$ evaluated on $(point, point, point)$ is map to the tree $\widetilde{P(Q,R)}$ evaluated on 
$(\phi(point),\phi(point),\phi(point))$, which is the same as $\widetilde{P}(\widetilde{Q}(\phi(point), \phi(point)),\widetilde{R}(\phi(point)))$. 

\subsection{The Shuffle series as $SP$ algebra generated by $\frac{1}{(1-x)^2}$.}
We now apply the theory to shuffle series.

By the definition of shuffle, a shuffle of the union of $P$ and $Q$ with a chain is a pair consisting of a shuffle of $P$ with the chain and a shuffle of $Q$ with the chain, then 
\begin{equation}\label{eq:sh}
\#\hom_{Shuffle}(\widetilde{\{c,d\}}(\chain[n],\chain[m]),\chain[l])=\#\hom_{Shuffle}(\chain[n],\chain[l])\#\hom_{Shuffle}(\chain[m],\chain[l]),\end{equation}
which is compatible with the computations of~\cite{ShuffleTree}, alternatively, we would like to use~\cite[Corollary 5.29]{CatBook}. At the level of generating functions, we require $\widetilde{\{c,d\}}(\mathcal{SH}(P), \mathcal{SH}(Q))$ to be the Hadamard product of the corresponding series.

\begin{lemma}\label{lemmaU}
If $n\geq m$, and $l\in\mathbb{N}$, then
\begin{equation}
    \#\hom_{shuffle}\left(\widetilde{\{c,d\}}(\chain[n], \chain[m]),\chain[l]\right)\nonumber
    \end{equation}
\begin{equation}
    =\sum_{r=0}^m(-1)^{n+m-(r+n)} {n+r\choose m}{m\choose r}\#\hom_{shuffle}(\chain[n+r],\chain[l])\nonumber\label{Eqn:c2}
\end{equation}
\end{lemma}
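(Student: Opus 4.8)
The plan is to reduce the statement to a single binomial identity and then settle that identity by passing to generating functions in $l$.

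First I would invoke the product rule \eqref{eq:sh}, which gives
$$\#\hom_{shuffle}\left(\widetilde{\{c,d\}}(\chain[n],\chain[m]),\chain[l]\right)=\#\hom_{shuffle}(\chain[n],\chain[l])\,\#\hom_{shuffle}(\chain[m],\chain[l]),$$
and then substitute the closed form $\#\hom_{shuffle}(\chain[a],\chain[l])=\multiset{l+1}{a}=\binom{a+l}{a}$ computed immediately before the lemma. After simplifying the exponent $n+m-(r+n)=m-r$, the asserted equality becomes the polynomial identity in $l$
$$\binom{n+l}{n}\binom{m+l}{m}=\sum_{r=0}^m(-1)^{m-r}\binom{n+r}{m}\binom{m}{r}\binom{n+r+l}{n+r}.$$

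To prove this identity I would multiply both sides by $x^l$ and sum over $l\ge 0$, so that each term $\binom{a+l}{a}$ turns into $\frac{1}{(1-x)^{a+1}}=\mathcal{SH}(a)$. The right-hand side then equals $\frac{1}{(1-x)^{n+1}}\sum_{r=0}^m(-1)^{m-r}\binom{n+r}{m}\binom{m}{r}(1-x)^{-r}$, while the left-hand side is the Hadamard product of $\frac{1}{(1-x)^{n+1}}$ and $\frac{1}{(1-x)^{m+1}}$ (which is exactly the operation the disjoint union $\widetilde{\{c,d\}}$ should realize). I would evaluate that Hadamard product by writing $\binom{m+l}{m}=[y^m]\,(1-y)^{-(l+1)}$, summing the resulting geometric-type series in $l$, and extracting
$$[y^m]\,\frac{(1-y)^n}{(1-y-x)^{n+1}};$$
expanding $(1-y)^n$ and $(1-y-x)^{-(n+1)}=\frac{1}{(1-x)^{n+1}}\sum_k\binom{n+k}{k}\bigl(\tfrac{y}{1-x}\bigr)^k$ separately and taking the coefficient of $y^m$ collects the terms indexed by $(1-x)^{-(n+r+1)}$.

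Matching the two expressions coefficient-by-coefficient in the powers $(1-x)^{-(n+r+1)}$ reduces the whole claim to the elementary subset-of-a-subset (trinomial revision) identity $\binom{n}{m-r}\binom{n+r}{r}=\binom{n+r}{m}\binom{m}{r}$, which is immediate from factorials. I expect the only real obstacle to be the bookkeeping in the Hadamard-product step—keeping the two summation indices (one from $(1-y)^n$, one from the negative binomial series) straight and re-indexing correctly—rather than any conceptual difficulty. An alternative that avoids generating functions is to recognize the coefficients $(-1)^{m-r}\binom{m}{r}$ as the $m$-th forward difference $\Delta^m$ in the auxiliary variable $r$ and to read the identity as the Gregory–Newton decomposition of the degree-$m$ polynomial $l\mapsto\binom{m+l}{m}$ in the basis $\{\binom{n+r+l}{n+r}\}_{r=0}^m$; this matches the $C_1$–$C_2$ framework set up earlier, but I find the generating-function route cleaner to write down.
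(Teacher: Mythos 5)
Your proposal is correct, and its opening reduction coincides with the paper's: both use the product rule \eqref{eq:sh} together with the closed form $\#\hom_{shuffle}(\chain[a],\chain[l])=\multiset{l+1}{a}=\binom{a+l}{a}$ to reduce the lemma to the polynomial identity in $l$
\begin{equation*}
\binom{n+l}{n}\binom{m+l}{m}=\sum_{r=0}^m(-1)^{m-r}\binom{n+r}{m}\binom{m}{r}\binom{n+r+l}{n+r}.
\end{equation*}
Where you genuinely diverge is in how that identity is settled. The paper does not prove it: it quotes the classical product formula $\binom{x}{n}\binom{x}{m}=\sum_{r=0}^m\binom{n+r}{m}\binom{m}{r}\binom{x}{n+r}$ from \cite[6.44]{lcomb}, substitutes $x\mapsto -x$ using $(-1)^b\binom{-a}{b}=\multiset{a}{b}$, and evaluates at $x=l+1$ --- a two-line derivation whose reciprocity step deliberately foreshadows the Loeb-style $x\mapsto -x$ machinery of Section~\ref{Sec:Second}. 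You instead prove the identity from scratch: summing against $x^l$ turns the left side into the Hadamard product of $(1-x)^{-(n+1)}$ and $(1-x)^{-(m+1)}$, and your kernel computation is correct --- $\sum_{l\ge 0}\binom{n+l}{n}x^l(1-y)^{-(l+1)}=(1-y)^n(1-y-x)^{-(n+1)}$, so the product equals $[y^m]\,(1-y)^n(1-y-x)^{-(n+1)}=\sum_{r=0}^m(-1)^{m-r}\binom{n}{m-r}\binom{n+r}{r}(1-x)^{-(n+r+1)}$ --- after which trinomial revision $\binom{n}{m-r}\binom{n+r}{r}=\binom{n+r}{m}\binom{m}{r}$ matches this termwise with the right-hand side (the hypothesis $n\geq m$ ensuring every index $m-r\leq n$ is admissible, so no terms are silently lost). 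What each route buys: the paper's argument is shorter and keeps the reciprocity substitution visible as a recurring theme of the whole paper; yours is self-contained, removes the dependence on the cited identity, and as a byproduct directly produces the generating-function form \eqref{eqn:U} that the paper actually uses downstream in Theorem~\ref{Thm:equality}, rather than recovering it afterwards coefficientwise. Your closing remark about reading the coefficients $(-1)^{m-r}\binom{m}{r}$ as an $m$-th forward difference is only a sketch, but it is not load-bearing, so the proof stands without it.
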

\begin{proof}

We computed $\#\hom_{shuffle}(\chain[n],\chain[l])={l+n\choose n}=\multiset{l+1}{n}$. The identity \cite[6.44]{lcomb}, for $n\geq m$:
\begin{eqnarray}\label{Eq:id}
  {x\choose n}{x\choose m}&=&\sum_{r=0}^m {n+r\choose m}{m\choose r} {x\choose n+r},  
\end{eqnarray}
can be seen as an identity of polynomials, and we replace $x\rightarrow -x$ to obtain
\begin{eqnarray}\label{Eq:id2}
  (-1)^{n+m}\multiset{x}{n}\multiset{x}{m}&=&\sum_{r=0}^m(-1)^{n+r} {n+r\choose m}{m\choose r} \multiset{x} {n+r},  
\end{eqnarray}

by using $(-1)^b{-a\choose b}=\multiset{a}{b}$. If we replace $x$ by $l+1$ we obtain 
\begin{eqnarray*}\lefteqn{
    \#\hom_{shuffle}(\widetilde{\{c,d\}}(\chain[n], \chain[m]),\chain[l])}\\&=&\sum_{r=0}^m(-1)^{m+r} {n+r\choose m}{m\choose r}\#\hom_{shuffle}(\chain[n+r],\chain[l]).
\end{eqnarray*}

\end{proof}

In particular, at the level of generating functions, we obtain from Equation~\eqref{Eqn:c2} the explicit formula for the shuffle of chains, if $n\geq m$: 
\begin{equation}\label{eqn:U}
  \widetilde{\{c,d\}}(\mathcal{SH}(n),\mathcal{SH}(m))= \sum_{r=0}^{m}(-1)^{n+m-(n+r)}{n+r\choose m}{m\choose r}\mathcal{SH}(n+r).  
\end{equation}

\begin{lemma} The action of $\widetilde{\{c<d\}}$ on chains is given by
$$\widetilde{\{c<d\}}(\mathcal{SH}(k),\mathcal{SH}(l))=\mathcal{SH}(k)(1-x)\mathcal{SH}(l)=\mathcal{SH}(k+l),$$\label{lemma:ch}
where $k,l\in\mathbb{N}$. 
\end{lemma}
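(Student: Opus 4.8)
The plan is to prove the two asserted equalities separately, since each reduces to something already available in the excerpt. The right-hand equality $\mathcal{SH}(k)(1-x)\mathcal{SH}(l)=\mathcal{SH}(k+l)$ is purely algebraic. Using the closed form $\mathcal{SH}(m)=\frac{1}{(1-x)^{m+1}}$ computed earlier in this section, I would simply substitute and cancel one factor of $(1-x)$:
\[
\mathcal{SH}(k)(1-x)\mathcal{SH}(l)=\frac{1}{(1-x)^{k+1}}\,(1-x)\,\frac{1}{(1-x)^{l+1}}=\frac{1}{(1-x)^{k+l+1}}=\mathcal{SH}(k+l).
\]
This equality is immediate once the closed form is in hand, so it requires no further argument.

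For the left-hand equality $\widetilde{\{c<d\}}(\mathcal{SH}(k),\mathcal{SH}(l))=\mathcal{SH}(k+l)$, the key observation is structural rather than computational: the ordinal sum of two chains is again a chain, a fact already recorded in Section~\ref{Sec:First}, namely $\widetilde{\{c<d\}}(\chain[k],\chain[l])=\chain[k+l]$. By Definition~\ref{action:FP}, the action of $\mathcal{SP}$ on $C$-series satisfies $\widetilde{P}(Series_C(P_1),\dots,Series_C(P_k))=Series_C(\widetilde{P}(P_1,\dots,P_k))$; specializing to $P=\{c<d\}$, inputs $\chain[k]$ and $\chain[l]$, and $Series_C=\mathcal{SH}$, and then using the convention $\mathcal{SH}(k):=\mathcal{SH}(\chain[k])$, I obtain
\[
\widetilde{\{c<d\}}(\mathcal{SH}(k),\mathcal{SH}(l))=\mathcal{SH}\!\left(\widetilde{\{c<d\}}(\chain[k],\chain[l])\right)=\mathcal{SH}(\chain[k+l])=\mathcal{SH}(k+l).
\]

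Chaining the two displays gives the full statement $\widetilde{\{c<d\}}(\mathcal{SH}(k),\mathcal{SH}(l))=\mathcal{SH}(k)(1-x)\mathcal{SH}(l)=\mathcal{SH}(k+l)$. I do not anticipate a genuine obstacle here: the entire content is that one equality is the defining property of the operadic action applied to the elementary identity that ordinal sums of chains are chains, and the other is a one-line manipulation of geometric series. The only point meriting a brief remark is that the action through Definition~\ref{action:FP} is well defined on series, which for these operations is guaranteed by Lemma~\ref{lemma:WD}, since the ordinal sum realizing $\widetilde{\{c<d\}}$ is associative; for chains there is moreover no Doppelg\"anger ambiguity, so the specialization above is unambiguous.
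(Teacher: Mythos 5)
Your proposal is correct and follows essentially the same route as the paper: the paper likewise obtains $\widetilde{\{c<d\}}(\mathcal{SH}(k),\mathcal{SH}(l))=\mathcal{SH}(k+l)$ from the operadic action applied to $\widetilde{\{c<d\}}(\chain[k],\chain[l])=\chain[k+l]$, and then matches the closed forms $\frac{1}{(1-x)^{k+1}}(1-x)\frac{1}{(1-x)^{l+1}}=\frac{1}{(1-x)^{k+l+1}}$ to identify the operation with the deformed product, using injectivity of $m\mapsto\mathcal{SH}(m)$ on chains just as you invoke the absence of Doppelg\"anger ambiguity. Your splitting into two separately verified equalities is slightly more explicit than the paper's phrasing (which asserts the deformed product is ``the only operation with that behaviour''), but the content is the same.
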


\begin{proof}
First note that the action of $\widetilde{\{c<d\}}$ on shuffle series of chains is $\widetilde{\{c<d\}}(\mathcal{SH}(k),\mathcal{SH}(l)) = \mathcal{SH}(k+l)$. 

Chains of different sizes have assigned functions of different exponent, then the shuffle series assignment: $P\rightarrow \mathcal{SH}(P)$ is bijective on chains, and we can forget the poset label. The operation $\widetilde{\{c<d\}}$ acts on shuffles of chains by $$\widetilde{\{c<d\}}(\frac{1}{(1-x)^{k+1}},\frac{1}{(1-x)^{l+1}})=\frac{1}{(1-x)^{k+l+1}}.$$   The only operation with that behaviour is the deformed product $f,g\mapsto f(1-x)g$.
\end{proof}

As a corollary of Theorem~\ref{main:thrm}, one can follow the explicit construction~\cite[Proposition 2.4, Proposition 2.5, Equation (7)]{posets} which provides an alternative explanation of the term $1-x$ in  Lemma~\ref{lemma:ch}. The alternative explanation uses splitting sequences of sets. Compare with the proof of ~\cite[Proposition 2.13]{posets} and the comments after Lemma~\ref{Lemma:lsh}.

Since the Hadamard product is associative and commutative, and the deformed Cauchy product of series in Lemma~\ref{lemma:ch} is commutative, the action of $\mathcal{SP}$ is well defined and we can apply the results of the previous section.

  We denote by $\mathcal{SH}(\varnothing)=\frac{1}{1-x}$, this series is the unit under the action of $\widetilde{\{c,d\}}$ and $\widetilde{\{c<d\}}$. See Remark~\ref{Remark:maya} for an explanation of the origin of this notation.

Once we know the action of the posets generators under lexicographic sum on shuffles of chains, we can compute $\widetilde{P}(\mathcal{SH}(1),\cdots,\mathcal{SH}(1))$. The following theorem is of combinatorial type, it explains that by computing the series  $\widetilde{P}(\mathcal{SH}(1),\cdots,\mathcal{SH}(1))$ we solve the problem of counting shuffles between $P$ and chains.

\begin{theorem}The $\mathcal{SP}$-algebra generated by $\mathcal{SH}(1)=\frac{1}{(1-x)^2}$ with the action of $\widetilde{\{c,d\}}(f,g)$ given by the Hadamard product of $f$ and $g$, and the action of $\widetilde{\{c<d\}}(f,g)=f(1-x)g$, is the $\mathcal{SP}$-algebra of shuffle series.\label{Thm:equality}
\end{theorem}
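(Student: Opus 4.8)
The plan is to prove, by induction on a factorization tree, that for every $SP$-poset $P$ one has $\widetilde{P}(\mathcal{SH}(1),\dots,\mathcal{SH}(1))=\mathcal{SH}(P)$, where the left-hand side is computed with the Hadamard product $\odot$ at each $\widetilde{\{c,d\}}$-vertex and the deformed product $f\star g=f(1-x)g$ at each $\widetilde{\{c<d\}}$-vertex. Since, by Remark~\ref{rm:fact}, every $SP$-poset is $\widetilde{P}(\chain[1],\dots,\chain[1])$ for some factorization, and $\mathcal{SH}(\chain[1])=\frac{1}{(1-x)^2}$, this identity says exactly that the concrete $\mathcal{SP}$-algebra generated by $\frac{1}{(1-x)^2}$ and the shuffle-series $\mathcal{SP}$-algebra agree. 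The bookkeeping is legitimate because $\odot$ is associative and commutative and $\star$ is associative, so Lemma~\ref{lemma:WD} makes the concrete action well defined; meanwhile the poset-level disjoint union is associative and commutative and the ordinal sum is associative, so the abstract shuffle action of Definition~\ref{action:FP} is equally well defined and independent of the chosen factorization by Lemmas~\ref{lemma:WD} and~\ref{Lemma:act}. The base case is $\mathcal{SH}(1)=\frac{1}{(1-x)^2}$, and the inductive step reduces to checking, at an internal vertex, that each generating operation of $\mathcal{SP}$ acts on shuffle series as claimed — that is, to two identities valid for all $SP$-posets $P,Q$, not merely for chains.

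The first identity, for $\widetilde{\{c,d\}}$, extends Equation~\eqref{eq:sh} from chains to arbitrary $P,Q$: a shuffle of $\widetilde{\{c,d\}}(P,Q)$ with $\chain[l]$ is determined by its restrictions to the mutually unrelated maximal chains coming from $P$ and from $Q$, hence is exactly a pair of a shuffle of $P$ with $\chain[l]$ and a shuffle of $Q$ with $\chain[l]$. Therefore
\begin{equation*}
\#\hom_{Shuffle}(\widetilde{\{c,d\}}(P,Q),\chain[l])=\#\hom_{Shuffle}(P,\chain[l])\,\#\hom_{Shuffle}(Q,\chain[l])
\end{equation*}
for every $l$, which is precisely the statement that the induced operation on generating series is the Hadamard product $\odot$; this settles $\widetilde{\{c,d\}}$ on the whole algebra.

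The second identity, for $\widetilde{\{c<d\}}$, is the crux. Writing $p_a=\#\hom_{Shuffle}(P,\chain[a])$ and $q_b=\#\hom_{Shuffle}(Q,\chain[b])$, the target $\mathcal{SH}(\widetilde{\{c<d\}}(P,Q))=\mathcal{SH}(P)(1-x)\mathcal{SH}(Q)$ is equivalent, coefficient by coefficient, to
\begin{equation*}
\#\hom_{Shuffle}(\widetilde{\{c<d\}}(P,Q),\chain[l])=\sum_{a+b=l}p_aq_b-\sum_{a+b=l-1}p_aq_b .
\end{equation*}
I would establish this at the poset level by analysing a shuffle of $\widetilde{\{c<d\}}(P,Q)$ with $\chain[l]$ at the junction where all of $P$ lies below all of $Q$: the linear order $\chain[l]$ splits into an initial block shuffled into $P$ and a final block shuffled into $Q$, the only indeterminacy being a single element lying strictly between the two blocks, which may be assigned to either side. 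The leading sum counts the honest splits and the correction $-\sum_{a+b=l-1}p_aq_b$ removes the overcount at that shared junction element, producing the factor $1-x$; this is the splitting-sequences-of-sets mechanism of \cite[Proposition 2.4, Proposition 2.5, Equation (7)]{posets} invoked after Lemma~\ref{lemma:ch}, and on chains it recovers Lemma~\ref{lemma:ch}.

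The main obstacle is precisely this junction analysis for non-chain inputs. Lemma~\ref{lemma:ch} supplies the deformed product only for chains, yet every nontrivial factorization feeds the shuffle series of genuinely non-chain sub-posets into a $\widetilde{\{c<d\}}$-vertex; and one cannot reduce to chains by naive bilinearity, because $\mathcal{SH}(P)$ determines $P$ only up to Doppelg\"angers, so the identity must be proved at the level of the posets $P$ and $Q$ rather than of their series. Once both generating operations are known to act as $\odot$ and $\star$ on all shuffle series, the induction on the factorization tree closes immediately and yields $\widetilde{P}(\mathcal{SH}(1),\dots,\mathcal{SH}(1))=\mathcal{SH}(P)$ for every $SP$-poset $P$, identifying the two $\mathcal{SP}$-algebras.
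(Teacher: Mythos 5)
Your overall architecture---induction on a factorization tree, well-definedness via Lemma~\ref{lemma:WD}, the base case $\mathcal{SH}(1)=\frac{1}{(1-x)^2}$, and the combinatorial product formula for $\widetilde{\{c,d\}}$ on arbitrary $P,Q$ (which the paper also asserts for general posets in the discussion around Equation~\eqref{eq:sh})---matches the paper's proof. Where you genuinely diverge is the $\widetilde{\{c<d\}}$ step: the paper never proves the ordinal-sum identity combinatorially for non-chain inputs; it expands every intermediate series in the chain basis by bilinearity, invokes only the chain-level formulas (Lemma~\ref{lemmaU} and Lemma~\ref{lemma:ch}), and asserts that the enumeration propagates vertex by vertex, deferring the combinatorial explanation of the factor $1-x$ to the splitting-sequence construction of \cite[Propositions 2.4--2.5]{posets} via Theorem~\ref{main:thrm}. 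Your stated reason for rejecting that route is off target: the identity $\mathcal{SH}(\widetilde{\{c<d\}}(P,Q))=\mathcal{SH}(P)(1-x)\mathcal{SH}(Q)$ concerns only series, so the non-injectivity of $P\mapsto\mathcal{SH}(P)$ (Doppelg\"angers) is irrelevant to it; the genuine limitation of bilinearity, which you correctly sense, is that the chain-basis coefficients are signed, so the combinatorial meaning does not transfer for free and a poset-level argument is needed somewhere.

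The gap is that your poset-level argument---the ``junction analysis''---is valid verbatim only when $P$ and $Q$ are chains, because only then is a shuffle a linear order. For branching $SP$-posets a colimit-indexing shuffle may duplicate elements of $\chain[l]$ along parallel branches, and the $P$-side versus $Q$-side position of a chain element is decided per maximal chain rather than globally, so the picture ``$\chain[l]$ splits into an initial block shuffled into $P$ and a final block shuffled into $Q$, with a single straddling element'' misdescribes both the decomposition and the overcount. Concretely, take $P=\chain[1]=\{p\}$ and $Q=\widetilde{\{c,d\}}(\chain[1],\chain[1])=\{q_1,q_2\}$, so $\widetilde{\{c<d\}}(P,Q)=\{p<q_1,\,p<q_2\}$ and $\mathcal{SH}(\widetilde{\{c<d\}}(P,Q))=2\mathcal{SH}(3)-\mathcal{SH}(2)$, predicting five shuffles with $\chain[1]=\{e\}$. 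Your leading sum is $p_1q_0+p_0q_1=2\cdot 1+1\cdot 4=6$ and your correction is $p_0q_0=1$; but among the six glued pairs are both $(p<e)$ glued below $Q$ (a \emph{single} copy of $e$ below both $q_i$) and $p$ glued below the $Q$-shuffle with $e<q_1$ and a duplicate $e'<q_2$ (\emph{two} copies of $e$ above $p$): the correction must merge or discard exactly this structurally inhomogeneous pair of junction configurations, not ``a single element lying strictly between two blocks,'' and for larger $l$ the multiplicity bookkeeping turns on which chain elements are shared versus duplicated across branches at the junction. Proving that this bookkeeping always yields the coefficient $\sum_{a+b=l}p_aq_b-\sum_{a+b=l-1}p_aq_b$ is precisely the content your sketch assumes. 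To close it you would need an honest gluing-and-identification analysis at the junction for branching posets (a shuffle analogue of \cite[Propositions 2.4--2.5]{posets}, where duplication is handled), or else the route the paper itself indicates after Lemma~\ref{lemma:ch}: pass through weak order series and Theorem~\ref{main:thrm}, where the action of $\widetilde{\{c<d\}}$ is established in \cite[Proposition 2.13]{posets}.
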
\begin{proof}
Let $P$ be a $SP$-poset. We claim that when we compute the series $\widetilde{P}(\mathcal{SH}(1),\cdots,\mathcal{SH}(1))$, starting from the leaves to the root, at every stage of the computation we obtain a linear combination of chains.

Firstly, consider a vertex whose two incoming edges are leaves, if we find the label $\widetilde{\{c,d\}}$ acting as the Hadamard product of the shuffles series of two chains, from Equation~\eqref{eqn:U} 
the result is a linear combination of shuffle series of chains,  $\widetilde{\{c,d\}}(\mathcal{SH}(1),\mathcal{SH}(1))=  2\mathcal{SH}(2)-\mathcal{SH}(1)$. On the other hand, if we find $\widetilde{\{c<d\}}$ with inputs $(\mathcal{SH}(1),\mathcal{SH}(1))$, we obtain a linear combination of chains as output by Lemma~\ref{lemma:ch}. 

Now, iteratively, if the inputs of a vertex are two linear combinations of chains, then we use the distributivity of the Hadamard product with respect to addition or the distributivity of the multiplication by $(1-x)$  with respect to addition, to simplify $\widetilde{\{c,d\}}\big( \sum_i a_i\mathcal{SH}(i),\sum_j b_j\mathcal{SH}(j) \big)$, into $$\sum_i\sum_j a_ib_j \widetilde{\{c,d\}}\big(\mathcal{SH}(i),\mathcal{SH}(j) \big),$$
similarly, $$ \widetilde{\{c<d\}}\big( \sum_i a_i\mathcal{SH}(i),\sum_j b_j\mathcal{SH}(j) \big)=\sum_i\sum_j a_ib_j \widetilde{\{c<d\}}\big(\mathcal{SH}(i),\mathcal{SH}(j) \big).$$

We repeat the previous process until we reach the root.

The series $\mathcal{SH}(1)$ counts shuffles between $\chain[1]$ and other chains by definition. Let $v$ be a vertex on the factorization of $P$ and let $P_v$ be the poset whose factorization is the subtree of the factorization of $P$ with root $v$. From Lemma~\ref{lemmaU} and Lemma~\ref{lemma:ch} we guarantee that at vertex $v$ the series obtained by evaluating the factorization of $P_v$ on $(\mathcal{SH}(1),\cdots,\mathcal{SH}(1))$  (with as many entries as needed) is counting shuffles between the poset $P_v$ and chains. Then, when we reach the root, the final series counts the shuffles of $P$ with chains, which is the definition of $\mathcal{SH}(P)$.
\end{proof}
   
\begin{corollary}
    For every $SP$-poset $P$, there is a polynomial that evaluated on $n$ has the value $\#\hom_{shuffle}(P,\chain[n])$. The polynomials $\#\hom_{shuffle}(P,x)$ satisfy condition $C_2$.
\end{corollary}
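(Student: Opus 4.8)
The plan is to treat this corollary as a bookkeeping consequence of Theorem~\ref{Thm:equality} and Lemma~\ref{lemmaU} rather than as a new result. For the polynomiality claim I would start from the identity $\mathcal{SH}(P)=\widetilde{P}(\mathcal{SH}(1),\cdots,\mathcal{SH}(1))$ supplied by Theorem~\ref{Thm:equality}. The proof of that theorem shows, by induction from the leaves toward the root, that every intermediate series produced while evaluating the factorization is a finite $\mathbb{Z}$-linear combination of the chain shuffle series $\mathcal{SH}(i)=\frac{1}{(1-x)^{i+1}}$; in particular the value at the root can be written as $\mathcal{SH}(P)=\sum_{i}c_i\,\mathcal{SH}(i)$ for finitely many integers $c_i$.

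The second step extracts coefficients. Since $\mathcal{SH}(i)=\frac{1}{(1-x)^{i+1}}=\sum_{n=0}^{\infty}{n+i\choose i}x^{n}$, the $n$-th coefficient of $\mathcal{SH}(i)$ is the binomial ${n+i\choose i}$, an integer-valued polynomial in $n$ of degree $i$. Consequently $\#\hom_{shuffle}(P,\chain[n])=[x^{n}]\mathcal{SH}(P)=\sum_{i}c_i{n+i\choose i}$ is a finite integer combination of polynomials in $n$, hence itself a polynomial in $n$. This establishes condition $C_1$, and it simultaneously exhibits the Gregory--Newton expansion of $\#\hom_{shuffle}(P,x)$ in binomial coefficients.

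For condition $C_2$ there is nothing further to prove: it asks precisely that $\#\hom_{shuffle}(\widetilde{\{c,d\}}(\chain[j],\chain[k]),\chain[n])$ be a finite $\mathbb{Z}$-linear combination of the counts $\#\hom_{shuffle}(\chain[i],\chain[n])$, and this is exactly Lemma~\ref{lemmaU} (equivalently Equation~\eqref{eqn:U}), with index set $I=\{j,\ldots,j+k\}$ after ordering so that the larger chain plays the role of $n$ in that lemma, and with integer coefficients $(-1)^{m+r}{n+r\choose m}{m\choose r}$.

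The only point requiring care, which I would flag as the (mild) obstacle, is verifying that the combination $\mathcal{SH}(P)=\sum_i c_i\,\mathcal{SH}(i)$ is genuinely finite and has integer coefficients. Both properties are guaranteed by the inductive step inside Theorem~\ref{Thm:equality}: the Hadamard product of two chain series is resolved into chains by Equation~\eqref{eqn:U}, whose coefficients are integers and whose index set is finite, while the deformed product $f,g\mapsto f(1-x)g$ sends a pair of chain series to a single chain series by Lemma~\ref{lemma:ch}; distributivity then propagates finiteness and integrality through each internal vertex up to the root. Once this is recorded, the corollary follows.
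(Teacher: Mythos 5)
Your proposal is correct and takes essentially the same route as the paper's own proof: both invoke Theorem~\ref{Thm:equality} to write $\mathcal{SH}(P)=\sum_i c_i\,\mathcal{SH}(i)$ as a finite integer combination of chain series, read off the $n$-th coefficient $\binom{n+i}{i}=\multiset{n+1}{i}$ to get the explicit polynomial $\#\hom_{shuffle}(P,x)=\sum_i c_i\multiset{x+1}{i}$, and cite Lemma~\ref{lemmaU} for condition $C_2$. Your added verification that the $c_i$ are finite in number and integral (via Equation~\eqref{eqn:U} and Lemma~\ref{lemma:ch} propagated through the factorization tree) is left implicit in the paper but is not a different argument.
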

\begin{proof} 
The condition $C_1$ states that the function $n\rightarrow \#\hom_{Shuffle}(Q,\chain[n]), n\in\mathbb{N},$ is a polynomial on the variable $n$. The condition $C_1$ is proved for the poset of vertices of a tree in~\cite[Proposition 3.5]{ShuffleTree} by using Lemma~\ref{Lemma:reconciliation}. 
The proof is iterative, starting from corollas, and then lexicographic sums of corollas on a corolla, etc.

To prove condition $C_1$ for series parallel posets, note that to any chain $\chain[m]$ we compute $\#\hom_{Shuffle}(m,n)=\multiset{n+1}{m}$, where $\multiset{x}{b}={x+b-1\choose b}=\frac{(x+b-1)(x+b-2)\cdots(x)}{b!}$.

By Theorem~\ref{Thm:equality} we obtain $\mathcal{SH}(P)=\sum c_i\mathcal{SH}(i)$. Then, we obtain the explicit formula 
\begin{equation}
    \#\hom_{shuffle}(P,x)=\sum_i c_i\multiset{x+1}{i}.\label{eq:ci}
\end{equation}

Condition $C_2$ follows from Lemma~\ref{lemmaU}.
\end{proof}

Section~\ref{Sec:Third} describes everything we know about the 
coefficients $c_i$ in Equation~\eqref{eq:ci}.


\subsection{Relation with tree shuffles}\label{TS}
Our main motivation to write this paper is to count the (graph) shuffles between trees and linear trees, an open problem described in the paper~\cite[Page 64]{ShuffleTree}. In this section, we explain the relationship between posets and the (graph) trees of the aforementioned paper. 
To prevent confusion, we use the letters $P,Q$ for posets and $S,T$ for trees. We review the main definitions from~\cite{ShuffleTree} and refer the reader to the original paper for details.

A linear tree is a pair $(V,E)$ of vert{ices} and edges where the set of edges is $0,1,\cdots, n$, every vertex has one incoming edge and one outgoing edge, and there is one special edge called the {$0$ root} and one special edge called the {$n$ leaf}.  Besides the root and the leaf, any other edge connects two vertices.
Let $(V=\{0,\cdots,n\},E)$ and $(V_2=\{0,\cdots,m\},E_2)$ be two linear trees. Consider the rectangle with vertices $(a,b)$ with $a\in E$ and $b\in E_2$.
A classical shuffle of linear trees $(V,E)$ and $(V_2,E_2)$ 
is given by a path from $(0,0)$ to $(n,m)$ using only movements of the form $(r,s)\mapsto(r+1,s)$ or $(r,s)\mapsto(r,s+1)$.   

A tree is a finite graph without cycles, whose external edges are connected to one vertex only, and one of those external edges is called the root while the other external edges are called leaves.
\begin{definition}
Let $S$ and $T$ be two trees. A shuffle of $S$ and $T$ is a tree $A$ satisfying:
\begin{itemize}
\item[(1)] The edges of $A$ are labeled by pairs $(s,t)$ where $s$ and $t$ are edges of $S$ and $T$, respectively.
\item[(2)] The root of $A$ is labelled by the pair $(r_s,r_t)$ of root edges of $S$ and $T$. 
\item[(3)] The set of labels of the leaves of $A$ is equal to the cartesian product of the leaves of $S$ and those of $T$.
\item[(4)] For any two leaves $s$ {in} $S$ and $t$ {in} $T$, the branch from the leaf $(s,t)$ {in} $A$ down to its root, is a classical shuffle, of the two branches from $s$ down to the root in $S$ and from $T$ down to the root in $T$.
\end{itemize}
\end{definition}

Let $T$ be a linear tree and $S$ be an arbitrary tree. In this section {we} will denote the set of shuffles of $S,T$ by $Sh_{Graph}(S,T)$, and the set of shuffles of a poset $P$ with a chain $\chain[n]$ by $Sh_{Poset}(P,n)$. From \cite[Corollary 4.5]{ShuffleTree} ${Sh}_{Graph}(S,T)={Sh}_{Graph}(S_{red},T_{red})$, where the reduction of the tree $S$ is the tree $S_{red}$ obtained by pruning away all leaves, and putting exactly one leaf on each top vertex. 

To any reduced tree $T$, we define $\phi(T)$ the poset obtained by removing the external edges (the root and the leaves), and considering the resulting graph as the Hasse diagram of a poset $U(T)$, we use the order on $U(T)$ in which the root vertex is the minimum.
Note that this assignment $T\mapsto U(T)$ is an isomorphism.

The unit tree has an edge and no vertices, it is denoted by $\eta$, and we declare $U(\eta):=\varnothing$.

\begin{lemma}If $S$ is a reduced tree, and $T$ is a linear tree, then:
\[\#{Sh}_{Graph}(S,T)=\#{Sh}_{Poset}(U(S),U(T)).\]\label{Lemma:reconciliation}
\end{lemma}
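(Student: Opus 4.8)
The plan is to produce an explicit bijection between ${Sh}_{Graph}(S,T)$ and ${Sh}_{Poset}(U(S),U(T))$ by transporting the shuffle tree through the isomorphism $A\mapsto U(A)$. Since $U$ is an isomorphism between reduced trees and their posets, and since by \cite[Corollary 4.5]{ShuffleTree} graph shuffles are insensitive to reduction, I would first reduce to the situation where $S$, $T$ and the shuffle $A$ are all reduced trees, so that $U(A)$ is defined. The starting observation is that each internal vertex of a shuffle tree $A$ is canonically typed: reading a branch of $A$ from the root upward, every step increments either the $S$-coordinate or the $T$-coordinate of the edge label $(s,t)$ (condition (1) of the definition of a graph shuffle), so the vertex crossed by that step is naturally an \emph{$S$-vertex} or a \emph{$T$-vertex}. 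Under $U$, the $S$-vertices become points of $U(A)$ labeled by $U(S)$ and the $T$-vertices become points labeled by $U(T)$; this produces exactly the labeling required by the first bullet of the definition of $\hom_{Shuffle}$.

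Next I would verify the remaining three conditions for $U(A)$ to be a poset shuffle of $U(S)$ and $U(T)$. Because a tree has a single root edge, $U(S)$ and $U(A)$ each have exactly one minimal element (the root vertex), matching the count of minima. For the maxima, reducedness is what makes the count work: in a reduced tree every top vertex carries exactly one leaf, so the maximal elements of $U(A)$ biject with the leaves of $A$, which by condition (3) biject with (leaves of $S$)$\times$(leaves of $T$); since $T$ is linear it has a single leaf, the maximal elements of $U(A)$ biject with the leaves of $S$, that is, with the maximal elements of $U(S)$. Finally, the maximal chains of $U(A)$ are exactly the images under $U$ of the branches of $A$, and each such branch is by condition (4) a classical shuffle of the corresponding branch of $S$ with $T$; applying $U$ turns a classical shuffle of linear-tree branches into a shuffle of the maximal chain of $U(S)$ with the chain $U(T)$, which is precisely the fourth bullet. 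This shows $A\mapsto U(A)$ lands in ${Sh}_{Poset}(U(S),U(T))$.

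It remains to invert the map, and this is where the real content lies. Given a poset shuffle $B$ of $U(S)$ with the chain $U(T)$, I would reconstruct a tree by applying $U^{-1}$, which requires first showing that the Hasse diagram of $B$ is itself a rooted tree, i.e.\ that $B$ has a single minimum and every element has at most one lower cover. This is the main obstacle: it is the poset-theoretic shadow of the tree compatibility built into conditions (2)--(4), and it must be deduced from the facts that $U(S)$ is tree-shaped (each element has a unique lower cover) and that $U(T)$ is a single chain threaded through $B$. Concretely, two maximal chains of $B$ that agree near their top must come from two branches of $A$ sharing a leaf, hence must share their entire common ancestry, and the totally ordered chain $U(T)$ forbids the inserted $T$-points from creating new downward branching; making this precise shows that the overlap structure of the maximal chains of $B$ is governed by the branching of $U(S)$ alone, so $B$ is tree-shaped. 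Once $B=U(A)$ for a reduced tree $A$, running the condition checks of the previous paragraph in reverse shows $A\in {Sh}_{Graph}(S,T)$, and the two assignments are mutually inverse because $U$ is an isomorphism. I expect the bookkeeping of this overlap and gluing data --- equivalently, the equivalence between branch compatibility in $A$ and chain-overlap compatibility in $B$ --- to be the only genuinely delicate step, the condition-by-condition verification being routine.
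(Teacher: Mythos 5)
Your proposal takes essentially the same route as the paper: the published proof also transports a graph shuffle through the isomorphism $\phi=U$ from reduced trees to posets and observes that the fourth condition in the definition of graph shuffles corresponds exactly to the fourth condition for poset shuffles while the remaining conditions are preserved. The only difference is thoroughness: the paper stops after this forward verification, appealing to $\phi$ being an isomorphism, whereas you additionally flag and sketch the surjectivity direction (that any poset shuffle of $U(S)$ with a chain has a tree-shaped Hasse diagram and hence lies in the image of $U$), a point the paper's one-paragraph proof leaves implicit.
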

\begin{proof}
Let $P\in {Sh}_{Graph}(S,T)$ and $\phi$ the isomorphism from reduced trees to a subset of posets. 

Now $P$ is also a reduced tree. And since $\phi$ is an isomorphism, it is enough to show that $\phi(P)$ is a shuffle of posets. Note that $\phi$ transforms the $4^{th}$ condition on the definition of shuffle of graphs into the $4^{th}$ condition on the definition of shuffles of trees, and preserves the other definitions.
\end{proof}

\begin{example}
Consider the tree $R$ in \cite[Example 3.6]{ShuffleTree}, using Equation~\ref{eqn:U} and Lemma~\ref{lemma:ch} we obtain:

\begin{eqnarray*}
    \mathcal{SH}\Big(\widetilde{\{c<d\}}\big(\left(\widetilde{\{c,d\}}(\chain[2],\chain[2])\right),\chain[1]\big)\Big)&=&\widetilde{\{c<d\}}\big(\left(\widetilde{\{c,d\}}(\mathcal{SH}(2),\mathcal{SH}(2))\right),\mathcal{SH}(1)\big)\\
    &=&\widetilde{\{c<d\}}\big(\mathcal{SH}(2)-6\mathcal{SH}(3)+6\mathcal{SH}(4),\mathcal{SH}(1)\big)\\
    &=&\mathcal{SH}(3)-6\mathcal{SH}(4)+6\mathcal{SH}(5)\\
    &=&\sum_{n=1}^\infty {n+3\choose 3}-6{n+4\choose 4}+6{n+5\choose 5} x^n
\end{eqnarray*}
Mathematical software computes: ${n+3\choose 3}-6{n+4\choose 4}+6{n+5\choose 5}=\sum_{k=0}^n{k+2\choose 2}^2$. The right-hand side is the value computed on the aforementioned paper.
\end{example}

\begin{example}

Calculate the Shuffle series of the poset associated to the
tree in Figure~\ref{fig:rt} assuming on the corresponding poset that $a$ is the minimum. The factorization of the poset is in Figure~\ref{Fig:Poset Tree}.
\begin{figure}
    \centering
    \includegraphics{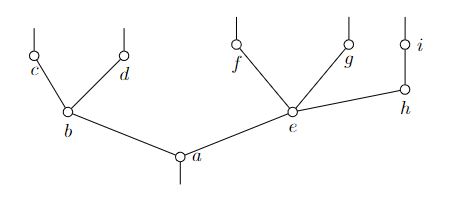}
    \caption{Reduced tree.}
    \label{fig:rt}
\end{figure}

\begin{figure}[h]\centering
  {\begin{tikzpicture}[xscale=1.3,yscale=1.3]
 \draw[step=1.0,black,thin] (-7,-3);
\draw[blue,fill] (-6.5,2.5) circle [radius=1pt]
node[left] {$c$};
\draw[color=green] (-6.35,2.4)  to (-5.55,1.6);

\draw[blue,fill] (-5.0,2.5) circle [radius=1pt]
node[right] {$d$};
\draw[color=green] (-5.0,2.4)  to (-5.4,1.6);

\draw[blue,fill] (-5.5,1.5) circle [radius=1pt]
node[left] {${\{c,d\}}$};
\draw[color=green] (-5.4,1.4)  to (-4.6,0.1);

\draw[blue,fill] (-4.5,0.0) circle [radius=1pt] node[left] {${\{d>c\}}$} ; 

\draw[blue,fill] (-3.5,1.5) circle [radius=1pt] node[left] {$b$} ; 
\draw[color=green] (-3.55,1.4)  to  (-4.4,0.1);

\draw[blue,fill] (-2.5,-2.1) circle [radius=1pt] node[left] {${\{c,d\}}$} ; 
\draw (-2.6,-2.0)  to  (-4.4,-0.1); 

\draw[color=magenta] (-2.4,-2.1)  to  (-0.1,-1.1); 

\draw[blue,fill] (0,-3.0) circle [radius=1pt] node[below] {${\{d>c\}}$} ;
\draw (-0.1,-2.95)  to  (-2.4,-2.2); 

\draw[blue,fill] (2.5,-2.0) circle [radius=1pt] node[right] {$a$} ; \draw (0.1,-3.0)  to  (2.35,-2.1); 

\draw[blue,fill] (-.5,2.5) circle [radius=1pt] node[right] {$g$};
\draw[blue,fill] (-1.5,1.5) circle [radius=1pt]
node[left] {${\{c,d\}}$};
\node[left] at (-0.5,0.5) {};

\draw[blue,fill] (2.5,2.5) circle [radius=1pt]
node[right] {$h$};

\draw[color=magenta] (1.5,1.5)  to (2.4,2.4);

\draw[blue,fill] (0,0) circle [radius=1pt]
node[right] {${\{c,d\}}$};
\draw[color=magenta] (0.1,0.0)  to  (1.3,1.35);
\draw[blue,fill] (1.39,1.4) circle [radius=1pt] node[left]{${\{d>c\}}$};

\draw[blue,fill] (3.0,0.65) circle [radius=1pt] node[above]{$e$};
\draw[color=magenta] (2.85,0.6)  to  (0.15,-1);

\draw[blue,fill] (0,-1) circle [radius=1pt] node[left]{${\{d>c\}}$};
\node[left] at (0.1,-0.5) {};
\draw[blue,fill] (1.39,1.4) circle [radius=1pt];
\node[left] at (2.5,-0.5) {};
\draw[blue,fill] (0.2,2.5) circle [radius=1pt]node[left]{$i$};
\draw[color=magenta] (0.3,2.4)  to  (1.35,1.5);

\draw[color=magenta] (-1.4,1.4)  to  (-.1,0.1); 
\draw[color=magenta] (0,-.1)  to  (0,-.9);
\draw[blue,fill] (-2.5,2.5) circle [radius=1pt] node[left] {$f$};

\draw[color=magenta] (-1.4,1.6)  to  (-.6,2.4);
\draw[color=magenta] (-1.6,1.6) to  (-2.4,2.4);

\end{tikzpicture}}
\caption{Factorization of the poset of edges of the tree in Figure~\ref{fig:rt}.\label{Fig:Poset Tree}
}
\end{figure}

 Since the action of $\widetilde{\{c<d\}}$ is commutative, for convenience we will write $\widetilde{\{d>c\}}$ in this Example.
The corresponding generating series is
\begin{align}
    \widetilde{\{d>c\}} \bigg[
                \widetilde{\{c,d\}}& \bigg\{ { 
                             \widetilde{\{d>c\}} \bigg(
                                         \widetilde{\{c,d\}}\big(
                                                       \mathcal{SH}(c), \mathcal{SH}(d)
                                                \big), \mathcal{SH}(b)
                                    \bigg),}
                    \widetilde{\{d>c\}} \bigg(\
                                            \widetilde{\{c,d\}}
                                            \notag\\
                                                    \big(
                                                        \widetilde{\{c,d\}}\left(
                                                                    \mathcal{SH}(f),\mathcal{SH}(g)
                                                               \right),
                                                    \big.
                                        \bigg.
                          \bigg.  
            \bigg.                                   
    & \bigg.
         \bigg.
             \bigg.{\widetilde{\{d>c\}}\left(
                               \mathcal{SH}(i), \mathcal{SH}(h)
                          \right),\mathcal{SH}(e)
            \bigg)}
        \bigg\},\mathcal{SH}(a)
    \bigg], \notag
\end{align}
 Here $a,b,c,d,e,f,g,h,i$ are copies of the poset $\chain[1]$. Using Equation~\ref{eqn:U} and Lemma~\ref{lemma:ch}, the evaluation along the green branch in Figure~\ref{Fig:Poset Tree} returns
\begin{align}
    {\widetilde{\{d>c\}}
        \big(\widetilde{\{c,d\}}
        \big.
    }&{
        \big.
            \left(\mathcal{SH}(1), \mathcal{SH}(1)
            \right)
        \big),
    \mathcal{SH}(1))}=  \widetilde{\{d>c\}}
        \bigg(\sum_{r=0}^{1}(-1)^{1+r}{1+r \choose 1}{1 \choose r} \mathcal{SH}(1+r),\mathcal{SH}(1)
        \bigg)
    \notag \\ 
     =& \widetilde{\{d>c\}} 
        \bigg(2 \mathcal{SH}(2)-\mathcal{SH}(1), \mathcal{SH}(1)
        \bigg) \notag \\
     =& 2 \mathcal{SH}(3)-\mathcal{SH}(2) \label{sh:11}
\end{align}
Now, the evaluation along the magenta branch in Figure~\ref{Fig:Poset Tree} returns:
\begin{equation}
{ \widetilde{\{d>c\}}\bigg\{\widetilde{\{c,d\}}
               \bigg(
                    \widetilde{\{c,d\}}\left(
                           \mathcal{SH}(1),\mathcal{SH}(1)
                           \right),  \widetilde{\{d>c\}}
                           \left(\mathcal{SH}(1), \mathcal{SH}(1)
                           \right)
                \bigg),\mathcal{SH}(1)
        \bigg\}} \label{sh:1}
\end{equation}

Using $
\widetilde{\{c,d\}} \left(\mathcal{SH}(1), \mathcal{SH}(1)\right) 
= 2 \mathcal{SH}(2)-\mathcal{SH}(1),$ and 
$\widetilde{\{d>c\}}\left(\mathcal{SH}(1), \mathcal{SH}(1)\right)= \mathcal{SH}(2) \label{sh:3}$  in Equation~\eqref{sh:1}, we have 
\begin{align}
    \widetilde{\{d>c\}}
    \bigg\{
        \widetilde{\{c,d\}}&
        \bigg(
            \widetilde{\{c,d\}}
            \left(
                \mathcal{SH}(1),\mathcal{SH}(1)
            \right), 
                \widetilde{\{d>c\}}
                \left(
                    \mathcal{SH}(1), \mathcal{SH}(1)
                \right)
        \bigg),\mathcal{SH}(1)
    \bigg\}
    \notag \\ 
    =& \widetilde{\{d>c\}}\bigg\{\widetilde{\{c,d\}}\bigg(\left(2\mathcal{SH}(2)-\mathcal{SH}(1), \mathcal{SH}(2)\right)\bigg), \mathcal{SH}(1)\bigg\} \notag  \\
    =& \widetilde{\{d>c\}}\bigg\{\bigg(\widetilde{\{c,d\}}\left(2\mathcal{SH}(2),\mathcal{SH}(2)\right)-\widetilde{\{c,d\}} \left( \mathcal{SH}(2), \mathcal{SH}(1)\right)\bigg), \mathcal{SH}(1) \bigg\} \notag \\
   =& \widetilde{\{d>c\}} \bigg(\left(12\mathcal{SH}(4)-12\mathcal{SH}(3)+2 \mathcal{SH}(2)-3 \mathcal{SH}(3)+2 \mathcal{SH}(2)\right), \mathcal{SH}(1)\bigg) \notag \\
  =& \widetilde{\{d>c\}} \bigg(\left(12 \mathcal{SH}(4)-15 \mathcal{SH}(3)+4 \mathcal{SH}(2)\right),  \mathcal{SH}(1)\bigg)\notag \\
  = & \widetilde{\{d>c\}} \left(12\mathcal{SH}(4), \mathcal{SH}(1)\right)-\widetilde{\{d>c\}}\left(15\mathcal{SH}(3), \mathcal{SH}(1)\right)+\widetilde{\{d>c\}}\left(4 \mathcal{SH}(2), \mathcal{SH}(1)\right)\notag \\
=& 12 \mathcal{SH}(5)-15 \mathcal{SH}(4)+4 \mathcal{SH}(3) . \label{sh:4}
\end{align}

Now, apply $\widetilde{\{c,d\}}$ on \eqref{sh:4} and \eqref{sh:11}
\begin{align}
\widetilde{\{c,d\}}
\bigg\{
    { \widetilde{\{d>c\}}} &{ 
        \bigg(
            \widetilde{\{c,d\}}
            \left(
                \mathcal{SH}(c), \mathcal{SH}(d)
            \right), \mathcal{SH}(b)
        \bigg)
            }
    ,  
    {
            \widetilde{\{d>c\}}
        \bigg(
            \widetilde{\{c,d\}}
            \big(
                \widetilde{\{c,d\}}
                \left(
                    \mathcal{SH}(f), \mathcal{SH}(g)
                \right)
    ,} \notag
    \\   
&     
{
    {
    \widetilde{\{d>c\}}
    \left(
        \mathcal{SH}(i), \mathcal{SH}(h)
    \right)
    \big), \mathcal{SH}(e)
    }
}
\bigg\}\notag \\
 =&\widetilde{\{c,d\}} 
    \bigg(
        {{2\mathcal{SH}(3)-\mathcal{SH}(2)}
        }, 
        {{12\mathcal{SH}(5)-15\mathcal{SH}(4)+4\mathcal{SH}(3)}
        }
    \bigg) \notag \\
=& 24 
    \widetilde{\{c,d\}}
    (\mathcal{SH}(5), \mathcal{SH}(3)
    )-30 
    \widetilde{\{c,d\}}
    (\mathcal{SH}(4),\mathcal{SH}(3)
    )+8
    \widetilde{\{c,d\}}
        (\mathcal{SH}(3),\mathcal{SH}(3)
        )
        -\notag \\
& 12\widetilde{\{c,d\}}
    (\mathcal{SH}(5), \mathcal{SH}(2)
    )+15
    \widetilde{\{c,d\}}
    (\mathcal{SH}(4),\mathcal{SH}(2)
    )-4
    \widetilde{\{c,d\}}
    (\mathcal{SH}(3), \mathcal{SH}(2)
    ) \notag \\
=& (1344\mathcal{SH}(8)-2625\mathcal{SH}(7)+1440\mathcal{SH}(6)-240\mathcal{SH}(5)
   )-
   (1050\mathcal{SH}(7)-1800\notag \\ 
   & \mathcal{SH}(6)+900\mathcal{SH}(5)-120\mathcal{SH}(4)
   )+
   (160\mathcal{SH}(6)-240\mathcal{SH}(5)+96\mathcal{SH}(4)-\notag \\ 
   & 8\mathcal{SH}(3)
   )-
   (252\mathcal{SH}(7)-360\mathcal{SH}(6)+120\mathcal{SH}(5)
   )+
   (225\mathcal{SH}(6)-300\mathcal{SH}(5)+\notag \\
   & 90\mathcal{SH}(4)
   )-(40\mathcal{SH}(5)-32\mathcal{SH}(4)+12\mathcal{SH}(3))\notag \\
= &  1344\mathcal{SH}(8)-3927\mathcal{SH}(7)+3985\mathcal{SH}(6)-1840\mathcal{SH}(5)+338\mathcal{SH}(4)-20\mathcal{SH}(3). \label{sh:5} 
\end{align}

Finally, we replace \eqref{sh:5} in the following equation
\begin{align}
    \widetilde{\{d>c\}} \bigg[
                \widetilde{\{c,d\}}& \bigg\{ { 
                             \widetilde{\{d>c\}} \bigg(
                                         \widetilde{\{c,d\}}\big(
                                                       \mathcal{SH}(c), \mathcal{SH}(d)
                                                \big), \mathcal{SH}(b)
                                    \bigg),}
                    {\widetilde{\{d>c\}} \bigg(\
                                            \widetilde{\{c,d\}}\big(
                                                        \widetilde{\{c,d\}}\left(
                                                                    \mathcal{SH}(f),\mathcal{SH}(g)
                                                               \right),
                                                    \big.
                                        \bigg.}
                          \bigg.  
            \bigg.                                    \notag\\
    & \bigg.
         \bigg.
             \bigg.{\widetilde{\{d>c\}}\left(
                               \mathcal{SH}(i), \mathcal{SH}(h)
                          \right),\mathcal{SH}(e)
            \bigg)}
        \bigg\},\mathcal{SH}(a)
    \bigg] \notag 
\end{align}
\begin{align}
\widetilde{\{d>c\}}&\bigg[
             1344\mathcal{SH}(8)-3927\mathcal{SH}(7)+3985\mathcal{SH}(6)-1840\mathcal{SH}(5)+338\mathcal{SH}(4)-20\mathcal{SH}(3),\mathcal{SH}(1)
        \bigg] \notag \\
=& 1344\mathcal{SH}(9)-3927\mathcal{SH}(8)+3985\mathcal{SH}(7)-1840\mathcal{SH}(6)+338\mathcal{SH}(5)-20\mathcal{SH}(4). \label{sh:6}
\end{align}
\end{example}

\section{The three algebras associated to an operation of posets}
\label{Sec:Second}

In algebraic topology, we replace a topological space $P$ with a ring $H^\ast(P)$ in order to use algebra and compute properties of the original space. Similarly, we start with a poset $P$ which we replace by the series  $\sum_{n=0}^\infty \#\hom_C(P,\chain[n])x^n$ where $C$ is some binary function we aim to study. We will show that if $C$ satisfies conditions $C_1$ and $C_{{2}}$, then to every poset we can associate three series. We think of these three series as shadows of posets, in the sense that the assignment poset to power series loses information, for example,{ in general, it is not injective and some noncommutative endomorphisms become commutative}. We also show that the operad of finite posets, which acts on posets, still acts on these three shadows of posets.

\subsection{The triple of order series}

\label{sec:new} A strict order-preserving map $f:P\rightarrow Q$ is a function that sends pairs $x,y\in P$ with $x<_P y$ to pairs satisfying $f(x)<_Q f(y)$. Stanley introduced the strict order polynomial $\somega[n]{P}$ (see Section~\ref{sec:exp}), it counts strict order-preserving maps from $P$ to $\chain[n]$ and we consider it as a polynomial on the variable $n$. Strict order series  $\zetaf[P]=\sum_{n=1}^\infty \somega[n]{P}x^n$ were introduced in Definition~\ref{series:strict}.

Weak order-preserving maps are poset maps with the requirement $x\leq y$ is sent to $f(x)\leq f(y)$.
 Stanley also introduced the weak polynomial $\nsomega[n]{P}$ which counts weak order-preserving maps from $P$ to $\chain[n]$. 

On strict order series, we work with the operad of finite posets $\mathcal{FP}$, elements in $\mathcal{FP}(n)$ are trees with $n$ leaves whose vertices with $k$ children are labeled by an indecomposable poset with $k$ points. 
Pick an element $\widetilde{Q}$ of the operad $ \mathcal{FP}(n)$. We pick a planar structure on the factorization of $\widetilde{Q}$, an ordering of the leaves, in order to evaluate $\widetilde{Q}$ on an $\mathcal{FP}$-algebra $A$.

Strict order series admit the structure of algebra over the operad $\mathcal{FP}$ (see Theorem~2.4 of \cite{APV}). In contrast, while we can define an action of the operad $\mathcal{FP}$ on shuffles series (as in Section~\ref{sec:exp}), we can only compute the action of the operad of $\mathcal{SP}$ on shuffle series. The issue is that we do not have an explicit formula for $\widetilde{\{x<y>z<w\}}$ and other operations defined by non series-parallel posets.  See Section~\ref{Sec:Third} for more details.

The papers~\cite{neg, LoebBin} by Loeb introduce sets with negative elements, see also~\cite{ newshuffle, SA, hybrid, gaussian}, \cite{NT} and \cite{SB} for further development and applications of the theory. The binomial symbol ${n\choose k}$ has six meanings based on the region in which the coefficients $n,k$ are located. In the {T}able~\ref{tab:my_label} we show the three regions in which the binomial returns an integer. 
On each region, the binomial counts the number of subsets (perhaps with negative elements) out of a given set (perhaps with negative elements).

\begin{table}[htb] \caption{Three interpretations of the binomial coefficient}\label{tab:my_label}%
\begin{tabular}{@{}cc@{}}
    Value& Range of parameters\\ 
        ${n\choose k}$ & $ n\geq k\geq 0$ \\
        $(-1)^k\multiset{-n}{k}$ & $ k\geq 0 > n$ \\
        $(-1)^{n+k}\multiset{-n}{-(k-n)}$ & $ 0> n\geq k$ \\
\end{tabular}\end{table}

Stanley associated to a poset $P$ a polynomial with integer values $\somega[x]{P}=\sum_{i=1}^{|P|} d_{P,i} {x\choose i}$ were ${x\choose i}=\frac{x(x-1)\cdots(x-(i-1))}{i!}, d_{P,i}\in\mathbb{Z}$. From Loeb's result it follows that the 
polynomials:
$$\somega[x]{P}=\sum_{i=1}^{|P|} d_{P,i} {x\choose i},\sum_{i=1}^{|P|} (-1)^{|P|-i}d_{P,i} \multiset{x}{i},$$ 

have all rational coefficients and integer values. 
Note that the second polynomial is $(-1)^{|P|}\somega[-x]{P}$ due to  $(-1)^b{-a\choose b}=\multiset{a}{b}$.

\begin{theorem*}[\cite{ crtbook,beginning,crt}]The polynomial $(-1)^{|P|}\somega[-x]{P}$ coincides with $\nsomega[x]{P}$, in other words, it counts weak order-preserving maps from $P$ to $\chain[n]$. 
\end{theorem*}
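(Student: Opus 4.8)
The plan is to reduce the statement to the binomial reciprocity $(-1)^{b}\binom{-a}{b}=\multiset{a}{b}$ recorded in Table~\ref{tab:my_label}, applied term by term to an expansion of both order polynomials over the linear extensions of $P$. The base case already makes this transparent: if $P=\chain[p]$ is a chain, then $\somega[n]{P}=\binom{n}{p}$ and $\nsomega[n]{P}=\multiset{n}{p}$, so the asserted identity $(-1)^{p}\somega[-n]{P}=\nsomega[n]{P}$ is literally $(-1)^{p}\binom{-n}{p}=\multiset{n}{p}$. The task is therefore to promote this chain identity to an arbitrary poset by decomposing order-preserving maps into chains.

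First I would fix a natural labeling of $P$ (a reference linear extension, identifying the underlying set with $\{1,\dots,p\}$, where $p=|P|$) and invoke the fundamental theorem of $P$-partitions: every weakly (respectively strictly) order-preserving map $P\to\chain[n]$ is obtained uniquely by choosing a linear extension $L\in\mathcal{L}(P)$ and then a monotone labeling of the chain $L\cong\chain[p]$ whose forced strict steps occur exactly at the descents of $L$ relative to the reference labeling. Counting the monotone labelings for a fixed $L$ with $d=\mathrm{des}(L)$ descents is then a one-variable problem, and gives the polynomial identities
\begin{equation*}
\nsomega[n]{P}=\sum_{L\in\mathcal{L}(P)}\multiset{n-\mathrm{des}(L)}{p},\qquad \somega[n]{P}=\sum_{L\in\mathcal{L}(P)}\binom{n+\mathrm{des}(L)}{p}.
\end{equation*}
As a sanity check, for an antichain both sums collapse to $n^{p}$ via Worpitzky's identity, and for a chain they reduce to the base case above.

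With these two expansions in hand, the reciprocity follows by substituting $n\mapsto -n$ in the strict sum (legitimate, since the expansion is an identity of polynomials in $n$) and applying the Table~\ref{tab:my_label} identity inside each summand: writing $\binom{-n+\mathrm{des}(L)}{p}=\binom{-(n-\mathrm{des}(L))}{p}=(-1)^{p}\multiset{n-\mathrm{des}(L)}{p}$ yields
\begin{equation*}
(-1)^{p}\somega[-n]{P}=\sum_{L\in\mathcal{L}(P)}\multiset{n-\mathrm{des}(L)}{p}=\nsomega[n]{P},
\end{equation*}
which is the claim. The main obstacle is the $P$-partition decomposition itself: one must check that sorting the fibres of an order-preserving map by value, breaking ties by the reference labeling, produces a well-defined linear extension, and that the strict steps are forced at precisely the descents. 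This bookkeeping is the only nontrivial ingredient, the remainder being the termwise binomial reciprocity the paper has already isolated in its rewriting of $(-1)^{|P|}\somega[-x]{P}$ in the $\multiset{x}{i}$ basis.

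An alternative I would keep in reserve, better aligned with the operadic viewpoint of this paper, is to prove the identity by induction along a factorization when $P$ is series-parallel: verify it for the point, and check that both $\widetilde{\{c,d\}}$ (the Hadamard-type operation) and $\widetilde{\{c<d\}}$ (ordinal sum) intertwine the substitution $x\mapsto -x$ with the sign $(-1)^{|P|}$. The fully general case, however, genuinely requires the $P$-partition argument above, or equivalently Ehrhart--Macdonald reciprocity applied to the order polytope of $P$.
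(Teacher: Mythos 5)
The paper does not actually prove this statement: it is imported as Stanley's combinatorial reciprocity theorem, with the proof delegated to the citations \cite{beginning,crtbook,crt}, so the only meaningful comparison is with those sources and with the machinery the paper builds around the result. Your argument is correct, and it is essentially Stanley's original proof from \cite{beginning}: the two linear-extension expansions $\nsomega[n]{P}=\sum_{L\in\mathcal{L}(P)}\multiset{n-\mathrm{des}(L)}{p}$ and $\somega[n]{P}=\sum_{L\in\mathcal{L}(P)}\binom{n+\mathrm{des}(L)}{p}$ are the fundamental theorem of $P$-partitions for a naturally labeled poset, and reciprocity then reduces, term by term in $L$, to the identity $(-1)^{p}\binom{-m}{p}=\multiset{m}{p}$ that the paper isolates in Table~\ref{tab:my_label}. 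The bookkeeping you flag is genuine but standard; naturality of the reference labeling is exactly what makes the value-sorted sequence a linear extension. One wording slip is worth fixing: in the \emph{strict} decomposition the forced strict steps occur at the \emph{ascents} of $L$ (ties broken by \emph{decreasing} reference label), not at the descents --- descent-based tie-breaking is the weak case. Your displayed formulas are nevertheless the right ones, since $\binom{n+\mathrm{des}(L)}{p}=\multiset{n-\mathrm{asc}(L)}{p}$ with $\mathrm{asc}(L)=p-1-\mathrm{des}(L)$, and both check out on chains and antichains as you note.

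For comparison, the route most consonant with this paper's own framework (and the one taken in \cite{crtbook}) is geometric: expand $\somega[x]{P}=\sum_i d_{P,i}\binom{x}{i}$, where by Lemma~\ref{IEP} the coefficients $d_{P,i}$ count simplices in the canonical triangulation of $Poly(P)$ (equivalently, strict surjective maps), substitute $x\mapsto -x$ termwise via the same binomial reciprocity, and identify the result with $\nsomega[x]{P}$ by Ehrhart--Macdonald reciprocity --- precisely the dictionary Section~\ref{Ehrhart:theory} sets up with $\#\mathcal{L}_k\cap nPoly(P)=\nsomega[n+1]{P}$ and $\#\mathcal{L}_k\cap nPoly(P)^{\circ}=\somega[n+1]{P}$. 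Your $P$-partition proof buys a self-contained combinatorial argument in which a single statistic $\mathrm{des}(L)$ governs both expansions; the polytope route buys compatibility with the paper's triangulation and vector $(d_{P,1},\dots,d_{P,|P|})$ viewpoint. Your reserve plan is rightly kept in reserve: the series-parallel induction cannot reach the general statement, since, as the paper stresses in Section~\ref{Sect6}, indecomposable posets such as $N=\{x<y>z<w\}$ are not generated by $\widetilde{\{c,d\}}$ and $\widetilde{\{c<d\}}$, whereas the theorem is asserted for all finite posets.
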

This result is known as Stanley combinatorial reciprocity theorem.

 On the region $0>n\geq k$, the third interpretation of the binomial coefficient $\multiset{-n}{-(k-n)}={-k-1\choose -n-1}$ is not a polynomial on the variable $n$. Let $m=-n, i=-k$, since $\sum_{m=1}^{i}(-1)^{m-i}\multiset{m}{i-m}x^m=\sum_{m=1}^{i}(-1)^{m-i}{i-1\choose m-1} x^m=x(x-1)^{i-1}$, we can proceed formally at the level of order series. 

One computes 
\begin{equation}
    \zetaf[P]=\sum_{n=0}^\infty \somega[n]{P}x^n=\sum_{i=1}^{|P|} d_{P,i} \frac{x^i}{(1-x)^{i+1}}.\label{eqn:strict}
\end{equation}
\begin{definition}
    Given a poset $P$, we define the  weak and surjective weak order series:

\begin{equation}
 \zetafp[P]:=\sum_{n=0}^\infty \nsomega[n]{P}x^n=\sum_{i=1}^{|P|} (-1)^{|P|-i}d_{P,i} \frac{x}{(1-x)^{i+1}}, \label{eqn:weak}
 \end{equation}
 \begin{equation}\zetafs[P]:=\sum_{i=1}^{|P|}(-1)^{|P|-i} d_{P,i} x(1-x)^{i-1}.\label{Eqn:3series}  
\end{equation}
\end{definition}

Here we used the identities $\sum_{n=k}^\infty {n\choose k}x^n=\frac{x^k}{(1-x)^{k+1}}$ and 
$\sum_{n=1}^\infty \multiset{n}{k}x^n=\frac{x}{(1-x)^{k+1}}$ from~{\cite[Equation~(1.5.5)]{gf}, \cite[Equation~(1.3)]{lcomb} or \cite[Equation~(1.3)]{eulerianB}}. A combinatorial interpretation of the series $\zetafs[P]$ will be provided in Corollary~\ref{Cor:basis}.

\subsection{Ehrhart theory}
\label{Ehrhart:theory}
Let $P$ be a finite poset. The order polytope of $P$  $Poly(P)$ is defined as $$\{f:P\rightarrow [0,1]| f(x)\leq f(y)\hbox{ if }x\leq_P y\}.$$ For example if  $P={\{c<d\}}$ then $Poly(P)=\{(x,y)|0\leq x\leq y\leq 1\}$, it follows that $Poly(\chain[n])=\Delta_n$.

From now on we will use Ehrhart's theory point of view, see \cite{computingd, crtbook}. For any $k$ we consider the $k$-simplex $\Delta_k=\{(x_1,\cdots,x_k)|0\leq  x_1\leq\cdots\leq x_k\leq 1\}$. Denote by $\Delta_k^\circ$ the interior of the simplex. Given $n\in\mathbb{N}$ let $n\Delta_k$ be the expansion of the simplex by $n$ with vertices $\{(0,\cdots,0),(0,\cdots,0,n),\cdots, (n,\cdots,n)\}$. We denote by
$\mathcal{L}_k$ the lattice of  coordinates with natural indices in $\mathbb{R}^k$.

How many elements are in $\mathcal{L}_k\cap n\Delta_k$? Ehrhart proved that for any poset $P$, with order polytope $Poly(P)$ and number of points $|P|=k$, there is a polynomial $E(P)$ on the variable $n$ that counts lattice points on the $n$ expansion of $Poly(P)$, that is $E(P)(n)=\#\mathcal{L}_k\cap nPoly(P)$. Counting lattice points inside of an order polytope is related to the enumeration of maps of posets. To clarify this relation, note that the geometric object $Poly(P)$ is assumed to be inside $\mathbb{R}^k$, which has one coordinate per point in $P$. We then label the coordinates with the points of $P=(\{p_1,\cdots,p_k\},\leq_P)$ and the object $Poly(P)$ is the subset of the unit hyper-cube in $\prod_{i=1}^k\mathbb{R}<p_i>$ where the coordinates satisfy the inequalities of the poset $P$. Consider the set of integer coordinates inside $nPoly(P)$. Every such coordinate $(x_{p_1},\cdots,x_{p_n})$ defines a function with integer values $f:P\rightarrow \{0,1,\cdots, n\}$ given by $f(p_i)=x_{p_i}$. We then think of $f$ as a function $P\rightarrow \chain[n+1]$, and by the definition of $Poly(P)$ the function preserves the order. Then $\#\mathcal{L}_k\cap nPoly(P)\leq\nsomega[n+1]{P}$. On the other hand, every $f\in \nsomega[n+1]{P}$ defines a coordinate $(f(p_1)-1,\cdots,f(p_k)-1)$ and this assignment is injective as we have as many coordinates as points in the poset, then $\#\mathcal{L}_k\cap nPoly(P)=\nsomega[n+1]{P}$. In this case, we use the interpretation of the symbol  ${n\choose k}=(-1)^k\multiset{-n}{k}$ as counting subsets out of a negative subset, which allows for repetitions of elements.

Given a poset $P$, and a subset $I\subset P$, we say that $I$ is a lower set if $y\in I$ and $x\leq y$ implies $x\in I$. The poset $J(P)$ is the poset of lower sets ordered by inclusion.  
\begin{definition}\label{Def:J}
If $K=\{I_1\subset\cdots\subset I_k\}$ is a chain of lower sets with strict inclusions,  define 
\[F_K=\Bigg\{f:P\rightarrow \mathbb{R}:\begin{cases}
    {a)\, } f \hbox{ is constant on the subsets } I_1, I_2\setminus I_1,\cdots,I_k\setminus I_{k-1}, P\setminus I_k,\\
    {b)\, }0=f(I_1)\leq f(I_2)\leq \cdots\leq f(I_k)\leq f(P\setminus I_k)=1. \end{cases}\Bigg\}\]
    
Then, the canonical triangulation~~\cite[Section 5]{two} of the order polytope of $P$ is given by the simplicial complex $\{F_K|K\in J(P)\}$.\end{definition}

In order theory we aim to understand how an object is enriched by assigning an order to the object. In the case of order polytopes, they are polytopes equipped with a canonical triangulation. 

The following Lemma is proved in Lemma 2.3 of \cite{APV}.

\begin{lemma}[Inclusion Exclusion]\label{IEP} Let $P$ be a finite poset, $n=|P|$. If the canonical triangulation of the order polytope of $P$ is given by
 \[Poly(P)=\sqcup_{d_{P,|P|}} \Delta_{|P|}\setminus( \sqcup_{d_{P,|P|-1}} \Delta_{|P|-1})\sqcup ( \sqcup_{d_{P,|P|-2}} \Delta_{|P|-2})\setminus \cdots (\sqcup_{d_{P, 1}} \Delta_{1}),\]
then,  $$\zetaf[P]=\sum_{i=1}^n d_{P,i}\frac{x^i}{(1-x)^{i+1}},$$ 
\end{lemma}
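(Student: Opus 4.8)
The plan is to translate the combinatorial count $\somega[n]{P}$ into a lattice-point count inside the order polytope and then read the generating function directly off the canonical triangulation supplied in the hypothesis. The first thing I would record is the dictionary between strict order-preserving maps and \emph{interior} lattice points: a strict map $f\colon P\to\chain[n]$ is the same datum as an integer point in the relative interior of the dilate $(n+1)Poly(P)$, since the facet inequalities $0<f(x)$, $f(x)<1$ and $f(x)<f(y)$ for cover relations are exactly the strictness conditions. Thus $\somega[n]{P}=\#\big(\mathcal{L}_{k}\cap (n+1)Poly(P)^{\circ}\big)$ with $k=|P|$, which is the open-polytope analogue of the identity $\#\big(\mathcal{L}_{k}\cap nPoly(P)\big)=\nsomega[n+1]{P}$ already noted in the Ehrhart discussion.

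Next I would exploit the structure of the canonical triangulation. The key geometric input is that it is \emph{unimodular}: the cell $F_K$ attached to a chain $K$ of lower sets is spanned by the $0/1$ indicator points of the members of $K$, hence is lattice-equivalent to the standard simplex $\Delta_{\dim F_K}$. Each $i$-dimensional cell therefore contributes the standard open-simplex count $L_{\Delta_i^{\circ}}(n+1)=\binom{n}{i}$ of interior points. Because the relative interiors of the cells of a triangulation partition the polytope, the cells whose relative interior lies in $Poly(P)^{\circ}$ give a disjoint decomposition of the open order polytope into open simplices, and the hypothesis records that exactly $d_{P,i}$ of these interior cells are $i$-dimensional (this is the content of the inclusion--exclusion decomposition, as discussed below). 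Summing the contributions gives $\somega[n]{P}=\sum_{i=1}^{k} d_{P,i}\binom{n}{i}$.

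The last step is formal. Applying the identity $\sum_{n\ge i}\binom{n}{i}x^{n}=\frac{x^{i}}{(1-x)^{i+1}}$ quoted earlier, I would compute \[\zetaf[P]=\sum_{n\ge 1}\somega[n]{P}\,x^{n}=\sum_{i=1}^{k} d_{P,i}\sum_{n}\binom{n}{i}x^{n}=\sum_{i=1}^{k} d_{P,i}\frac{x^{i}}{(1-x)^{i+1}},\] which is the assertion. As a cross-check that engages the closed inclusion--exclusion exactly as written, one may instead expand $L_{Poly(P)}(n)=\sum_i(-1)^{k-i}d_{P,i}\binom{n+i}{i}$ from the closed simplices and apply Ehrhart--Macdonald reciprocity $L_{Poly(P)^{\circ}}(n)=(-1)^{k}L_{Poly(P)}(-n)$; the binomial identity $(-1)^{i}\binom{i-n}{i}=\binom{n-1}{i}$ collapses the signs and recovers $\somega[n-1]{P}=\sum_i d_{P,i}\binom{n-1}{i}$, confirming that the multiplicities of the geometric decomposition are indeed the binomial-basis coefficients of the strict order polynomial.

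The hard part will be the sign-and-interior bookkeeping in the middle paragraph: one must justify that the multiplicities $d_{P,i}$ of the closed inclusion--exclusion are the numbers of $i$-cells whose relative interior is interior to $Poly(P)$ (the shared faces), and not the full face numbers. Establishing this, together with the unimodularity that guarantees every cell contributes the clean binomial $\binom{n}{i}$ with no lattice-volume correction, is where the real content sits; once these two geometric facts are secured, the generating-function computation is immediate.
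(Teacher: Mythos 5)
Your proof is correct, and it realizes the approach the paper only gestures at: the paper does not prove this lemma in-text but defers to Lemma~2.3 of \cite{APV}, adding the one-line gloss that it ``follows from geometric arguments, we count points using the inclusion-exclusion property.'' Your argument is a self-contained version of exactly that plan, with the care the gloss omits: the closed inclusion--exclusion on unimodular simplices by itself gives only the weak count $\sum_i(-1)^{|P|-i}d_{P,i}\binom{n+i}{i}$, and you supply the bridge to the \emph{strict} series. One remark on your own bookkeeping: the step you flag as ``the hard part'' --- that $d_{P,i}$ equals the number of $i$-cells whose relative interior lies in $Poly(P)^{\circ}$ --- is never actually needed, because your Ehrhart--Macdonald ``cross-check'' derives $\somega[n]{P}=\sum_i d_{P,i}\binom{n}{i}$ directly from the closed decomposition in the hypothesis, after which the quoted identity $\sum_{n\ge i}\binom{n}{i}x^{n}=\frac{x^{i}}{(1-x)^{i+1}}$ finishes the lemma. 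So the cross-check \emph{is} the proof, and the interior-cell statement becomes a corollary (compare the two expansions of $\somega[n]{P}$ and use linear independence of the binomial basis) rather than a prerequisite; your presentation simply has the rigorous argument and the heuristic one in reversed roles. The remaining ingredients you invoke --- unimodularity of the canonical triangulation (vertices are $0/1$ indicator vectors of complements of lower sets) and the dictionary between strict maps and interior lattice points --- are standard and are the same facts the paper cites in its Ehrhart section.
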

\begin{definition}
We call vectors the sequence $(d_{P,1},\cdots,d_{P,|P|})$ defined in the Inclusion-Exclusion Lemma.
\end{definition}

The lemma follows from geometric arguments, we count points using the inclusion-exclusion property, that is, we are counting elements in subsets. 
\begin{definition}    
Given a poset $P$ and $n\in\mathbb{N},$ define $\hom_{Poset^s}(P,\chain[n])$ as the set of surjective weak poset maps.
\end{definition}

For example $\hom_{Poset^s}(\chain[3],\chain[2])=\{(1,2,3)\mapsto(1,1,2), (1,2,3)\mapsto(1,2,2)\}$. To define weakly surjective maps $\chain[k]\rightarrow\chain[n]$ we know the image must contain $1,\cdots, n$. Then $\multiset{n}{k-n}$ tells us in how many ways we can assign the remaining $k-n$ values. 
We compute $\sum_{n=1}^\infty(-1)^{n+k}\#\hom_{Poset^s}(\chain[k],\chain[n])x^n=\zetafs[k]$, where $\zetafs[k]$ is defined in \eqref{Eqn:3series}. 

Denote the interior of $nPoly(P)$ by $nPoly(P)^\circ$. It is shown in~\cite{MPoly} that the integer coordinates of $\mathcal{L}_k\cap nPoly(P)^\circ$, where $|P|=k$, correspond to strict order preserving maps, leading to $\#\mathcal{L}_k\cap nPoly(P)^\circ=\somega[n+1]{P}$. This case corresponds to the interpretation of the binomial coefficient as counting subsets out of a set.

In the same spirit, define $\mathcal{L}_{k,{ n}}$ as the set of coordinates in $\mathbb{Z}^k$ that include all digits from $0$ to $n$. For example $(0,0,1),(0,1,1)\in\mathcal{L}_{3,{ 1}}$.
We find $$\#\mathcal{L}_{k,n}\cap nPoly(P)=\#\hom_{Posets^s}(P,\chain[n+1]).$$

The action of the operad of finite posets on weak surjective series is of the form $$\widetilde{P}(\zetafs[P_1],\cdots,\zetafs[P_n])=\zetafs[\widetilde{P}(P_1,\cdots,P_n)].$$

The isomorphism from Loeb's work is compatible with the operadic structure of the series, the case of weak order series and strict order series is proved in Lemma 2.8 of  “A poset version of Ramanujan results on Eulerian numbers and zeta
values”.

\begin{corollary}[Operadic change of basis]
The change of basis $\{\frac{x}{(1-x)^{k+1}}\}\rightarrow \{{x}{(1-x)}^{k-1}\}$ is an operadic isomorphism of algebras over the operad $\mathcal{FP}$. \label{Cor:basis}
\end{corollary}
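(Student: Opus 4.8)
The plan is to exhibit $\Phi$, the linear map determined on basis elements by $\frac{x}{(1-x)^{k+1}}\mapsto x(1-x)^{k-1}$, as precisely the map that carries each weak order series $\zetafp[P]$ to the corresponding surjective weak order series $\zetafs[P]$, and then to deduce the operadic intertwining relation formally from the two transport-of-structure action formulas. The observation that drives everything is that, by the defining expansions \eqref{eqn:weak} and \eqref{Eqn:3series}, the series $\zetafp[P]$ and $\zetafs[P]$ carry the \emph{same} coordinate vector $\big((-1)^{|P|-i}d_{P,i}\big)_{1\le i\le|P|}$, merely expressed in the two families $\{\frac{x}{(1-x)^{i+1}}\}$ and $\{x(1-x)^{i-1}\}$. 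So $\Phi$ is the linear map realizing this reindexing, and $\Phi(\zetafp[P])=\zetafs[P]$ for every poset $P$.

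For this to be a genuine change of basis I would first check that both families are linearly independent in $\mathbb{Q}[[x]]$: for $\{x(1-x)^{i-1}\}_{i\ge 1}$ this is immediate from the strictly increasing degrees, and for $\{\frac{x}{(1-x)^{i+1}}\}_{i\ge 1}$ from the distinct pole orders at $x=1$. Then $\Phi$ is a well-defined linear isomorphism between the two spans, and applying it to the expansion \eqref{eqn:weak} of $\zetafp[P]$ returns exactly the expansion \eqref{Eqn:3series} of $\zetafs[P]$, confirming $\Phi(\zetafp[P])=\zetafs[P]$ unambiguously. In particular Doppelg\"anger posets, which by Lemma~\ref{IEP} share the vector $(d_{P,i})$, are treated consistently, so no conflict arises from the non-injectivity of $P\mapsto\zetafp[P]$.

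The operadic compatibility I would then establish as the chain of equalities
\begin{align*}
\Phi\big(\widetilde{Q}(\zetafp[P_1],\dots,\zetafp[P_n])\big)
&=\Phi\big(\zetafp[\widetilde{Q}(P_1,\dots,P_n)]\big)
=\zetafs[\widetilde{Q}(P_1,\dots,P_n)]\\
&=\widetilde{Q}(\zetafs[P_1],\dots,\zetafs[P_n])
=\widetilde{Q}(\Phi(\zetafp[P_1]),\dots,\Phi(\zetafp[P_n])),
\end{align*}
valid for every operation $\widetilde{Q}\in\mathcal{FP}(n)$ and posets $P_1,\dots,P_n$. The outer two equalities are the identity $\Phi\circ\zetafp=\zetafs$ of the previous paragraph, while the first and third inner equalities are exactly the two action formulas $\widetilde{Q}(\zetafp[P_1],\dots)=\zetafp[\widetilde{Q}(P_1,\dots)]$ and $\widetilde{Q}(\zetafs[P_1],\dots)=\zetafs[\widetilde{Q}(P_1,\dots)]$ recorded before the statement. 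Since every element of the weak order series algebra is of the form $\zetafp[P]$ and the algebra is closed under the $\mathcal{FP}$-action, this single chain shows $\Phi$ is an operadic morphism; combined with its bijectivity it is an operadic isomorphism.

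The main obstacle, and the only point requiring genuine care, is the well-definedness of the two $\mathcal{FP}$-actions that the argument takes as input. The action formulas are legitimate only once one knows, as in Lemma~\ref{Lemma:act} and the $\mathcal{FP}$-version underlying Theorem~2.4 of~\cite{APV}, that evaluating an operation on the series is independent of the chosen factorization and of the poset representative — which is exactly why the generating poset is tracked as part of the datum, sidestepping the non-injectivity caused by Doppelg\"angers. Once both actions are known to be well defined, the remainder is purely formal: $\Phi$ is forced on coordinates, and the two actions are defined by the identical transport of structure from the poset algebra, so the reindexing $\Phi$ cannot fail to commute with them.
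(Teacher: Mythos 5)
Your proposal is correct and follows essentially the same route as the paper, whose entire proof is the one-line observation that ``the action of the operad has the same vectors in both bases'': you simply make explicit what that sentence compresses, namely that $\zetafp[P]$ and $\zetafs[P]$ share the coordinate vector $\big((-1)^{|P|-i}d_{P,i}\big)_i$, that both $\mathcal{FP}$-actions are defined by the same transport of structure $\widetilde{Q}(\,\cdot\,)=Series(\widetilde{Q}(\cdots))$, and hence that the basis-swap map intertwines them. Your added checks (linear independence of the two families, well-definedness of the actions, and consistent treatment of Doppelg\"angers via the tracked poset label) are sound elaborations of hypotheses the paper uses implicitly, not a different argument.
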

\begin{proof}
It follows since the action of the operad has the same vectors in both bases.
\end{proof}
Loeb introduced a map at the level of polynomials that sends $k\rightarrow -k$ on the second index of the binomial coefficients. 
The change of basis is meant to extend {the map defined by Loeb} on polynomials to a map on generating series. 

\begin{remark}
If $\phi$ is the change of basis of Corollary~\ref{Cor:basis}, then
\begin{eqnarray}
  \widetilde{\{c,d\}}(\zetafs[P],\zetafs[Q])&=& \phi^{-1}\big(\widetilde{\{c,d\}}\left(\phi(\zetafs[P]),\phi(\zetafs[Q])\right)\big). 
\end{eqnarray} The action of $\widetilde{\{c,d\}}$ on the basis $\{\zetafs[i]\}_{i\in\mathbb{N}}$ is similar to the Hadamard product on $\{\zetafp[i]\}_{i\in\mathbb{N}}$.   
\end{remark}

Corollary~\ref{Cor:basis} describes the structure of a set of series, we now explain the combinatorial meaning of the third series. 
\begin{lemma}\label{Lemma:CM} Let $P$ be a finite poset, then $\zetafs[P]= \sum_{n=1}^\infty(-1)^{n+1}\#\hom_{Poset^s}(P,\chain[n])x^n$.
\end{lemma}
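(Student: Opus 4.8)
The plan is to reduce the Lemma to the formula $\zetafp[P]=\sum_{i=1}^{|P|}(-1)^{|P|-i}d_{P,i}\frac{x}{(1-x)^{i+1}}$ for the weak order series from Equation~\eqref{eqn:weak}, and to realise the surjective counting series as the image of $\zetafp[P]$ under an explicit, invertible substitution of generating functions. Write $G(x)=\sum_{n=1}^\infty \#\hom_{Poset^s}(P,\chain[n])x^n$, so that the assertion of the Lemma is exactly $\zetafs[P]=-G(-x)$.

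The key combinatorial input is a stratification of weak maps by the size of their image. Given a weak order-preserving map $f:P\rightarrow\chain[n]$ with image of size $j$, the image inherits a total order isomorphic to $\chain[j]$, so $f$ is recorded by a surjective weak map $P\rightarrow\chain[j]$ together with the choice of the $j$-element image inside $\{1,\dots,n\}$, which can be made in $\binom{n}{j}$ ways. This yields $\nsomega[n]{P}=\sum_{j=1}^n\binom{n}{j}\#\hom_{Poset^s}(P,\chain[j])$, valid for every $n$ once the $n=0$ terms are set to zero.

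I would then translate this binomial transform into generating functions. Using the identity $\frac{x^j}{(1-x)^{j+1}}=\sum_{n\ge j}\binom{n}{j}x^n$ already invoked in the paper, the stratification is equivalent to $\zetafp[P](x)=\frac{1}{1-x}\,G\!\left(\frac{x}{1-x}\right)$. The substitution $x\mapsto\frac{x}{1-x}$ has vanishing constant term and is formally invertible with inverse $y\mapsto\frac{y}{1+y}$, so solving for $G$ gives $G(y)=\frac{1}{1+y}\,\zetafp[P]\!\left(\frac{y}{1+y}\right)$. Substituting the explicit expression for $\zetafp[P]$ and using $\left.\frac{x}{(1-x)^{i+1}}\right|_{x=y/(1+y)}=y(1+y)^{i}$ collapses the sum to $G(y)=\sum_{i=1}^{|P|}(-1)^{|P|-i}d_{P,i}\,y(1+y)^{i-1}$. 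Then $-G(-x)=\sum_{i=1}^{|P|}(-1)^{|P|-i}d_{P,i}\,x(1-x)^{i-1}$ is the defining expression for $\zetafs[P]$ in Equation~\eqref{Eqn:3series}, and unwinding $-G(-x)=\sum_{n\ge1}(-1)^{n+1}\#\hom_{Poset^s}(P,\chain[n])x^n$ gives the claim. As a consistency check, for $P=\chain[i]$ this recovers $\zetafs[{\chain[i]}]=x(1-x)^{i-1}$ with $\#\hom_{Poset^s}(\chain[i],\chain[n])=\binom{i-1}{n-1}$.

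The main obstacle is not any single computation but keeping the two independent sources of signs apart: the reciprocity sign $(-1)^{|P|-i}$ carried by the vector entries $d_{P,i}$ inside $\zetafp[P]$, and the alternating sign $(-1)^{n+1}$ produced by the reflection $x\mapsto-x$. I would control this by first noting that $\zetafp[P]=\sum_i(-1)^{|P|-i}d_{P,i}\,\zetafp[{\chain[i]}]$ follows from the explicit formula, so that linearity of the substitution reduces the entire computation to the single-vertex identity for chains, which I would verify directly. This also makes transparent that the transformation is exactly the operadic change of basis $\frac{x}{(1-x)^{i+1}}\mapsto x(1-x)^{i-1}$ of Corollary~\ref{Cor:basis}.
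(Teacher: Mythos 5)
Your argument is correct, but it reaches Lemma~\ref{Lemma:CM} by a genuinely different route than the paper. The paper's proof is geometric and stays inside the Ehrhart framework of Section~\ref{Ehrhart:theory}: it counts the surjective lattice points $\mathcal{L}_{k,n}\cap nPoly(P)$ by splitting $Poly(P)$ along its canonical triangulation, so that the key identity $\#\hom_{Poset^s}(P,\chain[n])=\sum_{i}(-1)^{|P|-i}d_{P,i}\,\#\hom_{Poset^s}(\chain[i],\chain[n])$ comes from inclusion--exclusion over the simplices, after which the chain count $\#\hom_{Poset^s}(\chain[i],\chain[n])=\multiset{n}{i-n}$ is summed directly and the lemma's sign $(-1)^{n+1}$ is produced by a final substitution $x\mapsto -x$. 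You never touch the polytope: your combinatorial input is the stratification of weak maps by image size, $\nsomega[n]{P}=\sum_{j}\binom{n}{j}\#\hom_{Poset^s}(P,\chain[j])$, which you encode as the Euler-type substitution $\zetafp[P](x)=\frac{1}{1-x}\,G\!\left(\frac{x}{1-x}\right)$, invert formally (legitimate, since $G$ has zero constant term and $x\mapsto\frac{x}{1-x}$ is invertible on such series), and then compare with Equations~\eqref{eqn:weak} and~\eqref{Eqn:3series}, with all verifications collapsing to the chain case $\#\hom_{Poset^s}(\chain[i],\chain[n])={i-1\choose n-1}$. Both proofs lean on the same nontrivial fact, the alternating expansion~\eqref{eqn:weak} of $\zetafp[P]$ in the vectors $d_{P,i}$ (i.e.\ Stanley reciprocity): in the paper it is implicit in the signed triangulation count, while in your proof it enters explicitly as the formula you substitute. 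What your route buys is a shorter, self-contained, purely formal derivation in which the two sign sources are cleanly separated, and which makes visible that the lemma is exactly the change of basis $\frac{x}{(1-x)^{i+1}}\mapsto x(1-x)^{i-1}$ of Corollary~\ref{Cor:basis} applied to~\eqref{eqn:weak}; what the paper's route buys is the geometric reading the section is built around, namely that $\zetafs[P]$ literally counts the lattice points $\mathcal{L}_{k,n}\cap nPoly(P)$ in dilates of the order polytope, in parallel with the interior-point interpretation of the strict order series.
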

\begin{proof}

We count $\#\mathcal{L}_{k,n}\cap nPoly(P)$ by first splitting the polytope $Poly(P)$ into simplices using the canonical triangulation of $Poly(P)$.
$$\#\hom_{Poset^s}(P,\chain[n+1])=\sqcup_{d_{P,|P|}} \#\mathcal{L}_{k,n}\cap n\Delta_{|P|}\setminus( \sqcup_{d_{P,|P|-1}} \#\mathcal{L}_{k,n}\cap n\Delta_{|P|-1})$$
$$\sqcup ( \sqcup_{d_{P,|P|-2}} \#\mathcal{L}_{k,n}\cap n\Delta_{|P|-2})\setminus \cdots (\sqcup_{d_{P, 1}} \#\mathcal{L}_{k,n}\cap n\Delta_1).$$

On the $\Delta_j$ we count $\#\hom_{Posets^s}(\chain[j],\chain[n])=\multiset{n}{j-n}$. 
 At the level of generating functions we obtain 
 \begin{eqnarray*}
 {\sum_{n=1}^\infty\#\hom_{Poset^s}(P,\chain[n])x^n}&=&
 \sum_{n=1}^\infty\sum_{i=1}^{|P|}d_{P,i}(-1)^{|P|-i}\#\hom_{Poset^s}(\chain[i],\chain[n])x^n\\
&=&
\sum_{i=1}^{|P|}(-1)^{|P|} d_{P,i}\sum_{n=1}^\infty (-1)^i\multiset{n}{i-n}x^n\\
&=&
\sum_{i=1}^{|P|}(-1)^{|P|}  d_{P,i}(-x)(-x-1)^{i-1}\\
&=&
\sum_{i=1}^{|P|}(-1)^{|P|-i-1}  d_{P,i}(-x)(1-(-x))^{i-1}
 \end{eqnarray*}
 The inclusion-exclusion principle implies the sign $(-1)^{|P|-j}$ on the first line. 
The lemma follows by evaluating on $-x$ and multiplying both sides by $(-1)$.
\end{proof}

\begin{corollary}
Let $P$ be a finite poset. Then, the number of weak surjective order preserving maps from $P$ to $\chain[s]$ is:
$$\sum_{i=s}^{|P|}(-1)^{|P|-i}{i-1\choose s-1}d_{P,i}.$$\label{Cor:vec}
\end{corollary}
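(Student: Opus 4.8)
The plan is to obtain the identity by comparing the two expressions we already have for the single series $\zetafs[P]$ and reading off the coefficient of $x^s$ on each side. On the one hand, Lemma~\ref{Lemma:CM} gives the combinatorial expansion $\zetafs[P]=\sum_{n=1}^\infty(-1)^{n+1}\#\hom_{Poset^s}(P,\chain[n])x^n$, so that the coefficient of $x^s$ in $\zetafs[P]$ is exactly $(-1)^{s+1}\#\hom_{Poset^s}(P,\chain[s])$. On the other hand, the defining formula~\eqref{Eqn:3series} writes the same series as $\zetafs[P]=\sum_{i=1}^{|P|}(-1)^{|P|-i}d_{P,i}\,x(1-x)^{i-1}$ in terms of the vector $(d_{P,1},\dots,d_{P,|P|})$ of Lemma~\ref{IEP}.

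The only computation needed is to extract the coefficient of $x^s$ from the algebraic side. Expanding each summand by the binomial theorem, $x(1-x)^{i-1}=\sum_{j=0}^{i-1}(-1)^j{i-1\choose j}x^{j+1}$, so the monomial $x^s$ arises precisely from $j=s-1$, contributing $(-1)^{s-1}{i-1\choose s-1}$. This coefficient vanishes unless $s\le i$, so that, summing over $i$, the coefficient of $x^s$ on the algebraic side equals $(-1)^{s-1}\sum_{i=s}^{|P|}(-1)^{|P|-i}d_{P,i}{i-1\choose s-1}$.

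Equating the two coefficient expressions yields $(-1)^{s+1}\#\hom_{Poset^s}(P,\chain[s])=(-1)^{s-1}\sum_{i=s}^{|P|}(-1)^{|P|-i}d_{P,i}{i-1\choose s-1}$; since $(-1)^{s+1}=(-1)^{s-1}$ the common sign cancels and the claimed formula follows. There is essentially no conceptual obstacle, as both expressions for $\zetafs[P]$ are already in hand and the argument is a coefficient comparison; the only point requiring care is the sign bookkeeping in the binomial expansion together with the matching of the parities $(-1)^{s\pm 1}$.
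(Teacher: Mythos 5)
Your proposal is correct and follows essentially the same route as the paper: both proofs extract the coefficient of $x^s$ by comparing the combinatorial expansion of $\zetafs[P]$ from Lemma~\ref{Lemma:CM} with the binomial expansion of $\sum_{i=1}^{|P|}(-1)^{|P|-i}d_{P,i}\,x(1-x)^{i-1}$ from Equation~\eqref{Eqn:3series}, and then cancel the common sign $(-1)^{s\pm1}$. Your write-up is in fact cleaner about the sign bookkeeping than the paper's displayed computation, but the underlying argument is identical.
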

\begin{proof}
 We compute the $s$ coefficient of $(-1)^{s+1}\zetafs[P]$:
\begin{eqnarray*}
\sum_{i=1}^{|P|} (-1)^{|P|-s}d_{P,i} \left( \sum_{j=0}^{i-1} {i-1\choose j}(-1)^{i-1-j}x^{j+1}\right)&=&\sum_{j=0}^{|P|-1}x^{j+1}\sum_{i=j+1}^{|P|}(-1)^{|P|-1-j-i-s}{i-1\choose j}d_{P,i}\\
&=&\sum_{k=1}^{|P|}x^{k}\sum_{i=k}^{|P|}(-1)^{|P|-k-i-s}{i-1\choose k-1}d_{P,i}
\end{eqnarray*}
\end{proof}

The family of series $\{\zetafs[P]\}_{P\in\hbox{ Posets}}$ corresponds to the interpretation of the symbol ${n\choose k}=(-1)^{n+k}\multiset{n}{k-n}$ counting subsets with a negative number of elements out of a subset with a negative number of elements. The change of basis Theorem is independent of the generalizations of the Stanley Reciprocity Theorem described in~\cite{crt} since it doesn't apply a reciprocity morphism. On the other hand, the change of basis Theorem and Lemma~\ref{Lemma:CM} are of a similar nature as the Stanley Reciprocity Theorem in the sense that they provide a combinatorial interpretation of series formally related.

\begin{remark}

We denote $\zetaf[\varnothing]:=1+x+\cdots$ since there is a unique map from the empty poset to any chain. From the theory on~\cite{posets} and \cite{APV} we have maps of $\mathcal{FP}$ algebras from order polytopes to order series. On polytopes, we have the operation of adding the boundary to an open polytope. The corresponding operation on power series sends strict order series to weak order series  $\zetaf[P]\mapsto\iota(\zetaf[P])=(-1)^{|P|+1}\zetaf[P]\circ i=\zetafp[P]$, where $i(x)=\frac{1}{x}$. How to represent the expansion of the order series of the empty set around infinity $\iota(\zetaf[\varnothing])=(-1)\zetaf[P]\circ i$? What symbol represents the closure of the empty set? we decided to use the zero Maya $\zetafp[\mayadigit{0}]:=\iota(\zetaf[\varnothing])$. Then  $\zetafp[\mayadigit{0}]$ is the unit in the  $\mathcal{FP}$ algebra of weak order series. \label{Remark:maya}
\end{remark}

With regards to the Yoneda lemma,  we lost information when we replaced $hom_{Posets}(P, )$ by the series $\sum_n \#hom_{Posets}(P,\chain[n])x^n$. Firstly, we restrict the environment by only considering those maps from $P$ to \textit{chains} instead of maps from $P$ to every other object on the category of ${SP}$ posets, the second part where we lost information is that we only \textit{count} those maps $\#hom_{Posets} (P,\chain[n])$. We believe the action of $\widetilde{\{c<d\}}$ becomes commutative because of this loss of information.

We aim to work with shuffle series, for that reason we return to work with algebras over the operad of series-parallel posets $\mathcal{SP}$. More precisely, the inclusion of series-parallel posets into the set of finite posets induces a map of operads: $\mathcal{SP}\rightarrow \mathcal{FP}$, then for an $\mathcal{FP}$-algebra $X$ we have a composition map: $\mathcal{SP}\rightarrow \mathcal{FP}\rightarrow End_X$ restricting the $\mathcal{FP}$-algebra structure on $X$ to an $\mathcal{SP}$-algebra structure. 

On weak surjective order series the action of $\widetilde{\{c<d\}}$ is forced to be $\frac{1-x}{x}$, to see this follow the proof of Lemma~\ref{lemma:ch}, first verify it on chains. 
\begin{definition}
The $\mathcal{SP}$ algebra of strict order series is described by the following information: a generator, the unit over the operadic action, the action of $\widetilde{\{c,d\}}$, and the action of $\widetilde{\{c<d\}}$: 

\[(\zetaf[1]=\frac{x}{(1-x)^2},\zetaf[\varnothing]=\frac{1}{(1-x)}, \widetilde{\{c,d\}}=\hbox{Hadamard product of series, }\]

\[\widetilde{\{c<d\}}=\hbox{ deformed product of series by }(1-x)).\]

Strict order series corresponds to the interpretation of the symbol ${m\choose n}$ as counting subsets of a set.
Similarly, the $\mathcal{SP}$ algebra of weak order series is 

\[(\zetafp[1]=\frac{x}{(1-x)^2},\zetafp[\mayadigit{0}]=\frac{x}{(1-x)}, \widetilde{\{c,d\}}=\hbox{Hadamard product of series,}\]

\[\widetilde{\{c<d\}}=\hbox{ deformed product of series by }\frac{(1-x)}{x}),\]

and it corresponds to the interpretation of the symbol ${m\choose n}=(-1)^{n}\multiset{m}{n}$.
Finally, the $\mathcal{SP}$ algebra of weak surjective order series is given by: 

\[(\zetafs[1]={x},\zetafs[\mayadigit{0}]=\frac{x}{(1-x)}, \{c,d\}=\hbox{a similar operation to the Hadamard product}\]

\[\hbox{ of series, }\{c<d\}=\hbox{ deformed product of series by }\frac{(1-x)}{x}),\] 

this algebra corresponds to the interpretation of the symbol ${m\choose n}=(-1)^{n+m}\multiset{m}{n-m}$. The explicit expression for the action of $\widetilde{\{c<d\}}$ on weak surjective series follows from Corollary~\ref{Cor:basis}.
\end{definition}

\begin{remark}
A triangulated space $\{\mathcal{X}(n)\}_{n\in\mathbb{N}}$ is the geometric realization of a simplicial set (a functor $\mathcal{X}:\Delta^{op}\rightarrow Sets$)  that satisfies the following condition~\cite[Proposition 7]{methods}:  for any non-degenerate simplex $x\in \mathcal{X}_n$ and for any increasing map $f:[m]\rightarrow [n]$ the simplex $\mathcal{X}(f)(x)\in \mathcal{X}_m$ is also nondegenerate. In the first chapter of the reference cited above we learn that for a triangulated space we only need to specify the strict order-preserving morphisms among simplices to characterize the space, while a simplicial set requires weak order-preserving morphism. 

If we see order polytopes as triangulated spaces, then by the three interpretations of the binomial coefficient, to count all the weak order-preserving morphisms of order polytopes, it is enough to count all the weak surjective maps, which are surjective degenerations. 
\end{remark}

It is expected that the complexity of shuffles varies with the topology of the input posets.
 To motivate the next theorem, note that there is an isomorphism between shuffles of $\chain[k]$ and $\chain[m]$, and strict posets maps from $\chain[k]$ to $\chain[k+m]$.

\begin{theorem}As algebras over the operad $\mathcal{SP}$, the algebra of shuffle series is isomorphic to the algebra of weak order series.\label{main:thrm}\label{Remark:basis}

For any $\mathcal{SP}$-poset $P$, if $\zetafp[P]=\sum_{i=0}^n (-1)^{n-i}d_{P,i}\zetafp[i]$, then, the vectors of the corresponding shuffle series are $\mathcal{SH}(P)=\sum_{i=0}^n (-1)^{n-i}d_{P,i}\mathcal{SH}(i)$.
\end{theorem}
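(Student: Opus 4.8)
The plan is to exhibit a single explicit map $\phi$ and verify directly that it is an isomorphism of $\mathcal{SP}$-algebras. By Theorem~\ref{Thm:equality} and the discussion preceding it, the two algebras are spanned over $\mathbb{Z}$ by the chain series $\{\mathcal{SH}(i)\}_{i}$ and $\{\zetafp[i]\}_{i}$ respectively, with $\mathcal{SH}(i)=\frac{1}{(1-x)^{i+1}}$ and $\zetafp[i]=\frac{x}{(1-x)^{i+1}}$. Since $x\cdot\frac{1}{(1-x)^{i+1}}=\frac{x}{(1-x)^{i+1}}$, the natural candidate is $\phi(f)=xf$, equivalently the $\mathbb{Z}$-linear extension of $\mathcal{SH}(i)\mapsto\zetafp[i]$. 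Multiplication by $x$ is injective on formal power series, and division by $x$ is defined on its image because every weak order series has vanishing constant term; hence $\phi$ is a $\mathbb{Z}$-linear bijection between the two spans with $\phi(\mathcal{SH}(\varnothing))=\zetafp[\mayadigit{0}]$ on units and $\phi(\mathcal{SH}(1))=\zetafp[1]$ on generators.

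Next I would check that $\phi$ intertwines the two binary operations of $\mathcal{SP}$. For the parallel operation, writing $f=\sum_n a_nx^n$ and $g=\sum_n b_nx^n$, shifting both inputs up by one degree shifts the coefficientwise product up by one degree, so $(xf)\ast(xg)=x(f\ast g)$; since the Hadamard product $\ast$ is the action of $\widetilde{\{c,d\}}$ on both sides, this gives $\phi(\widetilde{\{c,d\}}(f,g))=\widetilde{\{c,d\}}(\phi f,\phi g)$. For the series operation the shuffle action is the $(1-x)$-deformed product while the weak action is the $\tfrac{1-x}{x}$-deformed product, and one computes
\[
\widetilde{\{c<d\}}^{\,\mathrm{weak}}(\phi f,\phi g)=(xf)\,\tfrac{1-x}{x}\,(xg)=x\,f(1-x)g=\phi\big(\widetilde{\{c<d\}}^{\,\mathrm{sh}}(f,g)\big),
\]
the factor $\tfrac{1}{x}$ in the weak deformation cancelling exactly one of the two factors of $x$ contributed by $\phi$ on the inputs. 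Thus $\phi$ intertwines both binary generators.

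Finally I would upgrade the intertwining from the binary generators to all of $\mathcal{SP}$. Since $\mathcal{SP}(n)$ is generated by $\widetilde{\{c,d\}}$ and $\widetilde{\{c<d\}}$ under grafting, every operation $\widetilde{P}$ arises by composing binaries along a factorization tree; because $\phi$ is linear and the actions distribute over $\mathbb{Z}$-linear combinations (as already exploited in the proof of Theorem~\ref{Thm:equality}), an induction on the tree from leaves to root yields $\phi(\widetilde{P}(f_1,\dots,f_n))=\widetilde{P}(\phi f_1,\dots,\phi f_n)$, so $\phi$ is an isomorphism of $\mathcal{SP}$-algebras. In particular $\phi(\mathcal{SH}(P))=\zetafp[P]$, hence $\phi^{-1}(\zetafp[P])=\mathcal{SH}(P)$ and $\phi^{-1}(\zetafp[i])=\mathcal{SH}(i)$; applying the linear inverse $\phi^{-1}$ (division by $x$) to $\zetafp[P]=\sum_{i=0}^n(-1)^{n-i}d_{P,i}\zetafp[i]$ gives the stated vector identity $\mathcal{SH}(P)=\sum_{i=0}^n(-1)^{n-i}d_{P,i}\mathcal{SH}(i)$. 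I expect the main obstacle to be bookkeeping rather than conceptual: one must confirm that the single scalar map $\phi(f)=xf$ simultaneously reconciles the two distinct deformations (which is precisely why the weak series carries the factor $\tfrac{1-x}{x}$ instead of $(1-x)$), and that multiplication by $x$ genuinely restricts to a bijection of the two $\mathbb{Z}$-spans; the Doppelgänger phenomenon makes the poset-to-series assignments non-injective, but $\phi$ is defined on series and commutes with the bookkeeping of pairs (series, poset) from Lemma~\ref{Lemma:act}, so it causes no difficulty.
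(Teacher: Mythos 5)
Your proposal is correct, and its skeleton coincides with the paper's proof --- both rest on the basis correspondence $\zetafp[i]\leftrightarrow\mathcal{SH}(i)$ --- but your verification runs along a more explicit, self-contained route. The paper defines the map only on basis elements and then matches structure constants by citation: \cite[Proposition 2.13]{posets} gives $\widetilde{\{c<d\}}(\zetafp[k],\zetafp[l])=\zetafp[k+l]$, and \cite[Remark 2.17]{posets} shows that $\widetilde{\{c,d\}}$ acts on $\{\zetafp[i]\}_{i\in\mathbb{N}}$ with the same coefficients as Equation~\eqref{eqn:U}, whence the two operadic actions agree basis-to-basis. You instead observe that the basis correspondence is realized uniformly by the single operator $\phi(f)=xf$ and check the intertwining directly: the Hadamard product commutes with the coefficient shift, $(xf)\ast(xg)=x(f\ast g)$, and multiplication by $x$ conjugates the $(1-x)$-deformed product into the $\frac{1-x}{x}$-deformed product, since $(xf)\,\frac{1-x}{x}\,(xg)=x\,f(1-x)g$. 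This buys a computation-level explanation of \emph{why} the weak deformation carries the extra factor $\frac{1}{x}$, and it replaces the structure-constant comparison by two one-line identities; what it still needs from elsewhere is exactly what the paper also needs, namely that weak order series really do carry the explicit $\mathcal{SP}$-structure (Hadamard for $\widetilde{\{c,d\}}$, $\frac{1-x}{x}$-deformation for $\widetilde{\{c<d\}}$) computing $\zetafp[{\widetilde{P}(P_1,\cdots,P_k)}]$ --- in the paper this is precisely the content of the citations to \cite{posets}. Your leaves-to-root induction correctly leans on Theorem~\ref{Thm:equality} on the shuffle side and on well-definedness (Lemma~\ref{lemma:WD}, available since both operations are associative and the Hadamard product is commutative), and the final vector identity then follows by linearity from $\phi^{-1}(\zetafp[P])=\mathcal{SH}(P)$ and $\phi^{-1}(\zetafp[i])=\mathcal{SH}(i)$, as you conclude; your closing remark about Doppelg\"angers is also handled correctly, since $\phi$ is defined on series and the poset label is carried along unchanged.
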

\begin{proof} 
We send the basis element $\zetafp[i]$ to $\mathcal{SH}(i)$.

From~\cite[Proposition 2.13]{posets}, it follows that $\{c<d\}(\zetafp[k],\zetafp[n])=\zetafp[k+n]$, and by~\cite[Remark 2.17]{posets} the action of $\{c,d\}$ has the same coefficients as Equation~\eqref{eqn:U}, then the action of the operad $\mathcal{SP}$ described on the basis $\{\zetafp[i]\}_{i\in\mathbb{N}}$ is the same as the action of the operad on the basis $\{\mathcal{SP}(i)\}_{i\in\mathbb{N}}$.
\end{proof}

A pair of posets $P,Q$ are Doppelg\"angers if $\zetafp[P]=\zetafp[Q]$.

\begin{corollary}\label{Inj}
    The $\mathcal{SP}$-posets $P$ and $Q$ are Doppelgangers iff $\mathcal{SH}(P)=\mathcal{SH}(Q)$.
\end{corollary}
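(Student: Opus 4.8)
The plan is to leverage Theorem~\ref{main:thrm} directly, which establishes an isomorphism of $\mathcal{SP}$-algebras between weak order series and shuffle series via the correspondence $\zetafp[i]\mapsto\mathcal{SH}(i)$ on basis elements. Since an isomorphism of algebras is in particular a bijection of underlying sets, two posets produce equal images under $\mathcal{SH}$ precisely when they produce equal images under $\zetafp[\cdot]$. The corollary then reduces to unwinding the definition of Doppelg\"angers, which the paper has just stated to mean exactly $\zetafp[P]=\zetafp[Q]$.

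Concretely, I would argue as follows. By the displayed formula in Theorem~\ref{main:thrm}, if $\zetafp[P]=\sum_{i=0}^n(-1)^{n-i}d_{P,i}\zetafp[i]$ then $\mathcal{SH}(P)=\sum_{i=0}^n(-1)^{n-i}d_{P,i}\mathcal{SH}(i)$, and similarly for $Q$ with coefficients $d_{Q,i}$. The key structural fact I would invoke is that the families $\{\zetafp[i]\}_{i\in\mathbb{N}}$ and $\{\mathcal{SH}(i)\}_{i\in\mathbb{N}}$ are each linearly independent over $\mathbb{Z}$ (indeed over $\mathbb{Q}$), since $\zetafp[i]=\frac{x}{(1-x)^{i+1}}$ and $\mathcal{SH}(i)=\frac{1}{(1-x)^{i+1}}$ have pairwise distinct pole orders at $x=1$. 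Consequently the expansion coefficients are uniquely determined by the series: $\zetafp[P]=\zetafp[Q]$ holds if and only if $d_{P,i}=d_{Q,i}$ for all $i$, which in turn holds if and only if $\mathcal{SH}(P)=\mathcal{SH}(Q)$. This chain of equivalences gives both directions of the corollary simultaneously.

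I do not expect a genuine obstacle here, since the statement is a formal consequence of an isomorphism that has already been proven; the only point requiring a sentence of care is the linear independence of each basis family, which I would justify by the distinct-pole-order observation rather than leaving it implicit. An even cleaner phrasing avoids coordinates entirely: the map $\zetafp[i]\mapsto\mathcal{SH}(i)$ extends to an isomorphism $\Psi$ of $\mathcal{SP}$-algebras by Theorem~\ref{main:thrm}, and $\Psi(\zetafp[P])=\mathcal{SH}(P)$ for every $\mathcal{SP}$-poset $P$; since $\Psi$ is injective, $\mathcal{SH}(P)=\mathcal{SH}(Q)$ if and only if $\zetafp[P]=\zetafp[Q]$, which is the definition of $P$ and $Q$ being Doppelg\"angers. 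I would present the argument in this coordinate-free form as the main line, mentioning the coefficient comparison only as a concrete restatement.
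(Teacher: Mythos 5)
Your proof is correct and takes essentially the same route as the paper, which states Corollary~\ref{Inj} without an explicit proof as an immediate consequence of Theorem~\ref{main:thrm}; your coordinate-free argument via the isomorphism $\zetafp[i]\mapsto\mathcal{SH}(i)$ is exactly the intended reasoning. Your one addition—justifying the linear independence of the families $\{\zetafp[i]\}_{i\in\mathbb{N}}$ and $\{\mathcal{SH}(i)\}_{i\in\mathbb{N}}$ by the distinct pole orders of $\frac{x}{(1-x)^{i+1}}$ and $\frac{1}{(1-x)^{i+1}}$ at $x=1$—makes explicit a detail the paper leaves implicit, and is a worthwhile clarification rather than a deviation.
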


This explains the computational observation from~\cite[Example 3.6]{ShuffleTree}.

In \cite{APV} 
the structure of algebras over the operad of posets on Hurwitz zeta values $\{\zeta(n,k)\}_{n\in\mathbb{N}}$ is studied. It is known that multizeta values have a shuffle product, but our results imply that zeta values are already linked to the concept of shuffles: shuffle series and Hurwitz zeta values are algebras over the operad $\mathcal{SP}$.

\subsection{The triple of shuffles}
We remind the reader that in this section we are working with algebras over the operad $\mathcal{SP}$ instead of the operad $\mathcal{FP}$. 
\begin{lemma}\label{lemma:3sh}

From Loeb's three interpretations of the binomial, we define three sets of series related to shuffles. Those sets have the following operadic structure:

\begin{itemize}
    \item The colimit-indexing shuffles $\mathcal{SH}(P)=\sum_{i=1}^{|P|} (-1)^{n-i}d_{P,i}\frac{1}{(1-x)^{i+1}},$ with the operadic structure:
$$(\mathcal{SH}(1)=\frac{1}{(1-x)^2}, \mathcal{SH}(\varnothing)=\frac{1}{1-x}, \{c,d\}=\hbox{Hadamard product},$$
$$\{c<d\}=\hbox{product by }(1-x)).$$ 
This algebra corresponds to the interpretation ${m\choose n}=(-1)^n\multiset{m}{n}$.

\item The right deck-divider shuffle series $dd\mathcal{SH}(P)=\sum_{i=1}^{|P|} (-1)^{|P|-i}d_{P,i}(1-x)^{i-1},$ with the operadic structure:
$$(dd\mathcal{SH}(1)=1, dd\mathcal{SH}(\varnothing)=\frac{1}{1-x}, \{c,d\}=\hbox{a similar operation to Hadamard product},$$ $$\{c<d\}=\hbox{product by }(1-x)).$$
This algebra corresponds to the intepretation ${m\choose n}=(-1)^{n+m}\multiset{m}{n-m}$. 

\item The left deck-divider shuffle series $ldd\mathcal{SH}(P)=\sum_{i=1}^{|P|} d_{P,i}\frac{x^{i+1}}{(1-x)^{i+1}},$ with the operadic structure:
$$(ldd\mathcal{SH}(1)=\frac{x^2}{(1-x)^2}, ldd\mathcal{SH}(\mayadigit{0})=\frac{x}{1-x}, \{c,d\}=\hbox{Hadamard product},$$
$$\{c<d\}=\hbox{product by }\frac{(1-x)}{x}).$$
\end{itemize}
\end{lemma}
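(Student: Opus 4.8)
The plan is to recognize each of the three shuffle families as a single power of $x$ times one of the three order series algebras, whose $\mathcal{SP}$-structure was already determined in the previous subsection, and then to argue that multiplication or division by $x$ is an isomorphism of $\mathcal{SP}$-algebras. Comparing the defining expansions \eqref{eqn:strict}, \eqref{eqn:weak} and \eqref{Eqn:3series} with the three displayed formulas gives at once the dictionary
\[\mathcal{SH}(P)=\tfrac{1}{x}\,\zetafp[P],\qquad dd\mathcal{SH}(P)=\tfrac{1}{x}\,\zetafs[P],\qquad ldd\mathcal{SH}(P)=x\,\zetaf[P],\]
and on the distinguished bases this reads $\mathcal{SH}(i)=\tfrac{1}{x}\zetafp[i]$, $(1-x)^{i-1}=\tfrac{1}{x}\zetafs[i]$ and $\tfrac{x^{i+1}}{(1-x)^{i+1}}=x\cdot\tfrac{x^{i}}{(1-x)^{i+1}}$. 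In particular each rescaling is a bijection carrying the basis of one order algebra onto the proposed basis of the corresponding shuffle algebra; the colimit-indexing case is then just Theorem~\ref{main:thrm} reread, so only the two deck-divider families ask for fresh work.

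Next I would check how the elementary operations behave under $\Phi_{\epsilon}\colon f\mapsto x^{\epsilon}f$ with $\epsilon=\pm1$. A one-line index shift shows the Hadamard product is preserved, $(f\ast g)\,x^{\epsilon}=(x^{\epsilon}f)\ast(x^{\epsilon}g)$ (for $\epsilon=-1$ one uses that the order series in play are divisible by $x$), and hence so is the \emph{similar to Hadamard} product of Corollary~\ref{Cor:basis}, which is Hadamard conjugated by a basis-preserving change of variable; this settles $\widetilde{\{c,d\}}$ for all three families at once. For a deformed product $f\lambda g$ the same bookkeeping yields $\Phi_{\epsilon}(f\lambda g)=(\Phi_{\epsilon}f)\,(x^{-\epsilon}\lambda)\,(\Phi_{\epsilon}g)$, so the multiplier is rescaled by $x^{-\epsilon}$. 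Taking $\lambda=\tfrac{1-x}{x}$ for the two weak algebras under $\Phi_{-1}$ produces the multiplier $1-x$, while $\lambda=1-x$ for the strict algebra under $\Phi_{+1}$ produces $\tfrac{1-x}{x}$, matching the three prescriptions for $\widetilde{\{c<d\}}$.

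Finally I would read off generators and units by substitution: dividing $\zetafp[1]=\tfrac{x}{(1-x)^2}$ and $\zetafs[1]=x$ by $x$ gives $\tfrac{1}{(1-x)^2}$ and $1$, multiplying $\zetaf[1]=\tfrac{x}{(1-x)^2}$ by $x$ gives $\tfrac{x^2}{(1-x)^2}$; similarly the units $\tfrac{x}{1-x}$ and $\tfrac{1}{1-x}$ map to $\tfrac{1}{1-x}$ and $\tfrac{x}{1-x}$, the interchange of the unit symbols $\varnothing$ and $\mayadigit{0}$ being precisely the shift by $x$ recorded in Remark~\ref{Remark:maya}. Since each $\Phi_{\epsilon}$ is a linear bijection onto the span of the proposed basis and intertwines both binary operations, it is an isomorphism of $\mathcal{SP}$-algebras and the claimed structures follow. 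The one delicate point, and the main obstacle, is the right deck-divider product $\widetilde{\{c,d\}}$: one must make sure that conjugating the \emph{similar to Hadamard} operation by $\Phi_{-1}$ does not leak a stray factor of $x$ into the output, which is why I would phrase that operation through its universal structure constants (equal to those of \eqref{eqn:U}) on the basis $\{(1-x)^{i-1}\}$ rather than manipulating a Hadamard product directly in that representation.
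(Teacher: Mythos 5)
Your proposal is correct, and it reaches the lemma by a genuinely more explicit route than the paper. The paper takes $\mathcal{SH}(P)$ as the initial datum and produces the other two families by applying two previously established operadic isomorphisms to it: the reciprocity morphism $\iota$ of \cite[Theorem 2.14]{posets} gives $ldd\mathcal{SH}(P)=\iota(\mathcal{SH}(P))$ (a substitution $x\mapsto 1/x$ with a sign, not a rescaling), and the change of basis $\{\frac{1}{(1-x)^{k+1}}\}\mapsto\{(1-x)^{k-1}\}$ of Corollary~\ref{Cor:basis} gives $dd\mathcal{SH}(P)$; the operadic structures are then transported through these maps together with Theorem~\ref{main:thrm}, with the verification done on chains and extended. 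You instead tie each shuffle family directly to one of the three order-series algebras by the dictionary $\mathcal{SH}(P)=\tfrac{1}{x}\zetafp[P]$, $dd\mathcal{SH}(P)=\tfrac{1}{x}\zetafs[P]$, $ldd\mathcal{SH}(P)=x\,\zetaf[P]$ (all three identities check against \eqref{eqn:strict}, \eqref{eqn:weak}, \eqref{Eqn:3series}; note that your $ldd$ map, multiplication by $x$ applied to the strict series, and the paper's $\iota$ applied to $\mathcal{SH}$ agree, since $(-1)^{|P|+1}\frac{1}{(1-1/x)^{i+1}}(-1)^{|P|-i}=\frac{x^{i+1}}{(1-x)^{i+1}}$) and prove intertwining by hand: the index-shift argument for the Hadamard product is valid, the divisibility by $x$ needed for $\Phi_{-1}$ holds because nonempty posets admit no maps to $\chain[0]$, and the multiplier law $\Phi_{\epsilon}(f\lambda g)=(\Phi_{\epsilon}f)(x^{-\epsilon}\lambda)(\Phi_{\epsilon}g)$ correctly converts $\tfrac{1-x}{x}\leftrightarrow 1-x$ in all three cases, matching the generators and units as you list them. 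You also correctly isolate the one point where a naive argument would fail, namely that $\widetilde{\{c,d\}}$ on the right deck-divider side is not literally Hadamard but the conjugate of it under the change of basis; handling it through the structure constants of \eqref{eqn:U} on the basis $\{(1-x)^{i-1}\}$ is exactly right, since $\Phi_{-1}$ carries $\zetafs[i]=x(1-x)^{i-1}$ to $(1-x)^{i-1}$ basis-to-basis, so any operation defined by finite linear combinations of basis elements is automatically intertwined. What each approach buys: the paper's proof is shorter and foregrounds the conceptual pairing with Loeb's three interpretations by reusing named operadic morphisms; yours is self-contained, makes transparent \emph{why} the deformed-product multipliers of the three algebras differ by exactly a factor of $x$, and would let a reader verify the operadic structures without consulting \cite{posets}.
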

\begin{proof}
We consider the series $\mathcal{SH}(P)$ as initial data.
The pairing (strict order series, weak order series, weak surjective order series) with (left deck-divider shuffle series, colimit-indexing shuffle series, right deck-divider shuffle series) follows Loeb's three interpretations of ${n\choose i}$. 

As sets, the other two series are obtained with the maps: $ldd\mathcal{SH}(P)=\iota(\mathcal{SH}(P))$, where $\iota$ is the operadic reciprocity morphism~\cite[Theorem 2.14]{posets}. The series $dd\mathcal{SH}(P)$ is defined from $\mathcal{SH}(P)$ via the change of basis $\{\frac{1}{(1-x)^{k+1}}\}_{k\in\mathbb{N}}\mapsto\{{(1-x)^{k-1}}\}_{k\in\mathbb{N}}$, see Corollary~\ref{Cor:basis}.

The operadic structure is first computed on chains and extended to series-parallel posets. 

Once we know that $\{\mathcal{SH}(P)\}_{P \in \mathcal{SP}-\text{poset}}$ as an algebra over the operad $\mathcal{SP} $ is isomorphic to the algebra of order series $\{\zetaf[P]\}_{P \in \mathcal{SP}-\text{poset}}$, the operadic structure of $\{ldd\mathcal{SH}(P)\}_{P \in \mathcal{SP}-\text{poset}}$ and $\{dd\mathcal{SH}(P)\}_{P \in \mathcal{SP}-\text{poset}}$ must coincide with the operadic structure of weak order series and weak surjective order series respectively, since those structures follow from combinatorial identities of the binomial coefficient.
\end{proof}

The previous theorem is algebraic in nature, it describes the underlying structure of sets. From the combinatorial point of view, what are each of those series counting?

The following two conditions generalize conditions $C_1$ and $C_2$:

\begin{itemize}
\item[$C_1^\prime$:] 
Every poset determines a vector in the space $\mathbb{Z}[Series_C(1),Series_C(2),\cdots]$.

\item[$C_2^\prime$:] The action of $\widetilde{\{c,d\}}$ on series of posets is an endomorphism of $\mathbb{Z}[Series_C(1),Series_C(2),\cdots]$: $$\widetilde{\{c,d\}}(Series_C(j), Series_C(k))=\sum_{i\in I} a_i Series_C(i), a_i\in\mathbb{Z},$$ with $I$ a finite set of indices.
\end{itemize}

\begin{lemma}\label{Lemma:conditions}
If a rule to generate posets $C$ from two inputs satisfies:
\begin{itemize}
\item there are associative operations $\widetilde{\{c,d\}}, \widetilde{\{c<d\}}$ defined on the space $\mathbb{Z}[Series_C(1),Series_C(2),\cdots]$,  
\item conditions $C_1^\prime$ and $C_2^\prime$,
\item the operation $\widetilde{\{c,d\}}$ is commutative,
\item both operations enumerate structures generated by $C$, 
\end{itemize}
  we conclude that there is a well defined action of the operad $\mathcal{SP}$. The series $$\widetilde{P}(Series_C(1),\cdots,Series_C(1))$$ enumerates $f\in\hom_C(P,\chain[n])$ for every $SP$-poset $P$. 
\end{lemma}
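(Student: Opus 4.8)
The plan is to prove the two assertions---well-definedness of the $\mathcal{SP}$-action and the combinatorial interpretation of the root series---separately, reusing the machinery already developed for shuffle series. For the first assertion I would reduce directly to Lemma~\ref{lemma:WD}: that lemma guarantees that whenever $\gamma(\widetilde{\{c<d\}})$ is associative and $\gamma(\widetilde{\{c,d\}})$ is both associative and commutative, the induced action of $\mathcal{SP}$ on $C$-series is independent of the chosen factorization. These are precisely three of our hypotheses, so the value $\widetilde{P}(Series_C(1),\cdots,Series_C(1))$ is factorization-independent, and Lemma~\ref{Lemma:act} upgrades this to an operadic morphism $P\mapsto Series_C(P)$.

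For the second assertion I would argue by induction on the internal vertices of a fixed factorization of $P$, following the proof of Theorem~\ref{Thm:equality}. Conditions $C_1^\prime$ and $C_2^\prime$, together with the hypothesis that $\widetilde{\{c,d\}}$ and $\widetilde{\{c<d\}}$ are operations \emph{defined on} $\mathbb{Z}[Series_C(1),Series_C(2),\cdots]$, guarantee that the computation never leaves this module: each leaf carries $Series_C(1)$, and each application of either operation returns a finite $\mathbb{Z}$-linear combination of the generators $Series_C(i)$, so at every vertex the partial evaluation is a genuine vector. The combinatorial content is then carried up the tree by the hypothesis that both operations enumerate structures generated by $C$: if $P_{v_1},P_{v_2}$ are the subposets attached to the children of a vertex $v$ and the partial evaluations there already equal $Series_C(P_{v_1})$ and $Series_C(P_{v_2})$, then applying the label of $v$ yields $Series_C\big(\widetilde{\{c,d\}}(P_{v_1},P_{v_2})\big)$ or $Series_C\big(\widetilde{\{c<d\}}(P_{v_1},P_{v_2})\big)$, which is exactly $Series_C(P_v)$ (the subposets are themselves $SP$-posets, being subtrees of the factorization). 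The base case holds since $Series_C(1)$ counts $\hom_C(\chain[1],\chain[n])$ by definition, so propagating to the root gives $Series_C(P)=\sum_{n}\#\hom_C(P,\chain[n])x^n$, which establishes the claimed enumeration.

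The hard part will be making the phrase ``both operations enumerate structures generated by $C$'' precise enough to license the single recursive identity $\widetilde{\{c,d\}}(Series_C(P_{v_1}),Series_C(P_{v_2}))=Series_C(\widetilde{\{c,d\}}(P_{v_1},P_{v_2}))$ (and its $\widetilde{\{c<d\}}$ analogue) at each vertex; this is the step where the abstract algebraic operation must be matched with the combinatorial gluing of structures, and it plays exactly the role that Equation~\eqref{eq:sh} and Lemma~\ref{lemma:ch} played in the shuffle case. Everything else is bookkeeping: $C_1^\prime$ provides the vector $(d_{P,i})$ attached to each poset, $C_2^\prime$ keeps $\widetilde{\{c,d\}}$ inside the span of $\{Series_C(i)\}$, and factorization-independence from the first assertion ensures the induction is unambiguous regardless of which binary tree we climb.
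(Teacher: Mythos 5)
Your proposal is correct and follows essentially the same route as the paper, whose entire proof is the one-line remark that the lemma ``follows by using the same arguments as in the proof of Lemma~\ref{lemma:WD} and Theorem~\ref{Thm:equality}''---exactly the two ingredients you invoke (factorization-independence via the associativity/commutativity hypotheses, then the leaf-to-root induction staying inside $\mathbb{Z}[Series_C(1),Series_C(2),\cdots]$ with the combinatorial meaning propagated at each vertex). Your closing observation that the hypothesis ``both operations enumerate structures generated by $C$'' is precisely what licenses the recursive identity at each vertex is accurate: in the paper that phrase \emph{is} the assumption doing that work, verified case by case in the applications (e.g.\ Equation~\eqref{eq:sh} and Lemma~\ref{lemma:ch} for shuffles, and the corresponding checks in the deck-divider lemmas).
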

\begin{proof}
It follows by using the same arguments as in the proof of Lemma~\ref{lemma:WD} and Theorem~\ref{Thm:equality}. 
\end{proof}
\begin{definition}
Let $P$ be a $SP$-poset and fix a chain $\chain[n]$. Consider the Hasse diagram of the poset $P$ and draw $n$ horizontal lines in such a way that before the first line and after the last line there is at least one point of the poset $P$, and the lines are not consecutive. Call each of this possible configurations a right deck-divider shuffle.  
\end{definition}
 In Figure~\ref{ddmathcal{SH}} we see two examples of deck-divider shuffles.

\begin{figure}[thb]
\centering
\includegraphics[width = .5\textwidth]{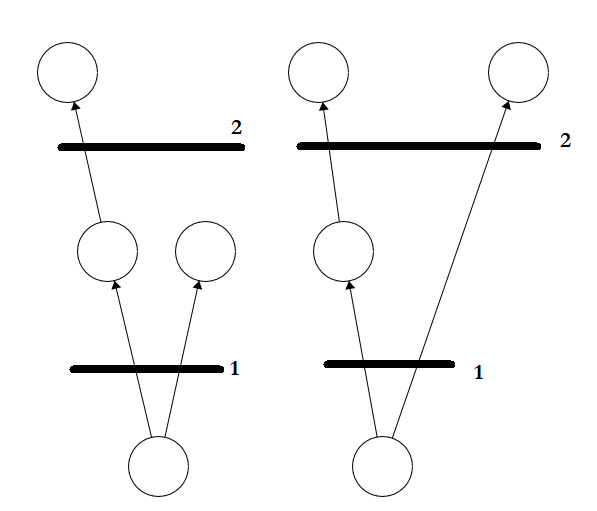}
\caption{Two examples of right deck-divider shuffles between a poset $\widetilde{\{c<d\}}(\chain[1] ,\widetilde{\{c,d\}}(\chain[2],\chain[1]))$ and $\chain[2]$}\label{ddmathcal{SH}}
\end{figure}

Given a poset $P$, a right deck divider shuffle of $P$ and $\chain[n]$ can be seen as colimit-indexing shuffles where there is a maximal chain in which the maximum point belong to the poset $P$, there is a maximal chain in which the minimum point belong to the poset $P$ for any two elements of $\chain[n]$ $i,j$ there is a maximal chain in which a point of $P$ separates them.
\begin{theorem} Let $P$ be a $SP$-poset, then the $n$-coefficient of $(-1)^n dd\mathcal{SH}(P)$ counts right deck-divider shuffles between $P$ and the $\chain[n]$.
\end{theorem}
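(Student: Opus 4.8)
The plan is to split the statement into an algebraic coefficient computation at the level of generating series and a purely combinatorial bijection, joining the two at the end. First I would extract the $n$-coefficient of $(-1)^n dd\mathcal{SH}(P)$ directly from the defining formula $dd\mathcal{SH}(P)=\sum_{i=1}^{|P|}(-1)^{|P|-i}d_{P,i}(1-x)^{i-1}$. Since $[x^n](1-x)^{i-1}=(-1)^n\binom{i-1}{n}$, the multiplicative factor $(-1)^n$ cancels and the $n$-coefficient becomes $\sum_{i=n+1}^{|P|}(-1)^{|P|-i}\binom{i-1}{n}d_{P,i}$, the sum starting at $i=n+1$ because $\binom{i-1}{n}$ vanishes for $i\le n$. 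By Corollary~\ref{Cor:vec} with $s=n+1$, this is precisely $\#\hom_{Poset^s}(P,\chain[n+1])$, the number of surjective weak order-preserving maps $P\to\chain[n+1]$. Equivalently one may note $\zetafs[P]=x\cdot dd\mathcal{SH}(P)$ and read off the $x^{n+1}$-coefficient using Lemma~\ref{Lemma:CM}; the signs work out to the same conclusion. This reduces the theorem to a bijection between right deck-divider shuffles and surjective weak order-preserving maps into $\chain[n+1]$.

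Next I would construct that bijection. Given a right deck-divider shuffle, the $n$ horizontal lines cut the Hasse diagram into $n+1$ strips, numbered $1,\dots,n+1$ from bottom to top, and I assign to each point of $P$ the index of its strip, yielding a map $f:P\to\chain[n+1]$. This $f$ is weakly order-preserving because in a Hasse diagram a covering $y$ of $x$ is drawn above $x$, so $x<_P y$ forces $f(x)\le f(y)$. It is surjective because the three conditions in the definition — a point below the first line, a point above the last line, and the lines being non-consecutive — state exactly that the bottom strip, the top strip, and every strip between two successive lines is nonempty, i.e. every value of $\chain[n+1]$ is attained. Conversely, a surjective weak order-preserving $f$ has nonempty fibers $f^{-1}(1),\dots,f^{-1}(n+1)$ compatible with $\leq_P$, and inserting a line between the realizations of consecutive fibers returns a right deck-divider shuffle; the two assignments are mutually inverse.

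The hard part will be making this bijection insensitive to the geometric freedom in drawing the Hasse diagram, since a priori the strip into which a point falls depends on the chosen planar realization and on the exact heights of the lines. The clean way around this is to regard a right deck-divider shuffle as the combinatorial datum of the induced ordered partition $(B_1,\dots,B_{n+1})$ of $P$ into nonempty blocks, and to verify that such an ordered partition is realizable by an admissible line placement if and only if it is order-compatible, meaning $x<_P y$ with $x\in B_a$ and $y\in B_b$ forces $a\le b$; these order-compatible ordered partitions are exactly the fiber partitions of surjective weak order-preserving maps, so no over- or under-counting occurs. Once this identification is pinned down, the equivalence with the earlier description as a colimit-indexing shuffle — the minimum and maximum of every maximal chain lying in $P$, and a point of $P$ separating any two elements of $\chain[n]$ — follows by reading the same ordered partition along maximal chains, and combining this bijection with the coefficient computation of the first paragraph completes the proof.
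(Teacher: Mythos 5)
Your proposal is correct, but it takes a genuinely different route from the paper's own proof. The paper argues operadically: it verifies the combinatorial interpretation on chains (the $n$-coefficient of $dd\mathcal{SH}(i)=(1-x)^{i-1}$ counts line placements in $\chain[i]$, giving $\binom{i-1}{n}$), checks that the actions of $\widetilde{\{c<d\}}$ and $\widetilde{\{c,d\}}$ preserve that interpretation --- the latter by an inclusion--exclusion count over linearizations with inserted equal signs --- and then invokes Lemma~\ref{Lemma:conditions} to propagate the interpretation along a factorization of $P$ down to the root. You instead collapse the whole statement into the coefficient identity $[x^n]\bigl((-1)^n dd\mathcal{SH}(P)\bigr)=\sum_{i=n+1}^{|P|}(-1)^{|P|-i}\binom{i-1}{n}d_{P,i}=\#\hom_{Poset^s}(P,\chain[n+1])$ via Corollary~\ref{Cor:vec} (equivalently via $\zetafs[P]=x\cdot dd\mathcal{SH}(P)$ and Lemma~\ref{Lemma:CM}; the signs do check out), and then give a direct bijection between line configurations and surjective weak order-preserving maps into $\chain[n+1]$, formalizing a configuration as an order-compatible ordered partition into $n+1$ nonempty blocks. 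Both steps are sound: your realizability argument (choose heights along a linear extension refining the block order) is the right way to make the paper's informal definition precise, and it is consistent with the paper's examples and with the chain count $\binom{i-1}{n}$. What each approach buys: the paper's proof stays inside the operadic machinery and shows the interpretation survives at every internal vertex of a factorization, which is the recurring engine of the whole paper; yours is shorter, explains conceptually why $(-1)^n dd\mathcal{SH}$ pairs with the weak surjective order series in Loeb's dictionary (right deck-divider shuffles \emph{are} surjective weak maps in disguise), and --- since neither Corollary~\ref{Cor:vec} nor your bijection uses series-parallelness --- actually establishes the statement for arbitrary finite posets, not just $SP$-posets. The only soft spot is your closing sentence identifying these configurations with the colimit-indexing description; that equivalence is asserted without proof in the paper as well and is not needed for the theorem, so it would be cleaner to flag it as a remark rather than fold it into the proof.
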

\begin{proof}

It is enough to prove that  $\widetilde{\{c,d\}}, \widetilde{\{c<d\}}$ satisfy Lemma~\ref{Lemma:conditions} for $C$ being right deck-divider shuffles. 

By construction condition $C_1^\prime$ is satisfied, and since the action of $\widetilde{\{c,d\}}$ on colimit-indexing shuffles satisfies $C_2$, then right deck-divider shuffles satisfy condition $C_2^\prime$. 

We find that the $n$-coefficient of  $dd\mathcal{SH}(i)=(1-x)^{i-1}$ count right deck-divider shuffles between the $i$ chain and the $n$ chain, see~\cite{Selcomb}. It follows that the $n$-coefficient of $(1-x)^{i+j-1}$ count right deck-divider shuffles of $\widetilde{\{c<d\}}(\chain[i],\chain[j])$ and $\chain[n]$.
 
The action $\widetilde{\{c,d\}}$ on two chains $(dd\mathcal{SH}(i),dd\mathcal{SH}(j)), i\geq j$ is defined by $$\sum_{k=0}^{j}(-1)^{j-k}{i+k\choose j}{j\choose k}(1-x)^{i+k-1}$$ (due to the operadic structure).

By inspection both actions are associative and $\widetilde{\{c,d\}}$ is commutative.
 
 Assuming $i\geq j$, there are ${i+j\choose j}{j\choose j}$ different linearizations of $\chain[i]$ and $\chain[j]$. For each one of those linearizations $A$, we have ${i+j-1\choose n}$ right deck-divider shuffles between $A$ and the $n$ chain. Assume that between the line $r$ and $r+1$ there is a point $a$ of $\chain[i]$ and a point $b$ of $\chain[j]$. In the linearization, either $a<b$ or $b<a$, but in the definition of the right deck-divider shuffle, we only care that both elements are between $r$ and $r+1$. That means that we are  double counting terms.

  To understand $\widetilde{\{c,d\}}(dd\mathcal{SH}(i),dd\mathcal{SH}(j))$, in each linearization we add equal signs between consecutive elements if one of them is labeled by an element in $\chain[i]$ and the other is labeled by an element in $\chain[j]$. In~\cite[Lemma 3.5]{posets} it is shown that we can add $j-h$ equal signs in ${i+h\choose j}{j\choose h}$ ways, and we can form  ${i+h-1\choose n}$ different right deck-divider shuffles. By inclusion-exclusion we obtain:
$\sum_{h=0}^i (-1)^{i-h}{i+h\choose j}{j\choose h}{i+h-1\choose n}$ different right deck-divider shuffles between $\widetilde{\{c,d\}}(\chain[i],\chain[j])$ and $\chain[n]$. In other words, $\widetilde{\{c,d\}}(dd\mathcal{SH}(i),dd\mathcal{SH}(j))$ preserves the combinatorial interpretation.

 Then, by Lemma~\ref{Lemma:conditions} the computation $\widetilde{P}(dd\mathcal{SH}(1),\cdots,dd\mathcal{SH}(1))$ preserves the combinatorial definition for every $SP$-poset $P$.  
\end{proof}

In Figure~\ref{ddmathcal{SH}} left we obtain the poset $\{x<1<y<2<z, 1<w<2\}$ and on the right we obtain the poset $\{x<1<y<2<z, 2<w\}$.

\begin{definition}    
For a series parallel poset $P$, define a left deck-divider shuffle between $P$ and $\chain[n]$ to be a colimit-indexing shuffle $A$ in which for every maximal chain $m\subset A$, the maximum and minimum points of $m$ are points from $\chain[n]$, and there are no two consecutive points of $P$ on $m$.\end{definition}

\begin{lemma}Let $P$ be a ${SP}$-poset, then the $n$-coefficient of $ldd\mathcal{SH}(P)$ counts left deck-divider shuffles.\label{Lemma:lsh}
\end{lemma}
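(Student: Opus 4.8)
The plan is to mirror the proof of the preceding theorem on right deck-divider shuffles and verify the hypotheses of Lemma~\ref{Lemma:conditions} with $C$ taken to be left deck-divider shuffles. By Lemma~\ref{lemma:3sh} the operadic action on the series $\{ldd\mathcal{SH}(i)\}_{i\in\mathbb{N}}$ is already pinned down: $\widetilde{\{c,d\}}$ acts as the Hadamard product and $\widetilde{\{c<d\}}$ as multiplication by $\frac{1-x}{x}$, and these operations are associative with $\widetilde{\{c,d\}}$ commutative. Condition $C_1^\prime$ holds by construction, since the $d_{P,i}$ are integers, and $C_2^\prime$ holds because the Hadamard product of two of the $ldd\mathcal{SH}(i)$ is an integer combination of them, via the same Vandermonde-type identity used in Lemma~\ref{lemmaU}. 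Hence the only genuinely new content is the fourth hypothesis, that both operations \emph{enumerate} left deck-divider shuffles, which reduces to checking the combinatorial interpretation on chains.

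For the base case I would compute the coefficient directly: since $\frac{x^{i+1}}{(1-x)^{i+1}}=\sum_{k\geq 0}\binom{k+i}{i}x^{k+i+1}$, the $n$-coefficient of $ldd\mathcal{SH}(i)$ equals $\binom{n-1}{i}$. On the combinatorial side, a left deck-divider shuffle of $\chain[i]$ with $\chain[n]$ is a linear interleaving of the $i$ points of $\chain[i]$ with the $n$ points of $\chain[n]$ that begins and ends with a point of $\chain[n]$ and has no two consecutive points of $\chain[i]$; such an arrangement is determined by choosing, among the $n-1$ gaps strictly between consecutive $\chain[n]$-points, the $i$ gaps that each receive a single $\chain[i]$-point, giving exactly $\binom{n-1}{i}$ configurations. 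In particular the generator $ldd\mathcal{SH}(1)=\frac{x^2}{(1-x)^2}$ has $n$-coefficient $n-1$, matching the $n-1$ internal gaps available to a single point.

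Next I would check the two operations against the interpretation on chains. For $\widetilde{\{c<d\}}$, the ordinal sum $\widetilde{\{c<d\}}(\chain[i],\chain[j])=\chain[i+j]$ is again a chain, and the operadic formula gives $\frac{x^{i+1}}{(1-x)^{i+1}}\cdot\frac{1-x}{x}\cdot\frac{x^{j+1}}{(1-x)^{j+1}}=ldd\mathcal{SH}(i+j)$, whose coefficient $\binom{n-1}{i+j}$ is the count supplied by the base case. For $\widetilde{\{c,d\}}$ the key observation, coming from Equation~\eqref{eq:sh}, is that a colimit-indexing shuffle of the disjoint union $\widetilde{\{c,d\}}(\chain[i],\chain[j])$ with $\chain[n]$ splits into two independent shuffles along the two maximal chains; imposing the left deck-divider constraint on each maximal chain separately forces each factor to be a left deck-divider shuffle, so the number of left deck-divider shuffles is the product $\binom{n-1}{i}\binom{n-1}{j}$, which is precisely the $n$-coefficient of the Hadamard product $\widetilde{\{c,d\}}(ldd\mathcal{SH}(i),ldd\mathcal{SH}(j))$. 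Notably, unlike the right deck-divider case, no inclusion-exclusion correction is needed here, because the bracketing condition prevents $\chain[i]$- and $\chain[j]$-points from being identified across the two chains.

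With the base case and both operations verified on chains, Lemma~\ref{Lemma:conditions} applies and yields that $\widetilde{P}(ldd\mathcal{SH}(1),\cdots,ldd\mathcal{SH}(1))$ enumerates left deck-divider shuffles of $P$ with $\chain[n]$ for every $SP$-poset $P$, which is the claim. I expect the main obstacle to be the disjoint-union step: one must argue carefully that the colimit-indexing shuffle of a disjoint union of chains genuinely decomposes as an independent pair, and that the conditions ``the maximum and minimum are points of $\chain[n]$'' and ``no two consecutive points of $P$'' descend to each maximal chain without interaction, so that the product count, and hence the Hadamard product, is exact.
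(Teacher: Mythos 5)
Your proposal is correct and takes essentially the same route as the paper's proof: both reduce the claim to verifying the hypotheses of Lemma~\ref{Lemma:conditions} on chains, checking that $\widetilde{\{c<d\}}$ produces the count $\binom{n-1}{i+j}$ and that $\widetilde{\{c,d\}}$ yields the exact product $\binom{n-1}{i}\binom{n-1}{j}$ of non-interacting per-chain shuffles, re-expanded in the basis $\{ldd\mathcal{SH}(k)\}$ via the identity~\eqref{Eq:id}. The only cosmetic difference is that you establish the base-case count $\binom{n-1}{i}$ by a direct gap-placement argument where the paper cites~\cite{Selcomb}, and you state explicitly the no-double-counting observation that the paper leaves implicit in the phrase ``pairs of shuffles that don't interact.''
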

\begin{proof}

To understand the action of an arbitrary $SP$ poset $P$ it is enough to study the action of $\widetilde{\{c,d\}}, \widetilde{\{c<d\}}$ on chains, because conditions $C_1$ and $C_2$ are satisfied.

In~\cite{Selcomb} it is shown that ${n+1\choose k}$ is the number of shuffles between $k$ and $n$ with no two elements of $k$ appearing consecutively. When we fix the first and last term to be elements of $n$ then ${n-1\choose k}$, the $n$ coefficient of $ldd\mathcal{SH}(k)$, counts the number of shuffles between $\chain[k]$ and $\chain[n]$ in which the first and last point are from $\chain[n]$ and there are no two consecutive points of $\chain[k]$. 

For chains we count ${n-1\choose i+j}$ left deck dividing shuffles between $\widetilde{\{c<d\}}(\chain[i],\chain[j])$ and $n$. Then $\widetilde{\{c<d\}}(ldd\mathcal{SH}(i),ldd\mathcal{SH}(j))$ preserves the combinatorial meaning.

Now the operation $\widetilde{\{c,d\}}(ldd\mathcal{SH}(P),ldd\mathcal{SH}(Q))$ considers pairs of shuffles that don't interact, and using \eqref{Eq:id} we conclude that the conditions of left deck-divider shuffles are preserved under both operations $\widetilde{\{c,d\}}, \widetilde{\{c<d\}}$. Then $\widetilde{P}(ldd\mathcal{SH}(1),\cdots,ldd\mathcal{SH}(1))$ counts left deck-divider shuffles of an $SP$-poset $P$ and chains.  
\end{proof}

In Figure~\ref{Fig:LsH} we display an example of a left deck-divider shuffle between a poset $\{a<c,a<d, b<c, b<d\}$ and $\chain[4]$. 

\begin{figure}
\begin{center}
\begin{tikzpicture}[scale=0.09]
\tikzstyle{every node}+=[inner sep=0pt]
\draw [black] (2.6,-44.8) circle (3);
\draw [black] (2.6,-53.7) circle (3);
\draw [black] (2.6,-53.7) circle (2.4);
\draw [black] (2.9,-4.9) circle (3);
\draw [black] (2.9,-4.9) circle (2.4);
\draw [black] (2.9,-14.8) circle (3);
\draw [black] (17.3,-35.8) circle (3);
\draw [black] (17.3,-14.8) circle (3);
\draw [black] (17.3,-53.7) circle (3);
\draw [black] (17.3,-53.7) circle (2.4);
\draw [black] (17.3,-44.8) circle (3);
\draw [black] (17.3,-44.8) circle (2.4);
\draw [black] (2.9,-25.3) circle (3);
\draw [black] (2.9,-25.3) circle (2.4);
\draw [black] (17.3,-24.4) circle (3);
\draw [black] (17.3,-24.4) circle (2.4);
\draw [black] (17.3,-4.9) circle (3);
\draw [black] (17.3,-4.9) circle (2.4);
\draw [black] (2.9,-35.8) circle (3);
\draw [black] (2.9,-35.8) circle (2.4);
\draw [black] (10.4,-35.8) circle (3);
\draw [black] (10.4,-35.8) circle (2.4);
\draw [black] (2.6,-50.7) -- (2.6,-47.8);
\fill [black] (2.6,-47.8) -- (2.1,-48.6) -- (3.1,-48.6);
\draw [black] (17.3,-50.7) -- (17.3,-47.8);
\fill [black] (17.3,-47.8) -- (16.8,-48.6) -- (17.8,-48.6);
\draw [black] (17.3,-41.8) -- (17.3,-38.8);
\fill [black] (17.3,-38.8) -- (16.8,-39.6) -- (17.8,-39.6);
\draw [black] (17.3,-32.8) -- (17.3,-27.4);
\fill [black] (17.3,-27.4) -- (16.8,-28.2) -- (17.8,-28.2);
\draw [black] (14.88,-34.03) -- (5.32,-27.07);
\fill [black] (5.32,-27.07) -- (5.68,-27.94) -- (6.27,-27.13);
\draw [black] (17.3,-21.4) -- (17.3,-17.8);
\fill [black] (17.3,-17.8) -- (16.8,-18.6) -- (17.8,-18.6);
\draw [black] (2.9,-22.3) -- (2.9,-17.8);
\fill [black] (2.9,-17.8) -- (2.4,-18.6) -- (3.4,-18.6);
\draw [black] (17.3,-11.8) -- (17.3,-7.9);
\fill [black] (17.3,-7.9) -- (16.8,-8.7) -- (17.8,-8.7);
\draw [black] (2.9,-11.8) -- (2.9,-7.9);
\fill [black] (2.9,-7.9) -- (2.4,-8.7) -- (3.4,-8.7);
\draw [black] (4.56,-42.53) -- (8.44,-38.07);
\fill [black] (8.44,-38.07) -- (7.53,-38.34) -- (8.29,-39);
\draw [black] (2.7,-41.8) -- (2.8,-38.8);
\fill [black] (2.8,-38.8) -- (2.27,-39.58) -- (3.27,-39.61);
\draw [black] (11.6,-32.8) -- (15.62,-26.88);
\fill [black] (15.62,-26.88) -- (14.75,-27.26) -- (15.58,-27.83);
\draw [black] (2.9,-32.8) -- (2.9,-28.3);
\fill [black] (2.9,-28.3) -- (2.4,-29.1) -- (3.4,-29.1);
\end{tikzpicture}
\end{center}
\begin{caption}
{A left deck-divider shuffle between $\{a<c,a<d,b<c,b<d\}$ and $\chain[4]$. The elements of the chain are shown with double circles.\label{Fig:LsH}}
\end{caption}
\end{figure}

For weak order series, one has the explicit construction of~\cite[Proposition 2.13]{posets} in which we take two weak order preserving maps $P\rightarrow \chain[n]$, $Q\rightarrow \chain[m]$, and identify the maximum of $\chain[n]$ with the minimum of $\chain[m]$ in order to define the map with inputs $f:P\rightarrow \chain[n], g:Q\rightarrow\chain[m]$ and output $f:\widetilde{\{c<d\}}(P,Q)\rightarrow \chain[n+m-1]$. For weak order series the action of $\widetilde{\{c,d\}}$ is multiplication by $\frac{x}{1-x}$. The same product occurs on left deck-divider shuffles. The operation $\widetilde{\{c<d\}}$ evaluated in left deck-divider shuffles is gluing shuffles: Let $r$ be a left deck-divider shuffle between $P$ and $\chain[n]$, and $t$ be a left deck-divider shuffle between $Q$ and $\chain[s]$. On each maximal chain $m_r$ of $r$ we glue a copy of  a maximal chain $m_t$ of $t$ by identifying the maximum of $m_r$ (which is the maximum of $\chain[n]$) with the minimum of $m_t$ (which is the minimum of $\chain[s]$). When all those glued chains can be glued to form a poset (a colimit condition), the poset is a left deck-divider shuffle in $\widetilde{\{c<d}\}(r,t)$.

Note that right deck-divider shuffles and their left version are not symmetric: the number of right deck-divider shuffles between $\chain[a]$ and $\chain[b]$ is different than the number of such shuffles between $\chain[b]$ and $\chain[a]$.

\subsection{Properties of the vectors}

\label{Sec:Third}

In this final section, we combine all the theory to describe properties of the vectors $ d_{P,i}$. We also link our work with the tensor product of operads of trees. 

The diagram in Figure~\ref{fig:my_tree} displays those algebras over the operad of finite posets that we have studied in~\cite{posets}, \cite{APV}  and the current paper. Every vertical arrow goes from top to bottom, and it is surjective. When we restrict all those algebras to the operad of series-parallel posets, then from Theorem~\ref{main:thrm} the algebra of weak order series is isomorphic to shuffle series. 

\begin{figure}[htb]
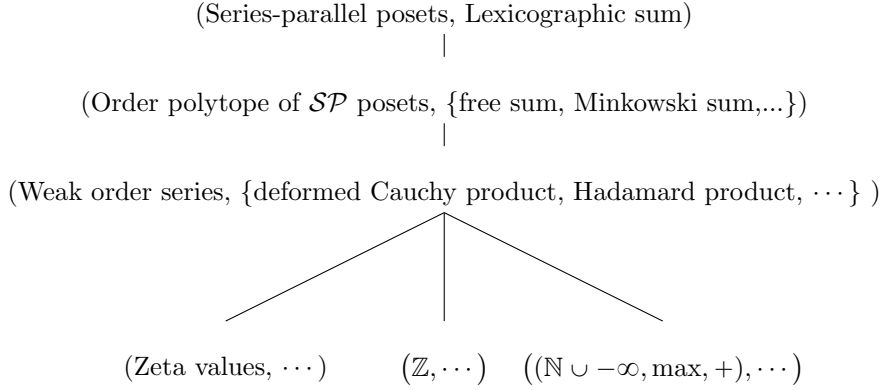
 
    \centering
    \Tree [.{(Series-parallel posets, Lexicographic sum)} 
        [.{(Order polytope of $\mathcal{SP}$ posets, \{free sum, Minkowski sum,...\})} 
             [.{(Weak order series, \{deformed Cauchy product, Hadamard product, $\cdots$\} )}
                 {(Zeta values, $\cdots$)}
            {$\big(\mathbb{Z},\cdots\big)$} 
            {$\big((\mathbb{N}\cup-\infty,\max,+),\cdots\big)$}
             ] 
         ]
      ]
    \vspace{1.5cm}
\caption{Relationship between algebras over the set operad of finite parallel posets, that are generated by one point.    \label{fig:my_tree}
}
\end{figure}
The first arrow associates to a poset its order polytope. The operation $\widetilde{\{c<d\}}$ acts on polytopes as the convex envelope of the inputs, and $\widetilde{\{c,d\}}$ is an orthogonal Minkovski sum, in which the inputs are assumed one in $R^m\subset R^{m+n}$ and the other in $R^n\subset R^{m+n}$.
The second arrow associates to a polytope its shifted Ehrhart series, equivalently, the order series of the underlying poset. 

The next arrow uses the $d_{P,i}$ coefficients of the series to define a set of numbers closely related to zeta values (see \cite{APV} 
for details). Given a vector $(d_{P,1},\cdots,d_{P,|P|})$ denote by $d_{P,i_0}$ the first non zero value. The last two arrows in the diagram map the vector $(d_{P,1},\cdots,d_{P,|P|})$ to $d_{P,|P|}$ and $d_{P,i_0}$ respectively. Here $(\mathbb{N}\cup-\infty,\max,+)$ are the integers inside the tropical numbers, see~\cite{WTP}, and Example 2.10 of \cite{APV}. 

Stanley explained that $d_{P,i}$ counts strict surjective maps from $P$ to $\chain[i]$, see~\cite{beginning}. Given a poset $P$, an order ideal is a subset $I$ so that $x\in I$ and $y\leq x$ then $y\in I$. Consider $J(P)$ the poset of order ideals of $P$/lower sets, ordered by inclusion.  Let $\mathcal{I}_P$ be the set of maximal chains in $J(P)$. If two elements $x, y$ belong to an antichain in $P$ then we can find two maximal chains, $m_{x<y}$ in which $succ(x)=y$ and $m_{y<x}$ in which $succ(y)=x$, and every other point coincide on the two chains. We declare the common boundary of those two maximal chains $m_{x<y}\prod m_{y<x}$ to be the poset with the intersection of those two linear orders, and define $|m_{x<y}\prod m_{y<x}|$ to be the height of the poset. On the diagram $\mathcal{I}_P$ we introduce the inclusion arrows: $$m_{x<y}\prod m_{y<x}\rightarrow m_{x<y},$$$$m_{x<y}\prod m_{y<x}\rightarrow m_{y<x}.$$

We iterate this process, if we constructed $m,m_2$ then they have a common boundary if there is a poset $m\prod m_2$ where there are exactly two points $x,y$ which are an antichain in $m\prod m_2$ but on $m$ we have $x<y$ and on $m_2$ we have $y<x$. The orders in $m, m_2, m\prod m_2$ coincide in any other couple of points. To $\mathcal{I}_P$ we add 
$m\prod m_2$ with the inclusion arrows:
$$m\prod m_2\rightarrow m,$$
$$m\prod m_2\rightarrow m_2.$$

If we also add identity maps then $\mathcal{I}_P$ becomes a category. We think of $\mathcal{I}_P$ as a diagram category on which we can apply functors, for example  $Poly(P)= colim_{I\in\mathcal{I}_P, |I|=n} |\Delta(n)|$ by construction.

The third level of Figure~\ref{fig:my_tree}, only remembers the statistics $d_{P,i}$ associated to a poset.

\begin{lemma} \label{Lemma:Colim}
Let $P$ be a finite poset. Then $d_{P,i}=\#\{I\in\mathcal{I}_P||I|=|P|-(i-1)\}$.

\end{lemma}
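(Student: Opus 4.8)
The plan is to deduce the statement from the combinatorial interpretation of the vector already recorded above, namely that $d_{P,i}$ is the number of strict surjective order-preserving maps $P\twoheadrightarrow\chain[i]$ (Stanley, \cite{beginning}), and then to match these maps bijectively with the elements of $\mathcal{I}_P$ in such a way that the target length $i$ corresponds to the prescribed size $|P|-(i-1)$. So the whole argument is a single grading-preserving bijection, and the only real work is bookkeeping of the grading.

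First I would describe both sides concretely. A strict surjective map $f\colon P\to\chain[i]$ is the same datum as the ordered partition of $P$ into its nonempty fibres $f^{-1}(1),\dots,f^{-1}(i)$: strictness forces each fibre to be an antichain, while order-preservation forces every initial union $f^{-1}(\{1,\dots,j\})$ to be a lower set. Hence strict surjections onto $\chain[i]$ are in bijection with flags of order ideals $\varnothing=I_0\subsetneq I_1\subsetneq\cdots\subsetneq I_i=P$ in $J(P)$ whose successive differences $I_j\setminus I_{j-1}$ are antichains. On the other side, the elements of $\mathcal{I}_P$ are, by construction, the maximal chains of $J(P)$ together with the posets obtained from them by iterated common boundaries; I would identify each such element with the flag above by declaring two points incomparable exactly when they lie in a common difference $I_j\setminus I_{j-1}$. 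A maximal chain of $J(P)$ refines $P$ into $|P|$ singleton layers, so it corresponds to the finest case $i=|P|$, whereas a single antichain layer (which occurs only when $P$ is itself an antichain) corresponds to $i=1$.

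Next I would track the grading. A common-boundary operation $m,m_2\mapsto m\prod m_2$ replaces an oppositely comparable pair $x,y$ by an incomparable one, which on the flag side fuses two adjacent layers into a single antichain layer: the number of layers $i$ drops by exactly one at each merge. Starting from a maximal chain (for which $i=|P|$) and performing $|P|-i$ merges yields an object whose associated flag has $i$ layers. To obtain the stated identity I must show that the statistic $|\,\cdot\,|$ of the definition of $\mathcal{I}_P$ is normalised so that a maximal chain is assigned $|I|=1$ and each merge raises $|I|$ by one, giving
\[
|I| \;=\; 1+\bigl(|P|-i\bigr)\;=\;|P|-(i-1)
\]
on every object lying over a map onto $\chain[i]$. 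Equivalently, writing $\ell(I)$ for the length of the longest chain of the layered poset $I$, one has $\ell(I)=i$ and the claim becomes $|I|=|P|+1-\ell(I)$, the complementary grading to the one used in the colimit description preceding the statement.

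Finally I would verify that the assignment is a genuine bijection onto $\mathcal{I}_P$: every flag with antichain differences is reached from some linear extension by fusing its nonsingleton layers one covering pair at a time, and two linear extensions coarsen to the same flag precisely when they differ by transpositions inside those layers, so the iterated common-boundary construction produces each layered poset exactly once. I expect the \emph{main obstacle} to be this grading normalisation rather than the bijection itself: one must pin down that the height statistic $|\,\cdot\,|$ increments by exactly one at each merge (so that it equals $|P|+1-\ell(I)$, not $\ell(I)$), and one must handle the degenerate extremes — the single-layer object, present only for an antichain $P$, and the empty flag for $P=\varnothing$. Once the bijection is shown to be grading-preserving, comparison with Stanley's count — or, equivalently, reading off the multiplicity of the simplex $\Delta_{\,|P|-(i-1)}$ in the inclusion–exclusion expansion of Lemma~\ref{IEP} — yields $d_{P,i}=\#\{I\in\mathcal{I}_P:\,|I|=|P|-(i-1)\}$, as required.
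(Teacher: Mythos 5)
Your route is genuinely different from the paper's. The paper disposes of Lemma~\ref{Lemma:Colim} in two lines: it cites \cite{two} for the fact that the simplices occurring in the canonical triangulation of $Poly(P)$ are indexed by the maximal chains of $J(P)$ together with their common boundaries, and then reads the count directly off the inclusion--exclusion expansion of Lemma~\ref{IEP}, in which $d_{P,i}$ is by definition the number of copies of $\Delta_i$. You instead pass through Stanley's interpretation $d_{P,i}=\#\{\hbox{strict surjections } P\rightarrow\chain[i]\}$ --- legitimate here, since Lemma~\ref{IEP} gives $\somega[n]{P}=\sum_i d_{P,i}{n\choose i}$ and linear independence of the binomials identifies $d_{P,i}$ with the surjection count of \cite{beginning} --- and then match surjections with elements of $\mathcal{I}_P$ by the fibre-flag bijection (fibres are antichains, initial unions are lower sets). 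Your version buys something the paper's proof silently omits: the grading bookkeeping. The paper never reconciles its definition of $|\cdot|$ as ``height'' with the statement $|I|=|P|-(i-1)$; indeed Lemma~\ref{lem:11} and Remark~\ref{RM:vectors} use the opposite convention $|I|=i$. Your normalisation, $|I|=1+(\hbox{number of merges})$ so that a maximal chain has $|I|=1$, is exactly the reading under which the present statement is true, and it matches the paper's worked example, where the two-layer posets are described as ``obtained after intersecting three chains''.

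There is, however, one step that fails as written: the claim that every flag with antichain differences ``is reached from some linear extension by fusing its nonsingleton layers one covering pair at a time'' via the iterated common-boundary moves. The paper's move requires two \emph{already constructed} posets agreeing everywhere except on a single oppositely oriented pair, and this cannot build an antichain layer of size three or more. Take $P=\{a,b,c\}$ an antichain: from $\{a,b\}<c$ the required partner would keep $a,b$ incomparable and reverse exactly one of $a<c$, $b<c$, which violates transitivity and so is not a poset; the intermediate object with the single relation $a<b$ and $c$ incomparable to both is not layered and is itself unreachable by the same obstruction. Hence the one-layer element $\{a,b,c\}$ --- which must lie in $\mathcal{I}_P$, since $d_{P,1}=1$ by $n^3=6{n\choose 3}+6{n\choose 2}+{n\choose 1}$ --- is not produced by pairwise merges. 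The repair is to read $\mathcal{I}_P$ as the common boundaries of arbitrary finite families of maximal chains, equivalently the chains in $J(P)$ with antichain successive differences; this is what the paper's own example does when it lists $q<\{x,r,s\}$ as ``obtained after intersecting three chains'', and what the indexing from \cite{two} supplies, and with that reading your bijection and grading go through verbatim. One further slip: at the end you speak of the multiplicity of $\Delta_{|P|-(i-1)}$ in Lemma~\ref{IEP}; the simplex attached to an element with $i$ layers is $\Delta_i$ and $d_{P,i}$ is its multiplicity --- the quantity $|P|-(i-1)$ is the value of the statistic $|I|$, not a dimension.
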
\begin{proof} 

The simplices in the cannonical triangulation of $Poly(P)$ are indexed by maximal chains in $J(P)$, and their common boundaries~\cite{two}. The results follows by Lemma~\ref{IEP}. 
\end{proof}

At the combinatorial level of Figure~\ref{fig:my_tree} the vectors lost too much information as we can see by the existence of Doppelg\"angers. In contrast, at the second level we can extract the diagram $I_P$ that encodes how to glue the chains. 

\begin{lemma} For every finite poset $P$ we have $P= colim_{I\in\mathcal{I}_P, |I|=n} \chain[n]$.\end{lemma}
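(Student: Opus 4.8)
The plan is to prove the statement by verifying the universal property of the colimit directly, running the same gluing that produces $Poly(P)$ one combinatorial level down. Recall that the indexing category $\mathcal{I}_P$ has as objects the maximal chains of $J(P)$ together with all their iterated common boundaries, and as morphisms the inclusion arrows $m\prod m_2\to m$; this is precisely the diagram for which $Poly(P)=\operatorname{colim}_{I\in\mathcal{I}_P,\,|I|=n}|\Delta(n)|$ holds by construction, with top-dimensional simplices indexed by the maximal chains. The functor now under consideration sends each object $I$ to the chain $\chain[|I|]$, so the maximal chains, which are exactly the linear extensions of $P$, are sent to copies of $\chain[n]$ on the common vertex set, and each inclusion arrow is sent to the corresponding face map of chains. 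First I would make the objects explicit: a maximal chain $\varnothing=I_0\subset\cdots\subset I_n=P$ of $J(P)$ is recorded by the singleton differences $I_k\setminus I_{k-1}=\{v_k\}$, hence is the linear extension $v_1<\cdots<v_n$, while a common boundary $m_{x<y}\prod m_{y<x}$ carries the order $\le_{m_{x<y}}\cap\le_{m_{y<x}}$, in which $x,y$ become incomparable. Since every structure map fixes the vertex set, the underlying set of the colimit is that of $P$, and all the content of the lemma is concentrated in the order relation.

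The heart of the argument is to identify the order of the colimit with $\le_P$ by invoking the realizer theorem: a relation $a\le b$ holds in $P$ if and only if it holds in \emph{every} linear extension of $P$, whereas any incomparable pair $\{x,y\}$ is reversed between the two extensions $m_{x<y}$ and $m_{y<x}$. The relations forced into the colimit are exactly those common to all the chains in the diagram, which by the realizer theorem is $\le_P$, and incomparability is preserved because the boundary object $m_{x<y}\prod m_{y<x}$ is itself present in $\mathcal{I}_P$ and records that $x,y$ carry no relation. With the order pinned down, universality is immediate: any competing receiving object must agree with the maps out of each individual chain, hence is determined on the vertex set and on precisely the relations common to all chains, which is the defining data of $P$. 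Lemma~\ref{Lemma:Colim} and the canonical triangulation then let me match this computation term by term with the geometric colimit producing $Poly(P)$.

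The main obstacle I anticipate is exactly the one that also makes the geometric statement delicate: arranging the colimit so that it recovers \emph{exactly} $\le_P$ rather than an over-identified quotient. An incomparable pair $\{x,y\}$ is comparable inside each single chain, and in opposite directions in $m_{x<y}$ and $m_{y<x}$, so a careless gluing would force a relation by transitivity and even collapse $x$ and $y$ by antisymmetry; consequently the comparison maps cannot be taken to be the naive identity-on-vertices maps, and fixing the precise category and variance in which the colimit is formed is the real work. The point to nail down is that the boundary object $m_{x<y}\prod m_{y<x}$ sits in the diagram precisely to mediate this gluing and keep $x,y$ incomparable, exactly as the shared facet $\{f(x)=f(y)\}$ prevents the two top simplices from over-gluing in $Poly(P)$. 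Once this mediation is verified to be faithful, the remaining steps are routine and parallel the polytope computation.
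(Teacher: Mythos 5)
Your proposal stalls at exactly the step you yourself flag as ``the real work,'' and the assertion you plan to use there is the wrong universal property. The order you attribute to the colimit --- ``the relations forced into the colimit are exactly those common to all the chains in the diagram,'' i.e., the intersection of the linear extensions --- is the description of the \emph{limit} of this diagram, not the colimit: a cone from a test poset $R$ into all the chains, compatible over the boundary objects, is the same thing as a single set map that is order-preserving into every linear extension, hence into their intersection $\leq_P$ by the realizer theorem. For a \emph{colimit} the variance runs the other way: every leg $\chain[n]\rightarrow \mathrm{colim}$ must be order-preserving, so the colimit order contains the \emph{union} of the pushed-forward chain orders; with identity-on-vertices structure maps, the legs out of $m_{x<y}$ and $m_{y<x}$ force both $x\leq y$ and $y\leq x$, and antisymmetry then collapses the incomparable pair. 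Your proposed rescue --- that the boundary object $m_{x<y}\prod m_{y<x}$ ``mediates'' the gluing and keeps $x,y$ incomparable --- cannot work as stated: an object of a diagram can only \emph{add} identifications and relations to a colimit, never subtract them, so its presence does not veto the relations imposed by the cocone legs from the two maximal chains. This is precisely where your analogy with $Poly(P)$ breaks down: in $\topo$ the shared facet governs which \emph{points} are glued, whereas here the offending relations come from the order-preservation requirement on the legs, which no shared face can block. Your uniqueness argument misquantifies in the same way: a cocone map must preserve \emph{every} relation of \emph{every} chain, not only the common ones, which under the naive reading forces $f(x)=f(y)$ on incomparable pairs. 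Since you explicitly defer ``fixing the precise category and variance in which the colimit is formed,'' the central step of your proof is asserted rather than proved.

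Contrast this with the paper's proof, which never computes the colimit intrinsically. Given an arbitrary compatible family $f_I:I\rightarrow Q$, it defines $i(x):=f_{I_x}(x)$ for any maximal chain $I_x$ through $x$, observes that this value is independent of the chosen chain (the $f_I$ agree pointwise via the boundary objects), and then checks that $i$ preserves $\leq_P$ using only the one direction of the realizer fact you also invoke, namely that $x\leq_P y$ implies $x\leq y$ in \emph{every} linear extension. The mediating map is built elementwise from the cocone data, so the delicate question of which order the abstract colimit carries --- the question your draft reduces everything to and then leaves open --- is never confronted. If you want to keep your intrinsic computation, note that what it actually establishes is that $P$ is the \emph{limit} of the diagram (equivalently, a colimit in $\mathbf{Poset}^{op}$); to prove the lemma along the paper's lines you should abandon the intrinsic description and construct the comparison map $i$ directly as above.
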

\begin{proof}
Let $Q$ be a finite poset and consider maps $f_I:I\rightarrow Q$ for all $I\in \mathcal{I}_P$.

For every $x\in P$ pick a maximal chain $I_x\subset J(P)$ that contains $x$, define $i(x):=f_{I_x}(x)$. If $x\in P$ and $y\in Succ(x)$; then, there is at least one maximal chain $I_{x<y}$ in which $x<y$. In fact, the inequality holds in every maximal chain. 
Since $f_{I_{x<y}}$ preserves the order, and since $f_{I_{x<y}}(x)=f_{I_{x}}(x), f_{I_{x<y}}(y)=f_{I_{y}}(x)$ then $i(x)\leq i(y)$. This means that $i:P\rightarrow Q$ preserves the order. By the universal property of the colimit,  $colim_{I\in\mathcal{I}_P, |I|=n} \chain[n]= P,$ where the maps from the colimit to the poset are inclusion.
\end{proof}

If follows that $dim(P)\leq d_{P,|P|}$, as the dimension of a poset is the minimun number of linear orders whose intersection define the poset. For $SP$-posets the dimension is 2, while $d_{P,|P|}$ is arbitrarily large.

\begin{example}
Consider the poset $P=\{x, q<r,q<s\}$. Since $\mathcal{SH}(P)=8\mathcal{SH}(4)-9\mathcal{SH}(3)+2\mathcal{SH}(2)$, then $J(P)$, the poset of ideals/lower sets of $P$,  has $8$ maximal ideals: 
\begin{eqnarray*}
x<q<r<s,& x<q<s<r,\\
q<x<r<s,& q<x<s<r,\\
q<r<x<s,& q<s<x<r,\\
q<r<s<x,& q<s<r<x.\\    
\end{eqnarray*}

Now, for a pair of elements $a,b$ in antichains, elements that are not related, both possibilities $succ(a)=b$ and $succ(b)=a$ occur in some linear order/shuffle. The intersection of two linear orders, that only differ on which of the two elements $a,b$ is successor of the remaining, is a poset with length 3:

\begin{eqnarray*}
\{x,q\}<r<s,& \{x, q\}<s<r,\\
q<\{x,r\}<s,& q<s<\{x,r\},\\
q<r<\{x,s\},& x<q<\{x,s\}<r,\\
x<q<\{r,s\},& q<x<\{s,r\},\\
q<\{s,r\}<x,&
\end{eqnarray*}

we already know there are 9 of them, as the coefficient of $\mathcal{SH}(3)$ counts those posets.

Finally, we have 2 posets obtained after intersecting three chains in $J(P)$:

\begin{eqnarray*}
\{x,q\}<\{r,s\},& q<\{x,r,s\}.
\end{eqnarray*}
\end{example}

From Ehrhart's theory point of view, the Ehrhart polynomial of $Poly(P)$ counts lattice points inside expansions of the polytope. The interpretation of the vectors $(d_{P,1},\cdots,d_{P,n})$ of the order series $\zetafp[P]$ is that they count simplices in the canonical triangulation of $Poly(P)$, see~Lemma~\ref{IEP}. For example, in the case of $P=\{x, q<r,q<s\}$ we have $8$ $4$-simplices and $9$ pairwise intersections of maximal simplices on $3$-simplices. There are $2$ triple intersections of maximal simplices on $2$-simplices. 

The Ehrhart polynomial of $Poly(P)$ determines the order series of $P$ and vice versa. Then, knowing the number of lattice points inside every expansion of a polytope is equivalent to knowing the statistics of the canonical triangulation of the polytope. To be precise, the statistics that are preserved are all the numbers $d_{P,i}=\#\{I\in\mathcal{I}_P||I|=i\}$.

\begin{lemma}\label{lem:11} Let $P$ be a finite poset. Then $d_{P,i}=\#\{I\in\mathcal{I}_P,|I|=i\}$  counts linear orders on the poset obtained after we identify $|P|-i$ pairs of variables of the poset that are not related under $\leq_P$.
\end{lemma}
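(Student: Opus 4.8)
The plan is to prove the stated combinatorial interpretation by routing it through Stanley's characterization of the vector entries, which is available to us: $d_{P,i}$ equals the number of strict surjective order-preserving maps $f\colon P\to\chain[i]$, equivalently the quantity $\#\{I\in\mathcal{I}_P:|I|=i\}$ recorded in the statement. I would then exhibit a bijection between such maps $f$ and the data consisting of an identification of $|P|-i$ pairs of pairwise-unrelated elements of $P$ together with a linear order on the resulting quotient poset, so that the statement becomes a repackaging of Stanley's count.

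First I would record the fiber decomposition. A strict surjective map $f\colon P\to\chain[i]$ partitions $P$ into the $i$ nonempty fibers $f^{-1}(1),\dots,f^{-1}(i)$. Each fiber is an antichain of $P$: if $x<_P y$ lay in the same fiber then strictness would force $f(x)<f(y)$, contradicting $f(x)=f(y)$. Collapsing each fiber to a point therefore performs exactly $\sum_{j}(|f^{-1}(j)|-1)=|P|-i$ identifications, each of a pair of elements that are not related under $\le_P$ (this is the meaning of "identify $|P|-i$ pairs"; a block of size $k$ is an antichain and accounts for $k-1$ such merges). The order induced on the $i$ blocks is a partial order $\bar P$, namely the quotient poset obtained after the identifications, and the labeling "$j$th fiber $\mapsto$ level $j$" is precisely a linear extension of $\bar P$, that is, a linear order on the quotient. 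Conversely, a partition of $P$ into $i$ antichains together with a linear extension of the induced quotient reconstructs a unique strict surjective map by reading off levels; strictness is automatic since comparable elements never share a block and are separated in the correct direction by any linear extension of $\bar P$. This is the desired bijection.

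Finally I would reorganize the count to match the phrasing of the statement: summing over the unordered antichain partitions obtained by identifying $|P|-i$ unrelated pairs, and for each counting the linear orders on the resulting quotient poset, yields every strict surjective map exactly once, because an ordered antichain partition determines and is determined by its underlying partition and its block-ordering. Partitions whose quotient carries a cyclic block-order simply admit no linear extension, so they drop out automatically. Hence this sum equals $d_{P,i}$, as claimed. The main obstacle is not the bijection itself but making the informal phrase "identify $|P|-i$ pairs of unrelated variables" precise: one must check that the admissible identifications are exactly those whose blocks are antichains (so that a block of size $\ge 3$ requires all, not merely consecutively chosen, pairs to be unrelated), and that "linear orders on the poset obtained" is read as linear extensions of the quotient summed over all such identifications. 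The computation for $P=\{x,\,q<r,\,q<s\}$, giving $2+2+2+3=9=d_{P,3}$ and $1+1=2=d_{P,2}$, is the sanity check I would use to pin down these conventions.
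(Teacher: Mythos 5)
Your proof is correct, but it takes a genuinely different route from the paper. The paper argues geometrically: by Definition~\ref{Def:J} the maximal simplices of the canonical triangulation of $Poly(P)$ correspond to linearizations of $P$, the intersection of two maximal simplices is the locus where two coordinate values coincide (i.e.\ where two $\leq_P$-unrelated points are identified), so the lower-dimensional faces indexed by $\mathcal{I}_P$ are linear orders on quotients of $P$, and the count then follows from the Inclusion--Exclusion Lemma~\ref{IEP}. You instead bypass the polytope entirely: you start from Stanley's interpretation $d_{P,i}=\#\{\text{strict surjective maps } P\to\chain[i]\}$ (which the paper records separately, citing~\cite{beginning}) and build an explicit bijection between such maps and pairs (partition of $P$ into $i$ antichain blocks, linear extension of the quotient), using that fibers of a strict map are antichains and that the level labeling is exactly a linear extension. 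Your approach buys precision that the paper's terse geometric sketch lacks: in particular you correctly isolate the point that makes the informal phrase ``identify $|P|-i$ pairs of unrelated variables'' true as stated --- the admissible identifications are those whose \emph{blocks} are antichains, with a block of size $k$ counted as $k-1$ merges; if one instead merged pairs only pairwise-unrelated in $P$, a block like $\{a,b,c\}$ with $a<_P c$ (via unrelated pairs $(a,b)$ and $(b,c)$) would wrongly contribute, e.g.\ giving $1\neq 0=d_{P,1}$ for $P=\{a<c,\,b\}$. What the paper's route buys in exchange is the identification with the set $\mathcal{I}_P$ and the triangulation, which is what the rest of Section~\ref{Sec:Third} actually uses (colimit descriptions and the link to maximal operations in the Boardman--Vogt tensor product); your bijection establishes the enumerative content but would still need Lemma~\ref{Lemma:Colim}/Lemma~\ref{IEP} to connect the count to $\#\{I\in\mathcal{I}_P,\,|I|=i\}$. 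Your sanity check on $P=\{x,\,q<r,\,q<s\}$, giving $2+2+2+3=9=d_{P,3}$ and $1+1=2=d_{P,2}$, matches the paper's example exactly.
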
\begin{proof}
The coordinates of the simplices in the order polytope are described by linearizations of the poset.
The intersection of two maximal simplices is a simplex of lower dimension describing the space in which two coordinate values coincide, that is the coordinates satisfy $x<y$ on one simplex and $x>y$ on the other. Then those boundary links correspond to posets in which points are identified. The result follows from Lemma~\ref{IEP}.

\end{proof}

We now introduce notation from~\cite[Chapter 1.1, Chapter 1.6, Chapter 4.2]{SandDHT}, see the original references for details. A colored operad $P$ consist of a set $C$ of colours and for each $n\geq 0$, $c_1,\cdots,c_n,c$ colours, a set $P(c_1,\cdots,c_n;c)$ of operations, satisfying structural relations. We also assume actions of $S_n$ on the operations representing permutations of the inputs, those actions should satisfy some compatibility conditions.

When the set of colors is a singleton, we denote $P(c;c)$ by $P(1)$, $P(c,c;c)$ by $P(2)$, etc. The operad of finite posets is an example of an operad with one color.

 A tree $T$ defines an operad whose colors are the edges of $T$, given a sequence of colors $e_1, \cdots, e_n, e$ the set of operations $\Omega(e_1,\cdots, e_n;e)$ is empty or it is a point. The nontrivial case occurs when there is a subtree of $T$ with leaves $e_1,\cdots, e_n$, and root $e$. The category $\Omega$ has as objects trees, and operadic maps as morphism, in other words, a morphism sends edges to edges and vertices to subtrees. 

Let $T$ be a reduced tree, the result of pruning a tree and we put one leaf on each external vertex. Let $U(T)$ be the poset of vertices of $T$. What is the interpretation of $d_{T,i}$ in the study of $\Omega(T)$?

First, we take the point of view of linearizations of posets. To put a linear order on the vertices of $U(T)$ is equivalent as to choose a path along a maximal simplex in $Poly(U(T))$ covering all vertices and preserving their order. If one aims to implement any maximal composition of functions in a program, in such a way that one function is evaluated at a time, as in the Reverse Polish Notation, then we need to ensure we order the functions in a compatible way to the order of the vertices of $\Omega(T)$, for example, if function $e$ needs the output of function $f$ then function $f$ is evaluated first.
To an order preserving path along each vertex of a maximal simplex in the canonical triangulation of $U(T)$, we associate sequences of composition of operations of $\Omega(T)$. 

For example, consider the tree with poset of vertices $\{ q<r,q<s\}$. We think of ${q}$ as a binary operation and ${r,s}$ as unary operations.
The order polytope of the poset of vertices can be reparametrized to look like the cone of a square, a pyramid with square base $\{(r,s,q)| 0\leq q,r,s\leq1, q\leq \min\{r,s\}\}$. The path $(0,0,0)-(1,0,0)-(1,1,0)-(1,1,1)$ is then indicating the sequence of functions ${r}$, ${s}$, ${q}$. 
Assume you have two elements of an antichain in the poset of vertices of $T$, an $s$-ary operation and a $t$-ary operation. Then, they can be considered an $s+t$-ary operation. The number of linearizations where we allow $k$ such new operations is $d_{P,|P|-k}$.

Given two operads $\mathcal{P},\mathcal{Q}$, with sets of colors $C,D$ respectively, the tensor product of the operads $\mathcal{P}\otimes \mathcal{Q}$ is the operad with colors $c\otimes d$ for $c\in C, d\in D$. The operations are generated by the following rules:
\begin{itemize}
\item for each $p\in\mathcal{P}(c_1,\cdots,c_n;c)$ and colour $d\in D$ of $\mathcal{Q}$, there is an operation $$p\otimes d\in (c_1\otimes d,\cdots c_n\otimes d;c\otimes d),$$ 
\item for each colour $c\in C$ of $\mathcal{P}$
 and operation $q\in \mathcal{Q}(d_1,\cdots,d_m;d)$, there is an operation $$c\otimes q\in (c\otimes d_1,\cdots c\otimes d_m;c\otimes d).$$ 
\end{itemize}
We require that for any color $d\in D$ of $\mathcal{P}$ the map $-\otimes d:\mathcal{P}\rightarrow \mathcal{P}\otimes \mathcal{Q}$ is an operadic map,  for any color $c\in C$ of $\mathcal{Q}$ the map $c\otimes-:\mathcal{Q}\rightarrow \mathcal{P}\otimes \mathcal{Q}$ is an operadic map, and the Boardman-Vogt interchange relation: $$p\otimes d(c_1\otimes q,\cdots,c_n\otimes q)=\sigma_{n,m}^\ast\left((c\otimes q)(p\otimes d_1,\cdots,p\otimes d_m)\right),$$
here the permutation $\sigma_{n,m}$ relates the sequences $(c_1\otimes d_1,\cdots, c_1\otimes d_m,\cdots,c_n\otimes d_1,\cdots,c_n\otimes d_m)$ and $(c_1\otimes d_1,\cdots, c_n\otimes d_1,\cdots,c_1\otimes d_m,\cdots,c_n\otimes d_m)$.

Given a tree $T$, its classifying spaces $BT$ is obtained by considering the poset $E(T)$ of \textit{edges} of $T$ as a category, then the geometric realization of the nerve of this category is $BT$.

The paper~\cite{ShuffleTree} and the book~\cite{SandDHT} describe a diagram $\mathcal{I}_{sh}$ obtained by considering shuffles of $S,T$ and their intersections. 

\begin{lemma*}[{\cite[Proposition 5.1]{ShuffleTree}, \cite[Remark 5.2]{ShuffleTree}, and \cite[Proposition 4.10]{SandDHT}}] Consider trees $S,T$. Then, 
\begin{itemize}
\item $B(S)\times B(T)=colim_{A\in\mathcal{I}_{sh}}B(A)$,
\item $\Omega(S)\otimes_{BV}\Omega(T)=colim_{A\in\mathcal{I}_{sh}}\Omega(A)$.
\end{itemize}
\end{lemma*}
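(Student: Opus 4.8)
The plan is to treat both identities together, proving the topological one first as a geometric model and then bootstrapping the operadic one from the same combinatorics, reducing each to the colimit decompositions already established in the paper.

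First I would prove $B(S)\times B(T)=\mathrm{colim}_{A\in\mathcal{I}_{sh}}B(A)$. Since geometric realization of a nerve commutes with finite products, $B(S)\times B(T)$ is the order complex $|N(E(S)\times E(T))|$ of the product poset of edges. Each shuffle $A$ carries an edge-labeling $E(A)\to E(S)\times E(T)$, inducing a map $B(A)\to B(S)\times B(T)$; these maps are compatible with the boundary maps $m\prod m_2\to m,\,m_2$ that define $\mathcal{I}_{sh}$, so they assemble into a comparison map out of the colimit. To see it is a homeomorphism I would identify the maximal shuffles with the maximal chains (linear extensions) of $E(S)\times E(T)$, and their common boundaries with the intersections of the corresponding maximal simplices, exactly as in the canonical triangulation used earlier for $Poly(P)$. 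Via Lemma~\ref{Lemma:reconciliation} and the branch-wise description of shuffles (condition (4) of the shuffle definition), this reduces to the classical staircase triangulation of a product of simplices, which is precisely the colimit decomposition of a product poset already invoked in the paper.

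Next I would establish the operadic identity. Each shuffle $A$ determines an operad map $\Omega(A)\to\Omega(S)\otimes_{BV}\Omega(T)$, sending an edge colored $(s,t)$ to the color of $\Omega(S)\otimes_{BV}\Omega(T)$ indexed by that pair and each vertex-operation to the appropriate composite of the generating operations $p\otimes d$ and $c\otimes q$; compatibility with the boundary maps of $\mathcal{I}_{sh}$ again yields a canonical map $\mathrm{colim}_{A}\Omega(A)\to\Omega(S)\otimes_{BV}\Omega(T)$. I would then prove this is an isomorphism in two steps. For surjectivity I would use the Boardman-Vogt interchange relation to rewrite an arbitrary operation of the tensor product into a normal form in which the two factors are fully interleaved, placing it in the image of some $\Omega(A)$. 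For injectivity I would check that the only identifications forced in $\Omega(S)\otimes_{BV}\Omega(T)$ beyond those internal to a single $\Omega(A)$ are instances of the interchange relation, and that each such instance is exactly the identification made by one boundary map $m\prod m_2\to m,\,m_2$ of $\mathcal{I}_{sh}$, namely the swap of the relative order of two operations forming an antichain.

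The main obstacle is this injectivity in the operadic part: one must show that the combinatorics of common boundaries of shuffles --- swapping which of two incomparable operations is applied first --- accounts for exactly the single generating interchange relation of the Boardman-Vogt product, with no relations missing and none spurious. This is the crux, because the colimit a priori imposes only the relations visible in $\mathcal{I}_{sh}$, so the proof hinges on matching that diagram's codimension-one boundaries with the defining relation of $\otimes_{BV}$. Once the operadic statement is in hand, the topological statement follows either formally, by applying the classifying-space construction to the operadic colimit, or directly by the more geometric argument sketched above.
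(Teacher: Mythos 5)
First, a point of calibration: the paper does not prove this lemma at all --- it is imported verbatim from \cite{ShuffleTree} and \cite{SandDHT}, as the bracketed citations indicate --- so the benchmark for your proposal is the cited proofs, which analyze the tensor product of tree operads via percolation moves and normal forms. Your overall architecture (a comparison map out of the colimit, surjectivity by rewriting with the interchange relation, injectivity by matching codimension-one boundaries of $\mathcal{I}_{sh}$ with instances of the Boardman--Vogt relation) is aligned with that strategy. However, there is a concrete error in your topological argument: the identification of ``maximal shuffles with the maximal chains (linear extensions) of $E(S)\times E(T)$'' is false unless both trees are linear. A shuffle $A$ is a \emph{tree}, not a chain; $B(A)$ is a union of many maximal simplices of $|N(E(S)\times E(T))|$, one staircase simplex for each pair of leaves of $S$ and $T$, constrained by condition (4) to agree along shared branch segments. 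In particular distinct shuffles share maximal simplices, so the first bullet is not a triangulation statement indexed by shuffles: its entire content is the global gluing claim that $\mathcal{I}_{sh}$ records \emph{all} intersections and that the comparison map is injective. Your branch-wise reduction to the classical staircase triangulation shows only that the $B(A)$ cover the product, which is strictly weaker than the colimit identity.

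On the operadic side you correctly name the crux (injectivity) but do not supply it, and two ingredients are genuinely missing. One is a connectivity statement --- any two shuffles are linked by elementary percolation moves, the analogue of the reconfiguration argument in Lemma~\ref{lemma:WD} --- without which the diagram $\mathcal{I}_{sh}$ might fail to propagate identifications between distant shuffles. The other, more delicate, is that a colimit only identifies operations that are images of a common operation under a zigzag in the diagram: the two composites equated by the interchange relation live in two different shuffles $A$ and $A'$, and it must be shown that the boundary object joining $A$ and $A'$ actually carries an operation mapping to both sides; this is not automatic from the topological picture of common faces, and it is exactly where the proofs in \cite{ShuffleTree} and \cite{SandDHT} do real work with an explicit presentation of $\Omega(S)\otimes_{BV}\Omega(T)$. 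Finally, your closing claim that the first bullet ``follows formally by applying the classifying-space construction to the operadic colimit'' is unjustified: geometric realization does preserve colimits, but you would also need that $B$ converts $\otimes_{BV}$ into the cartesian product, and that statement is essentially equivalent to the first bullet rather than a formal consequence of the second.
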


In the case of a reduced tree $T$, we studied $Poly(U(T))$ as a colimit of simplices over a diagram generated by certain products of \emph{linearlizations} of the poset $U(T)$.

If $U(T), U(S)$ are both chains, then to construct maximal operations in $\Omega(S)\otimes_{BV}\Omega(T)$ we consider an array where the $x$ axis is labeled by the operations in $T$ and the $y$ axis by the operations in $S$. Then the enumeration of paths from the left corner to the right corner, the shuffles, generate maximal operations. Note that the maximal simplices in the Minkovski sum $\{c,d\}(Poly(U(S)),Poly(U(T)))$ are created with the same rule, then 
$$d_{Poly\big(\{c,d\}\left(U(S),U(T)\right)\big) ,|U(S)|+|U(T)|}=\#\hbox{maximal operations in } \Omega(S)\otimes_{BV}\Omega(T),$$ (see~\cite[Example 4.4]{SandDHT}).

 If we have operations that satisfy the BV relation $$p\otimes d(c_1\otimes q,\cdots,c_n\otimes q)=\sigma_{n,m}^\ast\left((c\otimes q)(p\otimes d_1,\cdots,p\otimes d_m)\right),$$ we can construct instead a $n+m$ operation in two isomorphic ways according to the planar structure. 

Then
$d_{Poly\big(\{c,d\}\left(U(S),U(T)\right)\big) ,|P|-i}=\#\hbox{maximal operations in } \Omega(S)\otimes_{BV}\Omega(T),$ where we group $i$ times operations that satisfy the BV relation, identifying two functions if they differ by the choice of a plannar structure.

Let $T,S$ be reduced trees so that $U(T)=\chain[1]$ and $U(S)=\{c<d\}(\{c,d\}(\chain[1],\chain[1]),\chain[1])$.
Then the number of maximal simplices in $Poly(\{c,d\}(U(S),U(T)))$ is eight, while the number of shuffles in $B(T)\times B(S)$ is five, and from~\cite[Proposition 4.10]{SandDHT} the number of maximal operations in $\Omega(T)\otimes_{BV}\Omega(S)$ is five.

A dendroidal set is a functor $\Omega^{op}\rightarrow Set$.
In~\cite[Section 4.2]{SandDHT} there is a construction of a dendroidal set induced by a linear tree $P=[n]$,

$$\mathcal{E}(P):\Omega^{op}\rightarrow Sets$$ that sends $T\in \Omega$ into
$\hom_{\hbox{Weak poset}}({E}(T),P)$ where $E(T)$ is the poset of \textit{edges} of $T$.

\begin{remark}\label{RM:vectors}
Given a poset $P$, with $\mathcal{SH}(P)=\sum_{i=1}^n (-1)^{n-i}d_{P,i}\mathcal{SH}(i)$, we obtain the following information about the vectors $d_{P,i}$:
\begin{itemize}
\item When $P$ is the union of points, the vectors are related with Stirling numbers of the second kind $S(n,i)$ via $d_{P,i}=i!S(n,i)$~\cite[Theorem 5.6]{EulerandStirling}.
\item When $P$ is Wix\'arika poset, the vectors encode topological information (the Betti number) of the poset~\cite[Proposition 3.1, Proposition 3.3]{posets}. 
\item If $U(S), U(T)$ are chains, then 
$d_{Poly\big(\{c,d\}\left(U(S),U(T)\right)\big) ,|P|-i}=\#$ maximal operations/  planar structure in  $\Omega(S)\otimes_{BV}\Omega(T),$ where we group $i$ times operations that satisfy the BV relation.
\item The vector $d_{P,i}$ counts the number of surjective strict order preserving maps from $P$ to $\chain[i]$~\cite[Theorem 1]{beginning}.

\item For every tree $T$ and for every $n$, $$\#\mathcal{E}(\chain[n])_T=\sum_{i=1}^{|T|} (-1)^{|T|-i}d_{E(T),i}\multiset{n+1}{i}$$
where $E(T)$ is the poset of {edges} of $T$.

\item If $\mathcal{I}_P$ is the set constructed by the inclussion exclussion principle in Lemma~\ref{IEP}, then $d_{P,i}=\#\{I\subset\mathcal{I}_P| |I|=i\}.$ 

\item The polynomial  $\sum_{i=1}^{n}(-1)^{n-i}d_{P,i}\multiset{x+1}{i}$ evaluated at $n$ counts the number of shuffles between $P$ and a tree with $n$ vertices, that is, the number of maximal operations in the tensor product of $\Omega(P)$ and a linear tree with $n$ vertices.

\end{itemize}

\end{remark}

\section{Open questions}

\label{Sect6}
The set of virtual polytopes~\cite{VirtP} admits the structure of an algebra over the operad $\mathcal{SP}$. In that theory, the virtual polytope of an order polytope is the result of applying the operation that return the interior of the reflection around the origin to the polytope. In our interpretation, this assignment is an analog to the operadic version of Stanley reciprocity morphism on the Ehrhart series, which relates the Ehrhart series of a polytope, and the Ehrhart series of the interior of the polytope. Our definition of surjective weak maps suggests that there is a third geometric object missing in the theory of virtual polytopes, and in every place where order polynomials occur.

Given a poset $P$, in our study of order series we have information at two levels. One is the vector $\{d_{P,1},\cdots,d_{P,n}\}$, and second is the combinatorial meaning of the $n$-coefficient for $n\in\mathbb{N}$. They coincide because the action of the operad $\mathcal{FP}$ enumerates structures.
In the case of shuffle series we only proved that the action of the operad $\mathcal{SP}$ enumerates structures. In other words, we can define the action of the operad $\mathcal{FP}$ on shuffle series using the vectors $\{d_{P,1},\cdots,d_{P,n}\}$ but we were unable to prove that the resulting series counts shuffles. 
The first nontrivial case of a poset for which we do not have an explicit formula to compute its action on shuffle series is $N=\{x<y>z<w\}$. We do not know how to express this operation in terms of the Hadamard product and the Cauchy product of series. In Lemma 2.11 of \cite{APV} 
it is proved that the action on order series of the {4-ary} operation $N$ is not a lexicographic sum of the actions of $\{c<d\}$ and $\{c,d\}$ or any ternary operation. 

The vectors of $\zetafp[\{x<y>z<w\}]=5\zetafp[4]-5\zetafp[3]+\zetafp[2]$ should coincide with the vectors of $\mathcal{SH}(\{x<y>z<w\})$. Our experiments provide evidence of this assumption. For example, we obtain eight shuffles between the poset $N$ and the chain $\chain[1]$. In terms of the Hasse diagrams, there is a set of diagrams in which the points indexed by the second poset are both at the top, another diagram in which those points are at the bottom, and a diagram in which those points are in the middle; and the remaining five diagrams displayed below:

\begin{center}
\begin{tikzpicture}[scale=0.15]
\tikzstyle{every node}+=[inner sep=0pt]
\draw [black] (5,-12.8) circle (3);
\draw [black] (5,-12.8) circle (2.4);
\draw [black] (5,-3.6) circle (3);
\draw [black] (13.9,-13.7) circle (3);
\draw [black] (13.9,-3.4) circle (3);
\draw [black] (5,-22.4) circle (3);
\draw [black] (13.9,-22.4) circle (3);
\draw [black] (13.9,-22.4) circle (2.4);
\draw [black] (60.3,-23.8) circle (3);
\draw [black] (60.3,-13.7) circle (3);
\draw [black] (70.9,-13.7) circle (3);
\draw [black] (70.9,-13.7) circle (2.4);
\draw [black] (70.9,-23.8) circle (3);
\draw [black] (60.3,-3.4) circle (3);
\draw [black] (60.3,-3.4) circle (2.4);
\draw [black] (70.9,-3.4) circle (3);
\draw [black] (24.8,-3.4) circle (3);
\draw [black] (44.9,-2.8) circle (3);
\draw [black] (44.9,-2.8) circle (2.4);
\draw [black] (24.8,-23.8) circle (3);
\draw [black] (24.8,-23.8) circle (2.4);
\draw [black] (44.9,-23.8) circle (3);
\draw [black] (44.9,-13.7) circle (3);
\draw [black] (24.8,-13.7) circle (3);
\draw [black] (4.3,-31.8) circle (3);
\draw [black] (18.1,-31.8) circle (3);
\draw [black] (18.1,-31.8) circle (2.4);
\draw [black] (4.3,-41.1) circle (3);
\draw [black] (4.3,-41.1) circle (2.4);
\draw [black] (11.4,-41.1) circle (3);
\draw [black] (11.4,-41.1) circle (2.4);
\draw [black] (4.3,-53) circle (3);
\draw [black] (18.1,-41.1) circle (3);
\draw [black] (18.1,-53) circle (3);
\draw [black] (34.9,-13.7) circle (3);
\draw [black] (34.9,-13.7) circle (2.4);
\draw [black] (37.1,-53) circle (3);
\draw [black] (37.1,-53) circle (2.4);
\draw [black] (37.6,-41.7) circle (3);
\draw [black] (37.1,-31.7) circle (3);
\draw [black] (46.9,-41.7) circle (3);
\draw [black] (46.9,-41.7) circle (2.4);
\draw [black] (57.3,-53.4) circle (3);
\draw [black] (57.3,-42.3) circle (3);
\draw [black] (57.3,-42.3) circle (2.4);
\draw [black] (57.3,-31.7) circle (3);
\draw [black] (13.9,-10.7) -- (13.9,-6.4);
\fill [black] (13.9,-6.4) -- (13.4,-7.2) -- (14.4,-7.2);
\draw [black] (5,-19.4) -- (5,-15.8);
\fill [black] (5,-15.8) -- (4.5,-16.6) -- (5.5,-16.6);
\draw [black] (13.9,-19.4) -- (13.9,-16.7);
\fill [black] (13.9,-16.7) -- (13.4,-17.5) -- (14.4,-17.5);
\draw [black] (60.3,-20.8) -- (60.3,-16.7);
\fill [black] (60.3,-16.7) -- (59.8,-17.5) -- (60.8,-17.5);
\draw [black] (70.9,-20.8) -- (70.9,-16.7);
\fill [black] (70.9,-16.7) -- (70.4,-17.5) -- (71.4,-17.5);
\draw [black] (60.3,-10.7) -- (60.3,-6.4);
\fill [black] (60.3,-6.4) -- (59.8,-7.2) -- (60.8,-7.2);
\draw [black] (70.9,-10.7) -- (70.9,-6.4);
\fill [black] (70.9,-6.4) -- (70.4,-7.2) -- (71.4,-7.2);
\draw [black] (24.8,-10.7) -- (24.8,-6.4);
\fill [black] (24.8,-6.4) -- (24.3,-7.2) -- (25.3,-7.2);
\draw [black] (44.9,-20.8) -- (44.9,-16.7);
\fill [black] (44.9,-16.7) -- (44.4,-17.5) -- (45.4,-17.5);
\draw [black] (44.9,-10.7) -- (44.9,-5.8);
\fill [black] (44.9,-5.8) -- (44.4,-6.6) -- (45.4,-6.6);
\draw [black] (4.3,-50) -- (4.3,-44.1);
\fill [black] (4.3,-44.1) -- (3.8,-44.9) -- (4.8,-44.9);
\draw [black] (4.3,-38.1) -- (4.3,-34.8);
\fill [black] (4.3,-34.8) -- (3.8,-35.6) -- (4.8,-35.6);
\draw [black] (18.1,-38.1) -- (18.1,-34.8);
\fill [black] (18.1,-34.8) -- (17.6,-35.6) -- (18.6,-35.6);
\draw [black] (18.1,-50) -- (18.1,-44.1);
\fill [black] (18.1,-44.1) -- (17.6,-44.9) -- (18.6,-44.9);
\draw [black] (5,-9.8) -- (5,-6.6);
\fill [black] (5,-6.6) -- (4.5,-7.4) -- (5.5,-7.4);
\draw [black] (24.8,-20.8) -- (24.8,-16.7);
\fill [black] (24.8,-16.7) -- (24.3,-17.5) -- (25.3,-17.5);
\draw [black] (11.92,-11.45) -- (6.98,-5.85);
\fill [black] (6.98,-5.85) -- (7.14,-6.78) -- (7.89,-6.12);
\draw [black] (68.73,-21.73) -- (62.47,-15.77);
\fill [black] (62.47,-15.77) -- (62.71,-16.68) -- (63.4,-15.96);
\draw [black] (42.79,-21.67) -- (37.01,-15.83);
\fill [black] (37.01,-15.83) -- (37.22,-16.75) -- (37.93,-16.05);
\draw [black] (32.8,-11.56) -- (26.9,-5.54);
\fill [black] (26.9,-5.54) -- (27.1,-6.46) -- (27.82,-5.76);
\draw [black] (16.63,-50.39) -- (12.87,-43.71);
\fill [black] (12.87,-43.71) -- (12.83,-44.66) -- (13.7,-44.17);
\draw [black] (9.58,-38.72) -- (6.12,-34.18);
\fill [black] (6.12,-34.18) -- (6.21,-35.12) -- (7,-34.52);
\draw [black] (37.23,-50) -- (37.47,-44.7);
\fill [black] (37.47,-44.7) -- (36.93,-45.47) -- (37.93,-45.52);
\draw [black] (37.45,-38.7) -- (37.25,-34.7);
\fill [black] (37.25,-34.7) -- (36.79,-35.52) -- (37.79,-35.47);
\draw [black] (55.31,-51.16) -- (48.89,-43.94);
\fill [black] (48.89,-43.94) -- (49.05,-44.87) -- (49.8,-44.21);
\draw [black] (44.8,-39.56) -- (39.2,-33.84);
\fill [black] (39.2,-33.84) -- (39.4,-34.76) -- (40.12,-34.06);
\draw [black] (57.3,-50.4) -- (57.3,-45.3);
\fill [black] (57.3,-45.3) -- (56.8,-46.1) -- (57.8,-46.1);
\draw [black] (57.3,-39.3) -- (57.3,-34.7);
\fill [black] (57.3,-34.7) -- (56.8,-35.5) -- (57.8,-35.5);
\end{tikzpicture}
\end{center}

The assignment of SP-algebras $Posets\mapsto$ shuffle series is only surjective. In principle, this means that the action of the operad needs to take into account the label of the series. However, the action of $\widetilde{\{c,d\}}$ and $\widetilde{\{c<d\}}$ are independent of the poset labels, meaning that we can define those operations on any Maclaurin series with integer coefficients and constant-coefficient $1$. We wonder if this is true for every poset action. On this direction, consider a sequence  $P_i, Q_i$ so that $\mathcal{SH}(P_i)=\mathcal{SH}(Q_i)$. From~\cite[Theorem 4.6]{Doppelgangers2} and Theorem~\ref{main:thrm} we obtain the following property: for an arbitrary poset $R=(x_1,\cdots,x_n, \leq_R)$, $\widetilde{R}\left(\mathcal{SH}(P_1),\cdots,\mathcal{SH}(P_n)\right)=\widetilde{R}\left(\mathcal{SH}(Q_1),\cdots,\mathcal{SH}(Q_n)\right)$.

\section*{Acknowledgements}
The second author's research was supported by the National Research Foundation of Korea (NRF) grant funded by the Korean government (MSIT) (No. 2020R1C1C1A01008261) and (No. 2021R1A2B5B03087097). The Hasse diagrams were made using \url{http://madebyevan.com/fsm/}.

We thank the anonymous reviewers for their suggestions. We thank Antonio Arciniega-Nevárez and Anderson Vera for their feedback. We thank Ieke Moerdijk for his feedback on the first draft of this paper.

This work is dedicated to the memory of Professors Mykhaylo Shapiro Fishman, Valeri Kucherenko, and Steven Zelditch, whose teachings in various branches of analysis profoundly influenced the second author's understanding and approach to the subject.

\bibliographystyle{abbrv}
\bibliography{references}

\end{document}